\numberwithin{equation}{section}
\newtheorem{defi}{Definition}[section]
\newtheorem{rem}[defi]{Remark}
\theoremstyle{plain}
\newtheorem{tw}[defi]{Theorem}
\newtheorem{lem}[defi]{Lemma}
\newtheorem{cor}[defi]{Corollary}
\newtheorem{prop}[defi]{Proposition}
\newcommand\bb[1]{\mathbf{#1}}
\renewcommand{\r}{\mathbb{R}}
\newcommand{\n}{\mathbb{N}}
\newcommand{\rn}{\mathbb{R}^d}
\newcommand{\tn}{\mathbb{T}^d}
\newcommand{\io}{\int_{\tn}}
\newcommand{\htwo}{H^2(\tn)}
\renewcommand{\hom}{H^1(\tn)}
\newcommand{\ld}{L^2(\tn)}
\newcommand{\loc}{\textnormal{loc}}
\newcommand{\ul}{\frac{1}{2}}
\newcommand{\sul}{\sfrac{1}{2}}
\newcommand{\ddt}{\frac{d}{dt}}
\newcommand{\hp}{H^{\perp}}
\newcommand{\ant}{\quad\textrm{ and }\quad}
\renewcommand{\div}{\operatorname{div}}
\DeclareMathOperator{\curl}{curl}
\newcommand{\F}{\mathcal{F}}
\newcommand{\T}{\mathcal{T}}
\let \epsilon\varepsilon
\let \phi\varphi
\newcommand{\initcond}{ \|\nabla \bb{v}_0\|^2_{\ld}+\|\Delta \bb{u}_0\|^2_{\ld}+\left\|\frac{\nabla\theta_0}{\sqrt{\theta_0}}\right\|^2_{\ld}\leq D}
\newcommand{\M}{\mathcal{M}}
\begin{document}
	\date{}
	\title{\bf Existence, uniqueness, and long-time asymptotic behavior of regular solutions in multidimensional thermoelasticity}
	
	\author[1]{Piotr Micha{\l} Bies\footnote{piotr.bies@pw.edu.pl, corresponding author}} \author[2,1]{Tomasz Cie\'slak\footnote{cieslak@impan.pl}} \author[3]{Mario Fuest\footnote{fuest@ifam.uni-hannover.de}} \author[3]{Johannes Lankeit\footnote{lankeit@ifam.uni-hannover.de}} \author[4]{Boris Muha\footnote{borism@math.hr}} \author[5]{Srdan Trifunovi\'c\footnote{srdjan.trifunovic@dmi.uns.ac.rs}}
	\affil[1]{Faculty of Mathematics and Information Sciences, Warsaw University of Technology, Ul. Koszykowa 75, 00-662 Warsaw, Poland, (+48) 22 621-93-12}
	\affil[2]{Institute of Mathematics, Polish Academy of Sciences, 00-656 Warsaw, Poland}
	\affil[3]{Leibniz Universit\"at Hannover, Institut f\"ur Angewandte Mathematik, Welfengarten 1, 30167 Hannover, Germany}
	\affil[4]{Department of Mathematics, Faculty of Science, University of Zagreb, Zagreb, Croatia}
	\affil[5]{Department for Mathematics and Informatics, University of Novi Sad, Novi Sad, Serbia}
	
	\maketitle
	\begin{abstract}
		\noindent
		We study a simplified nonlinear thermoelasticity model on two- and three-dimensional tori. A novel functional involving the Fisher information associated with temperature is introduced, extending the previous one-dimensional approach from the first two authors (SIAM J.\ Math.\ Anal.\ \textbf{55} (2023), 7024--7038)) to higher dimensions. Using this functional, we prove global/local existence of unique regular solutions for small/large initial data. Furthermore, we analyze the asymptotic behavior as time approaches infinity and show that the temperature stabilizes to a constant state, while the displacement naturally decomposes into two distinct components: a divergence-free part oscillating indefinitely according to a homogeneous wave equation and a curl-free part converging to zero. Analogous results for the Lam\'e operator are also stated.
	\end{abstract}
	\textbf{Keywords and phrases:} {thermoelasticity, strong solutions, long-time dynamics, non-dissipative stability}
	\\${}$ \\
	\textbf{AMS Mathematical Subject classification (2020):} {74A15 (Primary), 74H40, 35M30  (Secondary)}

	\section{Introduction.}
	In this paper, we investigate the existence, uniqueness, and long-term asymptotic behavior of solutions to a simplified nonlinear thermoelasticity model in higher-dimensional settings. Indeed, the objects of our interest are displacement $\bb{u}:[0,T)\times \tn\rightarrow \r^d$, $d=2,3$ and positive temperature $\theta:[0,T)\times \tn\rightarrow \r$ satisfying the following system of thermoelasticity representing the balance of momentum and heat:
	\begin{equation}\label{new:system}
		\begin{cases}
			\bb{u}_{tt}-\Delta \bb{u} =   -\mu\nabla\theta ,& \quad \text{ in } (0,T)\times \tn,\\
			\theta_t - \Delta \theta =- \mu\theta \div\bb{u}_t,& \quad \text{ in }  (0,T)\times\tn,\\
			\bb{u}(0,\cdot)=\bb{u}_0,~ \bb{u}_t(0,\cdot) = \bb{v}_0,~ \theta(0,\cdot)=\theta_0>0.&
		\end{cases}
	\end{equation}
	where $\mu>0$ is the heat expansion constant and $\mathbb{T}^d$ is the d-dimensional torus. Here, other constants have been set to $1$ for simplicity. 
	We refer the reader to \cite{slemrod, CMT} (or to \cite{Racke_book}) concerning the physical justification of the model. Let us however briefly mention that \eqref{new:system} is the simplest possible nonlinearly coupled model of the thermoelasticity phenomenon which is consistent with the laws of thermodynamics, where the Helmholtz free energy is assumed of the
	form 
	\begin{eqnarray*}
		H(\theta, \nabla \bb{u})=\frac12|\nabla \bb{u}|^2+\theta -\mu\theta \div \bb{u}-\theta\log \theta.
	\end{eqnarray*}
	The reader is referred to the 1D version of the model in \cite{BC, BC2} which was studied by the first two authors. 
	\begin{rem}
		We restrict our analysis to the toroidal domain to isolate and clearly illustrate the essential mathematical difficulties arising from the nonlinear coupling, while avoiding technical complications related to boundary conditions. The analysis of more general bounded domains with appropriate physical boundary conditions remains an interesting and important direction for future work. \\
	\end{rem}

	Notice that we can formally multiply $\eqref{new:system}_1$ with $\bb{u}_t$, sum up with $\eqref{new:system}_2$ and then integrate over $(0,t)\times \mathbb{T}^d$ to obtain the total energy conservation:
	\begin{align}\label{balance}
		\ul\io \bb{u}_t^2(t)dx+\ul\io |\nabla \bb{u}|^2(t)dx+\io\theta(t) dx=\ul\io \bb{v}_0^2dx+\ul\io |\nabla \bb{u}_0|^2dx+\io\theta_0 dx.
	\end{align}
	Due to expected positivity of the temperature, we can study the evolution of the entropy as well. 
	Correspondingly, the entropy evolution equation arising from dividing the heat equation \eqref{new:system}$_2$ by temperature $\theta$ reads as follows:
	\begin{align*}
		(\log\theta+\mu\div \bb{u})_t -\Delta\log\theta =|\nabla\log\theta|^2.
	\end{align*}
	Here, $\log\theta+\mu\div \bb{u}$ represents the entropy, $-\nabla\log\theta$ the entropy flux and $|\nabla\log\theta|^2$ the entropy production. This identity can be integrated over $\tn$ to obtain
	\begin{align}\label{ogrzprop}
		\frac{d}{dt}\io \log\theta(t) dx =\io |\nabla\log\theta|^2 (t) dx  
	\end{align}
	which can be integrated over $(0,t)$ and then subtracted from the total energy balance $\eqref{balance}$ to obtain total dissipation balance:
	\begin{eqnarray*}
		&&\ul\io|\bb{u}_t|^2(t)dx+\ul\io |\nabla \bb{u}|^2(t)dx+\io(\theta-\log\theta)(t) dx + \int_0^t \io |\nabla\log\theta|^2 dx d\tau \nonumber\\
		&&=\ul\io \bb{v}_0^2dx+\ul\io |\nabla \bb{u}_0|^2dx+\io(\theta_0-\log\theta_0) dx. 
	\end{eqnarray*}
	\bigskip
	\begin{rem}
		Note that $\theta-\log\theta>0$, so in the case of global solutions this (in)equality gives control of $\nabla\log\theta$ in $ L^2((0,\infty)\times\mathbb{T}^d)$. It is important to point out that this bound does not come from dissipation. In fact, there is no dissipation in the system and the only stabilizing effect comes from the entropy production. Surprisingly so, this will be enough to ensure the decay of temperature to a steady state and consequently the decay of the curl-free part of displacement to zero.
	\end{rem}

	\bigskip

	\medskip
	The following functional will play a key role in the this paper
	$$
	\F(\bb{u},\theta):=\ul\left(\io |\nabla \bb{u}_t|^2dx+\io(\Delta \bb{u})^2dx+\io\frac{|\nabla\theta|^2}{\theta}dx\right).
	$$
	\medskip
	
	\noindent
	It turns out that it has the following attractor set property: there is a constant $D>0$ depending on $\mu,d$ and $\mathbb{T}^d$ such that 
	\[
	\F(\bb{u}(t), \theta(t)))\leq D \implies \F(\bb{u}(t+\tau), \theta(t+\tau)))\leq D \text{ for all } \tau>0.
	\]
	This allows us to establish the existence of global-in-time solutions to \eqref{new:system} for initial data satisfying  $\F(\bb{u}_0, \theta_0))\leq D$. For general (possibly large) initial data, we will show the existence of local-in-time solutions. Moreover, the obtained regularity will be sufficient to ensure the uniqueness of solutions. Finally, we analyze the long-time asymptotic dynamics of global solutions. The displacement naturally splits into two distinct components via the Helmholtz decomposition: a divergence-free part, which oscillates indefinitely according to the homogeneous wave equation, and a curl-free part, which converges to zero as time approaches infinity. This asymptotic behavior is fundamentally connected to the structure of the coupling in the heat equation, which depends exclusively on $\div\bb{u}$. Consequently, the divergence-free component is unaffected by thermal dissipation and retains its oscillatory nature. Interestingly, this phenomenon differs from the one-dimensional setting studied in \cite{BC2}, as higher dimensions provide sufficient spatial freedom to sustain permanent oscillations within the divergence-free component, while the curl-free component remains stabilized by thermal effects.

	\section{Main results}
	In what follows, we shall study the solutions in the following sense.
	
	\begin{defi}\label{soldef}
		We say that $(\bb{u},\theta)$ is a solution to \eqref{new:system} on $(0,T)$, where $T \in (0, \infty]$, for initial data
		\begin{align*}
			\bb{u}_0\in \hom,\quad \bb{v}_0\in \ld,\quad \theta_0\in H^1(\tn), \quad \log \theta_0 \in L^1(\tn), \quad \nabla\sqrt\theta_0 \in \ld
		\end{align*}
		if:
		\begin{itemize}
			\item The functions $\theta$ and $\bb{u}$ satisfy
			\begin{align*}
				\bb{u}&\in L_{\loc}^{\infty}([0,T);\htwo)\cap W_{\loc}^{1,\infty}([0,T);\hom)\cap W_{\loc}^{2,\infty}([0,T);\ld),\nonumber\\
				\theta&\in L_{\loc}^2([0,T);\htwo)\cap H_{\loc}^1([0,T); \ld), \nonumber
			\end{align*}
			and
			\begin{align*}
				&\Delta \bb{u}\in L^{\infty}(0,T;\ld), \quad\nabla\bb{u}_t\in L^{\infty}(0,T;\ld), \\
				&\sqrt{\theta}\in L^{\infty}(0,T;H^1(\mathbb{T}^d)), \quad 
				\log\theta \in L^\infty(0,T;L^1(\mathbb{T}^d)), \quad \nabla\log\theta\in L^{2}(0,T;\ld).
			\end{align*}
			\item The initial data are attained, i.e.,
			\begin{align*}
				(\bb{u}(0), \bb{u}_t(0), \theta(0)) = (\bb{u}_0, \bb{v}_0, \theta_0) \quad \text{holds in $\hom \times \ld \times \ld$}.
			\end{align*}
			
			\item The momentum equation and the heat equation
			\begin{align}
				\bb{u}_{tt}- \Delta \bb{u}&=-\mu\nabla\theta \label{mom:eq}, \\
				\theta_t-\Delta\theta&=-\mu\theta \div \bb{u}_{t}, \label{heat:eq}
			\end{align}
			are satisfied a.e.\ on $(0,T)\times \tn$.
		\end{itemize}
	\end{defi}
	
	\bigskip 
	Our first main result establishes the existence and uniqueness of solutions, distinguishing explicitly between global existence, which is valid for sufficiently small initial data measured by our new functional, and local existence, which holds for general initial data.
	
	\begin{tw}[Existence and uniqueness]\label{main1}
		Let $d \in \{2, 3\}$.
		\begin{enumerate}
			\item \textbf{Global solution}:
			There is $D > 0$ depending only on $\mu,d$ and $\mathbb{T}^d$ such that if the initial data satisfy
			\begin{align}\label{initval}
				\bb{u}_0\in \htwo,\quad \bb{v}_0\in\hom,\quad 0\le\theta_0\in\hom, \quad \log\theta_0 \in L^1(\mathbb{T}^d), \quad \nabla\sqrt{\theta_0} \in L^2(\mathbb{T}^d)
			\end{align}
			and
			\begin{eqnarray}\label{initcond}
				\initcond,
			\end{eqnarray}
			then there exists a unique solution $(\bb u, \theta)$ of \eqref{new:system} on $(0, \infty)$ in the sense of Definition $\ref{soldef}$.
			
			\item \textbf{Local solution}:
			If the initial data satisfy \eqref{initval} but not necessarily \eqref{initcond},
			then there exists $T \in (0, \infty)$ depending on $\mu,d$ and $\mathbb{T}^d$ and a unique solution $(\bb u, \theta)$ of \eqref{new:system} on $(0, T)$ in the sense of Definition $\ref{soldef}$.
		\end{enumerate}
	\end{tw}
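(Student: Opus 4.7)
The approach is to construct solutions via a standard approximation scheme (Galerkin in space, or a decoupling contraction argument) and to close the construction with the uniform a priori bounds encoded in the functional $\F$. The core is a differential inequality relating $\frac{d}{dt}\F(\bb{u}(t),\theta(t))$ to a non-negative dissipation (generated by the parabolic structure of \eqref{heat:eq}) and a multilinear remainder of higher order in $\F$, so that when $\F$ is small enough the dissipation dominates and $\{\F\le D\}$ becomes positively invariant. This single estimate yields both the attractor property announced before the theorem (hence part (i), once $D$ is suitably chosen) and a positive existence time $T$ depending only on the initial size of $\F$, hence on $\mu, d, \tn$, which yields part (ii).

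To derive it, first I would test \eqref{mom:eq} with $-\Delta\bb{u}_t$ to obtain
\begin{equation*}
\ul\frac{d}{dt}\bigl(\|\nabla\bb{u}_t\|_{\ld}^2+\|\Delta\bb{u}\|_{\ld}^2\bigr)=\mu\io \nabla\theta\cdot\Delta\bb{u}_t\,dx,
\end{equation*}
and then differentiate $\io|\nabla\theta|^2/\theta\,dx$ in time using \eqref{heat:eq}. Rewritten via $\log\theta$ or $\sqrt\theta$, the Fisher-information calculation produces a Bakry--Emery-type non-negative dissipation together with a coupling term which, after integration by parts, cancels up to lower-order commutators the cross term arising from the momentum test. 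I would estimate the surviving multilinear residuals using the Sobolev embedding $\htwo\hookrightarrow L^\infty(\tn)$ (valid in $d\in\{2,3\}$) and Gagliardo--Nirenberg interpolation, so that each factor is bounded by a positive power of $\F$ and can be absorbed into the dissipation once $\F$ is small.

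Once the $\F$-bound is available, the regularity listed in Definition \ref{soldef} is recovered by combining it with the energy balance \eqref{balance} and the entropy identity \eqref{ogrzprop}, which already control $\log\theta$ in $L^\infty(0,T;L^1(\tn))$ and $\nabla\log\theta$ in $L^2((0,T)\times\tn)$. Strict positivity of $\theta$, needed to make sense of $\io|\nabla\theta|^2/\theta\,dx$, is ensured by the parabolic maximum principle applied to \eqref{heat:eq} with $\mu\div\bb{u}_t$ treated as a bounded coefficient, a bound which the $\F$-estimate does supply via $\htwo\hookrightarrow L^\infty(\tn)$. Passage to the limit in the approximation is by compactness. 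Uniqueness in both (i) and (ii) is proved by deriving an energy estimate for the difference of two hypothetical solutions in the natural norm $\|\bb{u}_t\|_{\ld}^2+\|\nabla\bb{u}\|_{\ld}^2+\|\theta\|_{\ld}^2$ and applying Gr\"onwall's lemma, the $L^\infty_t$ control in Definition \ref{soldef} providing the coefficients needed to close the estimate.

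The main obstacle is the algebraic structure of the $\F$-inequality: the system carries no dissipation in the elastic variables, so the wave--heat cross-coupling must cancel to leading order, and only super-linear-in-$\F$ remainders may survive on the right-hand side. The Fisher information $\io|\nabla\theta|^2/\theta\,dx$ is the pivotal object that arranges this cancellation while simultaneously providing control of $\nabla\theta$ near vacuum in the $-\mu\nabla\theta$ forcing of the momentum equation. Getting the bilinear decomposition right---identifying which pieces cancel and which can be absorbed---is the central technical issue the full proof must address.
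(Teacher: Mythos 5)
Your overall strategy coincides with the paper's: the Fisher-information functional $\F$, the identity obtained by testing \eqref{mom:eq} with $-\Delta\bb{u}_t$ and differentiating $\io|\nabla\theta|^2/\theta\,dx$, the exact cancellation of the cross terms $\pm\mu\io\Delta\theta\,\div\bb{u}_t\,dx$ (via $\Delta\bb w=-\curl\curl\bb w+\nabla\div\bb w$), absorption of the surviving term $-\tfrac\mu2\io\frac{|\nabla\theta|^2}{\theta}\div\bb{u}_t\,dx$ into the Bakry--\'Emery dissipation by Gagliardo--Nirenberg, the resulting invariance of $\{\F\le D\}$ for part (i) and an ODE blow-up bound for part (ii), a (half-)Galerkin construction with compactness, and a Gr\"onwall argument for uniqueness. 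This is the right skeleton and matches the paper.

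There is, however, one step that would fail as written: your justification of the strict positivity of $\theta$. You propose to apply the parabolic maximum principle to \eqref{heat:eq} ``with $\mu\div\bb{u}_t$ treated as a bounded coefficient, a bound which the $\F$-estimate does supply via $\htwo\hookrightarrow L^\infty(\tn)$.'' The functional $\F$ controls $\|\nabla\bb{u}_t\|_{\ld}$ and $\|\Delta\bb{u}\|_{\ld}$, i.e.\ $\bb{u}_t\in L^\infty(0,T;\hom)$ and $\bb{u}\in L^\infty(0,T;\htwo)$; hence $\div\bb{u}_t$ is bounded only in $L^\infty(0,T;\ld)$ and $\div\bb{u}$ only in $L^\infty(0,T;\hom)$. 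Neither $H^1(\tn)\hookrightarrow L^\infty(\tn)$ nor anything stronger is available for $d\in\{2,3\}$, so the coefficient $\mu\div\bb{u}_t$ is \emph{not} bounded and the naive maximum principle does not close. The paper circumvents this in two ways: (a) for the existence proof, positivity is only needed at the level of the approximate problem, where $\bb{u}_{n,t}$ lives in the finite-dimensional space $V_n$ and is smooth, yielding a qualitative, $n$- and $T$-dependent lower bound $\hat\theta_n>0$; (b) the quantitative, uniform-in-time lower (and upper) bounds of Theorem~\ref{main2} are obtained by a Moser--Alikakos iteration applied to $z=-\log\theta$ (resp.\ $z=\theta$), which requires only $\|f\|_{L^\infty(0,\infty;\ld)}\le M$ for $f=\mu|\div\bb{u}_{n,t}|$ --- precisely the $L^\infty_tL^2_x$ control that $\F$ actually provides. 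Your proposal needs one of these substitutes; as stated, the positivity argument is the missing ingredient. A minor further point: in the uniqueness step the Gr\"onwall coefficient requires $\|\theta_1\|_{H^2(\tn)}^2\in L^1(0,T)$ (coming from $\theta\in L^2_{\loc}([0,T);\htwo)$ in Definition~\ref{soldef}), not merely the $L^\infty_t$ bounds, and the paper finds it convenient to run the estimate on $m=\div\bb{u}$ rather than on $\bb{u}$ itself to balance the regularity of the two equations.
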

	
	\bigskip
	
	The second result demonstrates that uniform bounds on the temperature, both from above and below, are preserved during the evolution:
	
	\begin{tw}[Upper and lower bounds of temperature]\label{main2}
		Let $d \in \{2, 3\}$ and $(\bb{u},\theta)$ be the global-in-time solution obtained in the first part of Theorem~\ref{main1}. 
		Given $\underline \theta, \overline \theta > 0$, there exist $m, M > 0$ (depending in particular on $\underline \theta$ and $\overline \theta$, respectively) such that
		\begin{eqnarray*}
			\theta_0\geq\underline{\theta} \quad \text{ a.e.\ on } \tn \quad &\implies \quad  \theta\geq m \quad \text{ a.e.\ on } (0,\infty)\times \mathbb{T}^d, \\
			\theta_0\leq\overline{\theta} \quad \text{ a.e.\ on } \tn \quad &\implies \quad  \theta\leq M \quad  \text{ a.e.\ on } (0,\infty)\times \mathbb{T}^d.
		\end{eqnarray*}
	\end{tw}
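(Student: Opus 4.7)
The plan is to establish both bounds by Moser-type iteration, using as crucial a priori input the uniform-in-time estimate $\F(\bb u(t),\theta(t))\le D$ from Theorem~\ref{main1}, which gives $\|\div\bb u_t(t)\|_{L^2(\tn)}\le\sqrt{2D}$ for every $t\ge 0$. Together with the energy identity \eqref{balance} and the embedding $\sqrt\theta\in L^\infty_tH^1_x\hookrightarrow L^\infty_tL^{6}_x$ in 3D (analogously in 2D), this also supplies a uniform-in-time bound $\theta\in L^\infty_tL^1_x\cap L^\infty_tL^3_x$.

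\textbf{Upper bound.} View the heat equation as the linear parabolic equation $\theta_t-\Delta\theta+\mu(\div\bb u_t)\theta=0$ with coefficient $\mu\div\bb u_t$ in $L^\infty((0,\infty);L^2(\tn))$. Testing with $\theta^{p-1}$ for $p\ge 2$, and estimating the right-hand side $-\mu\int_\tn\theta^p\div\bb u_t$ by H\"older, Sobolev ($H^1\hookrightarrow L^{2d/(d-2)}$ in 3D) and Young's inequality, I arrive at a differential inequality
\[
\frac{d}{dt}\|\theta(t)\|_{L^p}^p + c_p\|\nabla\theta^{p/2}(t)\|_{L^2}^2 \le C_p\|\theta(t)\|_{L^p}^p.
\]
A naïve Gr\"onwall would give only exponentially growing bounds; to obtain a \emph{time-uniform} $L^p$-bound I combine the dissipation with Sobolev $H^1\hookrightarrow L^{2d/(d-2)}$ and H\"older interpolation against the uniform $L^1$-bound on $\theta$, turning the dissipation term into a super-linear coercive term $\kappa\|\theta\|_{L^p}^{p\beta}$ with $\beta>1$. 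The resulting Bernoulli-type ODI is bounded uniformly in $t$. Iterating $p\to\infty$ in the standard Moser scheme, starting from the hypothesis $\theta_0\le\overline\theta\in L^\infty$, yields $\|\theta\|_{L^\infty((0,\infty)\times\tn)}\le M$.

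\textbf{Lower bound.} Set $\psi:=1/\theta$, which is well-defined since $\theta_0\ge\underline\theta>0$. A direct computation gives
\[
\psi_t-\Delta\psi+2\theta|\nabla\psi|^2=\mu\psi\div\bb u_t,
\]
and dropping the non-negative term $2\theta|\nabla\psi|^2$ leaves the parabolic inequality $\psi_t-\Delta\psi\le\mu\psi\div\bb u_t$ of the same form as the one for $\theta$, with coefficient $\mu\div\bb u_t$ still in $L^\infty_tL^2_x$ and initial data $\psi_0\le 1/\underline\theta\in L^\infty$. Substituting $\theta=1/\psi$ in the dropped term yields the identity $2\theta|\nabla\psi|^2=8|\nabla\sqrt\psi|^2$, so in fact the estimate for $\psi$ enjoys \emph{stronger} dissipation than that for $\theta$; keeping this extra term, the same Moser iteration as above applied to $\psi$ produces $\|\psi\|_{L^\infty((0,\infty)\times\tn)}\le 1/m$, i.e.\ $\theta\ge m>0$.

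\textbf{Main obstacle.} The principal difficulty is the \emph{time-uniformity} of the $L^\infty$-bounds, because a direct Gr\"onwall argument on the $L^p$-ODIs yields only exponentially growing constants. The remedy is the super-linear coercivity trick in the Moser scheme, which combines parabolic dissipation with Sobolev embedding and the uniform $L^1$-type a priori bounds coming from \eqref{balance} and $\F\le D$. This mechanism relies decisively on the uniform attractor property $\F(\bb u(t),\theta(t))\le D$ established in Theorem~\ref{main1}; without such global-in-time control of the coefficient $\mu\div\bb u_t$, no time-uniform $L^\infty$ bound on $\theta$ could be extracted from the heat equation alone.
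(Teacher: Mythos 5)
Your upper-bound argument is essentially the paper's: the paper proves a general Moser--Alikakos lemma (Lemma~\ref{moser}) for $z_t\le\Delta z+(z^++1)f$ with $f\in L^\infty(0,\infty;\ld)$ and $z^+\in L^\infty(0,\infty;L^1(\tn))$, and applies it to $z=\theta$ (at the level of the approximate solutions) with $f=\mu|\div\bb u_{t}|$, using exactly the two inputs you identify: the uniform $L^2$ bound on $\div\bb u_t$ coming from $\F\le D$ and the conserved $L^1$ norm of $\theta$ from \eqref{balance}. Your ``superlinear coercivity'' device for time-uniformity is a cosmetic variant of the paper's step, where Gagliardo--Nirenberg is applied a second time to convert the dissipation into a linear damping term $+C_1\|z^+\|_{L^{2^{n+1}}}^{2^{n+1}}$ on the left of the ODI; both work.

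The lower bound, however, has a genuine gap. Any Moser iteration of this type needs, besides the $L^\infty_tL^2_x$ control of the coefficient, a \emph{uniform-in-time} $L^{p_0}_x$ bound (for some finite $p_0$, in practice $p_0=1$) on the quantity being iterated, in order to start the scheme with time-independent constants --- this is precisely the hypothesis $\|z^+\|_{L^\infty(0,\infty;L^1(\tn))}\le M$ in Lemma~\ref{moser}, and it is where time-uniformity ultimately enters. For your choice $\psi=1/\theta$ no such bound is available: the energy identity controls $\int_{\tn}\theta$, and the total dissipation balance \eqref{taul2:smooth} controls $\int_{\tn}(\theta-\log\theta)$, hence $\|\log\theta\|_{L^\infty(0,\infty;L^1(\tn))}$, but neither controls $\int_{\tn}\theta^{-1}$, which can be arbitrarily large where $\theta$ is small even though $\int_{\tn}|\log\theta|$ stays bounded. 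Testing the $\psi$-equation with $1$ does not close either, since $\int_{\tn}\mu\psi\div\bb u_t$ requires $\|\psi\|_{L^2}$ (or an $L^\infty$ bound on $\div\bb u_t$, which is not available). This is exactly why the paper iterates on $z=-\log\theta$ rather than on $1/\theta$: one has $z_t=\Delta z-|\nabla z|^2+\mu\div\bb u_t$, so after dropping $-|\nabla z|^2$ the structure is the same as yours, but now the entropy estimate \eqref{uni:est4} supplies the missing $L^\infty(0,\infty;L^1(\tn))$ bound on $z^+$, and the conclusion $-\log\theta\le K$ converts into $\theta\ge e^{-K}$. Replacing your $\psi=1/\theta$ by $-\log\theta$ (and running the iteration on the smooth, strictly positive approximate solutions, as the paper does, rather than formally on the limit) repairs the argument.
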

	
	\bigskip
	
	An essential contribution of our analysis is the precise characterization of the long-term dynamics. In particular, we reveal how the interplay between mechanical displacement and thermal dissipation naturally leads to distinct behaviors: the divergence-free displacement indefinitely oscillates due to its wave nature, whereas the curl-free component, influenced by thermal diffusion, decays to zero as the temperature stabilizes uniformly. This decomposition of displacement into divergence-free and curl-free parts is accomplished via the Helmholtz decomposition, explicitly recalled in Section \ref{Sec:Helmholtz}.
	Let $\boldsymbol\chi$ be the curl-free part of $\bb u$, $\boldsymbol\chi_0$ be the curl-free of $\bb u_0$ and $\tilde{\boldsymbol\chi}_0$ be the curl-free part of $\bb v_0$.  
	\begin{tw}[Long-time dynamics]\label{main3}
		Let $d \in \{2, 3\}$ and assume that $(\bb u, \theta)$ is a global solution of \eqref{new:system} in the sense of Definition~\ref{soldef}
		starting from initial data $\bb u_0$, $\bb v_0$ and $\theta_0$, satisfying \eqref{initval}, \eqref{initcond} (for the $D$ given by Theorem~\ref{main1}) and $\theta_0^{-1} \in L^\infty(\tn)$.
		Then, the following
		convergences hold:
		\begin{alignat*}{2}
			\boldsymbol\chi(t,\cdot)&\to0 &&\quad\textrm{in }\hom \textrm{ as }t\to\infty,\\
			\boldsymbol\chi_t(t,\cdot)&\to 0 &&\quad\textrm{in }\ld \textrm{ as }t\to\infty,\\
			\theta(t,\cdot)&\to \theta_{\infty} &&\quad\textrm{in }\ld \textrm{ as }t\to\infty,
		\end{alignat*}
		where $\theta_{\infty}:=\left(\ul\io \tilde{\boldsymbol\chi}_0^2dx+\ul\io |\nabla\boldsymbol\chi_0|^2dx+\io\theta_0 dx\right)$.
	\end{tw}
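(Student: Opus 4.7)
I would isolate the curl-free/thermal subsystem via the Helmholtz decomposition, build a Lyapunov functional combining the mechanical energy of $\boldsymbol\chi$ with the Gibbs term $\io(\theta-\log\theta)\,dx$, and run a LaSalle-type invariance argument whose $\omega$-limit set is the single point $(0,0,\theta_\infty)$. Convergence of the whole trajectory then follows from compactness together with uniqueness of the $\omega$-limit.

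\textbf{Decoupling and conservation.} Writing $\bb{u}=\boldsymbol\psi+\boldsymbol\chi$, the divergence-free part $\boldsymbol\psi$ obeys the homogeneous wave equation (since the forcing $-\mu\nabla\theta$ is a pure gradient), which is why it persists as an undamped oscillation, while the curl-free part solves $\boldsymbol\chi_{tt}-\Delta\boldsymbol\chi=-\mu\nabla\theta$. Because $\div\bb{u}_t=\div\boldsymbol\chi_t$, the heat equation depends on $\bb{u}$ only through $\boldsymbol\chi$, so $(\boldsymbol\chi,\theta)$ forms a closed subsystem. Testing the $\boldsymbol\chi$-equation with $\boldsymbol\chi_t$, integrating by parts and inserting the identity $\mu\io\theta\,\div\boldsymbol\chi_t\,dx=-\tfrac{d}{dt}\io\theta\,dx$ (from integrating the heat equation over $\tn$) yields the conservation law
\begin{equation*}
\ul\|\boldsymbol\chi_t(t)\|_{\ld}^2+\ul\|\nabla\boldsymbol\chi(t)\|_{\ld}^2+\io\theta(t,x)\,dx=\theta_\infty\quad\text{for every }t\geq 0.
\end{equation*}

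\textbf{Lyapunov functional and LaSalle argument.} Combining this identity with the entropy equation \eqref{ogrzprop}, the functional
\begin{equation*}
G(t):=\ul\|\boldsymbol\chi_t\|_{\ld}^2+\ul\|\nabla\boldsymbol\chi\|_{\ld}^2+\io(\theta-\log\theta)\,dx
\end{equation*}
satisfies $G'(t)=-\|\nabla\log\theta\|_{\ld}^2\leq 0$; since $\theta-\log\theta\geq 1$ pointwise, $G$ is bounded below, so $G(t)\searrow G_\infty$ and $\nabla\log\theta\in L^2((0,\infty)\times\tn)$. The uniform bounds from the attractor property of $\F$, the lower bound $\theta\geq m>0$ delivered by Theorem~\ref{main2} (available because $\theta_0^{-1}\in L^\infty$), and Rellich's theorem let me extract, from any sequence $t_n\to\infty$, a subsequence along which $(\boldsymbol\chi(t_n),\boldsymbol\chi_t(t_n),\theta(t_n))$ converges strongly in $\hom\times\ld\times\ld$ to some limit $(\bar{\boldsymbol\chi},\bar{\bb v},\bar\theta)$. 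The continuous dependence inherited from the uniqueness part of Theorem~\ref{main1} makes the $\omega$-limit set invariant under the flow of the curl-free/thermal subsystem. On any $\omega$-orbit, $G\equiv G_\infty$, so $\nabla\log\theta\equiv 0$, hence $\bar\theta$ is spatially constant; integrating the heat equation over $\tn$ shows $\bar\theta$ is time-independent as well; the vanishing of $\div\bb{u}_t$ combined with the curl-free and zero-mean properties of $\boldsymbol\chi_t$ forces $\boldsymbol\chi_t\equiv 0$; the wave equation then reduces to $\Delta\boldsymbol\chi=0$, giving $\boldsymbol\chi\equiv 0$; and the conservation law pins the constant value $\bar\theta=\theta_\infty$. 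Thus $\omega=\{(0,0,\theta_\infty)\}$, and the three convergences claimed in the theorem follow from compactness plus uniqueness of the $\omega$-limit.

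\textbf{Main obstacle.} The delicate point is the invariance of the $\omega$-limit set: to conclude $\nabla\log\theta\equiv 0$ along the limiting orbit one must pass to the limit in the nonlinear term $\theta\,\div\bb{u}_t$ of the heat equation along the shifted sequence $(\boldsymbol\chi(\cdot+t_n),\theta(\cdot+t_n))$, which requires strong enough convergence of $\theta$ and compatible convergence of $\boldsymbol\chi_t$ so that the reconstructed limit is a bona-fide solution of the curl-free/thermal subsystem. This should follow from the same parabolic-hyperbolic energy estimates that underpin the uniqueness half of Theorem~\ref{main1}, but must be set up with some care so that $\omega$ genuinely sits inside the solution space of Definition~\ref{soldef}.
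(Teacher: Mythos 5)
Your proposal is correct and follows essentially the same route as the paper: Helmholtz decomposition into a closed curl-free/thermal subsystem, global bounds giving compactness of $(\boldsymbol\chi(t_n),\boldsymbol\chi_t(t_n),\theta(t_n))$, invariance of the $\omega$-limit set via the continuous-dependence estimate underlying uniqueness, and identification of the limit as the single constant state $(0,0,\theta_\infty)$ by forcing $\nabla\log\bar\theta\equiv0$ and then $\div\bar{\boldsymbol\chi}_t=0\Rightarrow\bar{\boldsymbol\chi}_t=0\Rightarrow\Delta\bar{\boldsymbol\chi}=0\Rightarrow\bar{\boldsymbol\chi}=0$. Your Lyapunov functional $G$ is just the total dissipation balance and, given conservation of the $\boldsymbol\chi$-energy, is equivalent to the paper's use of the monotone, bounded entropy $t\mapsto\io\log\theta\,dx$; you also correctly single out the invariance of the $\omega$-limit set as the delicate step, which the paper resolves exactly as you suggest (Proposition~\ref{twcontdep} and Corollary~\ref{corcontdep}).
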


	\begin{rem}
		It is worth noting that the divergence-free part of $\bb{u}$ does not interact with the curl-free part of $\bb u$ and $\theta$ so its evolution is completely determined by the wave equation. Therefore, it does not tend to a steady state and will continue to oscillate indefinitely (provided it is not zero initially).
	\end{rem}

	\subsection{Discussion of the literature}
	Thermoelasticity systems have long history, see for instance \cite{Racke_book}. The system we study, \eqref{new:system}, is a particular thermoelasticity system with the Helmholtz free energy given as 
	\begin{eqnarray*}
		H(\theta, \nabla \bb{u})=\theta\log \theta-\theta +\mu\theta \div \bb{u}+\tfrac12|\nabla \bb{u}|^2.
	\end{eqnarray*}
	The local-in-time unique solutions to the one-dimensional nonlinear thermoelastic system have been constructed by Slemrod in \cite{slemrod}. Next, the asymptotic behavior of solutions to the one-dimensional nonlinear thermoelastic system in the neighborhood of steady states has also been studied, see for instance \cite{slemrod, Racke_book, TC1}. Such results typically follow from energy estimates or fixed-point arguments for the coupled hyperbolic–parabolic system. Further refinements were provided by {\sc R. Racke} and collaborators, who established that in 1D, global solutions are exponentially stable when initial perturbations are small \cite{Racke_book}. Extending these results to higher dimensions proved challenging due to the presence of undamped rotational modes. In \cite{Racke_book}, global solutions in 2D and 3D under symmetry conditions eliminating rotational effects are obtained. A key challenge, however, is the question of \emph{global existence}: whether solutions remain well-defined for all time or develop singularities in finite time. It is well known that for sufficiently large initial data, smooth solutions to the highly nonlinear one-dimensional thermoelasticity system can experience finite-time blow-up due to nonlinear coupling effects, as shown by {\sc C. Dafermos} and {\sc L. Hsiao} \cite{DH}. {\sc W. Hrusa} and {\sc S. Messaoudi} extended this analysis to the physically more relevant case, proving that without smallness conditions, global smooth solutions generally do not exist \cite{HM}. Global weak solutions have been constructed in \cite{CMT} (see also a construction of solutions to the similar model in \cite{CGT}).  Finally, recently the global existence of unique solutions to the 1D case of \eqref{new:system} has been shown in \cite{BC}, introducing a novel functional related to the Fisher information to control the nonlinear coupling.     
	
	Another crucial aspect of thermoelastic systems is their \emph{asymptotic behavior}. Since thermal diffusion induces dissipation, one expects that perturbations of equilibrium states decay over time. In 1D, {\sc M. Slemrod} proved exponential decay for small solutions (see \cite{slemrod}). In \cite{BC2}, the full asymptotic behavior of the 1D minimal nonlinear thermoelasticity system, without restriction on initial data, is shown. However, the situation is more subtle in multiple dimensions due to the presence of undamped divergence-free modes. {\sc R. Racke} and others demonstrated that when the displacement’s \emph{curl} is initially zero, the solution exhibits full decay \cite{Racke_book}, but in general, divergence-free components persist as oscillations. This phenomenon is consistent with our findings: We rigorously show that in 2D and 3D, the displacement naturally splits into an oscillatory divergence-free part and a curl-free part that is dampened by thermal effects. See also \cite{Koch} and \cite{LebeauZuazua} for the convergence rates in the linear thermoelasticity.
	
	Beyond classical smooth solutions, researchers have also studied \emph{weak solutions} of thermoelastic systems. In three dimensions, \cite{CMT} constructed \emph{global weak solutions} via a half-Galerkin approximation and compactness arguments. This result ensures global solvability for large initial data but at the cost of lower regularity. A more generalized framework was considered by {\sc C. Christoforou, M. Galanopoulou}, and {\sc A. Tzavaras} \cite{CGT}, who studied \emph{measure-valued solutions} to capture fine-scale oscillations and singularities in thermoelastic systems. While such weak solutions are less regular, they provide global existence even in regimes where classical solutions may fail. A related study in \cite{TC1} analyzed a simplified one-dimensional thermoelasticity model, providing insights into the interplay between heat diffusion and elastic wave propagation in lower dimensions.
	
	Finally, for a comprehensive background on the mathematical theory of thermoelasticity, we refer the reader to the monograph by {\sc S. Jiang} and {\sc R. Racke} \cite{Racke_book}.
	
	\subsection{Plan of the paper}
	In Section \ref{druga}, in Lemma \ref{wonderfulformula} we present the new functional, which is a crucial point of our analysis. Next, after recalling some inequalities, in Lemma \ref{glestlem} we give an inequality, which plays an essential role in the proofs of our results. The section thereafter contains the construction of solutions, in particular a torus version of the half-Galerkin procedure borrowed from \cite{CMT}. Uniqueness of solutions is also shown. Section~\ref{sec:upper_lower_bdd} then shows time-independent $L^\infty$ upper and lower estimates of the temperature by a Moser-type iteration. In Section \ref{czwarta}, the asymptotics of solutions is studied. Next, the large-time behaviors of both displacement and temperature are given. In particular, in the latter the Helmholtz decomposition (see Theorem \ref{thdecomp}) and the second principle of thermodynamics \eqref{ogrzprop} are used. In Section~\ref{lamsec}, we state analogous results for the Lam\'e operator instead of the Laplacian in the wave equation. We also comment on the modifications of proofs required by the Lam\'e operator. 
	
	\section{A priori estimates}\label{druga}
	The first step towards constructing solutions consists of obtaining suitable a priori estimates. These will initially be derived under the assumption that solutions are sufficiently regular. Later, in sections dealing explicitly with the construction of solutions, we will explain how these a priori estimates can be rigorously justified within our procedures.

	\subsection{Energy and total dissipation}
	\begin{lem}\label{uni:est}
		Let $(\bb{u}, \theta)$ with $\theta > 0$ be a smooth solution of \eqref{new:system}. Then, we have the following total energy balance:
		\begin{align}\label{balance:smooth}
			\ul\io \bb{u}_t^2(t)dx+\ul\io |\nabla \bb{u}|^2(t)dx+\io\theta(t) dx=\ul\io \bb{v}_0^2dx+\ul\io |\nabla \bb{u}_0|^2dx+\io\theta_0 dx.
		\end{align}
		Moreover, the entropy equation holds:
		\begin{align}\label{eq:tau:smooth}
			(\log\theta+\mu\div \bb{u})_t -\Delta\log\theta =|\nabla\log\theta|^2,
		\end{align}
		and the total dissipation balance is satisfied
		\begin{eqnarray}
			&&\ul\io|\bb{u}_t|^2(t)dx+\ul\io |\nabla \bb{u}|^2(t)dx+\io(\theta-\log\theta)(t) dx + \int_0^t \io |\nabla\log\theta|^2 dx d\tau \nonumber\\
			&&=\ul\io \bb{v}_0^2dx+\ul\io |\nabla \bb{u}_0|^2dx+\io(\theta_0-\log\theta_0) dx. \label{taul2:smooth}
		\end{eqnarray}
	\end{lem}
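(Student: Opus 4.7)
The plan is to derive the three identities by standard energy-type manipulations, exploiting smoothness of $(\bb u,\theta)$ and $\theta>0$, together with the fact that on the torus there are no boundary terms when integrating by parts.

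First, for the total energy balance \eqref{balance:smooth}, I would test the momentum equation \eqref{mom:eq} with $\bb u_t$ and integrate over $\tn$. Integration by parts on the $-\Delta\bb u$ term produces $\tfrac12\ddt\io|\nabla\bb u|^2$, and on the right-hand side $-\mu\io\nabla\theta\cdot\bb u_t\,dx=\mu\io\theta\div\bb u_t\,dx$. This yields
\begin{equation*}
\tfrac12\ddt\io\bb u_t^2\,dx+\tfrac12\ddt\io|\nabla\bb u|^2\,dx=\mu\io\theta\div\bb u_t\,dx.
\end{equation*}
Integrating the heat equation \eqref{heat:eq} over $\tn$ kills the $\Delta\theta$ term and yields $\ddt\io\theta\,dx=-\mu\io\theta\div\bb u_t\,dx$. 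Adding the two identities cancels the coupling term; integration in time from $0$ to $t$ then gives \eqref{balance:smooth}.

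Next, for the entropy identity \eqref{eq:tau:smooth}, I would divide \eqref{heat:eq} by $\theta$ (legitimate since $\theta>0$) to obtain $\tfrac{\theta_t}{\theta}-\tfrac{\Delta\theta}{\theta}=-\mu\div\bb u_t$. Using $(\log\theta)_t=\theta_t/\theta$ and the pointwise identity $\Delta\log\theta=\Delta\theta/\theta-|\nabla\theta|^2/\theta^2=\Delta\theta/\theta-|\nabla\log\theta|^2$, this rearranges to
\begin{equation*}
(\log\theta)_t+\mu\div\bb u_t-\Delta\log\theta=|\nabla\log\theta|^2,
\end{equation*}
which is precisely \eqref{eq:tau:smooth}.

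Finally, for the dissipation balance \eqref{taul2:smooth}, I would integrate \eqref{eq:tau:smooth} over $\tn$; the $\Delta\log\theta$ term vanishes by periodicity and the $\mu\div\bb u_t$ term vanishes after integration as $\io\div\bb u_t\,dx=0$. This yields $\ddt\io\log\theta\,dx=\io|\nabla\log\theta|^2\,dx$, i.e.\ \eqref{ogrzprop}. Integrating in time from $0$ to $t$ and subtracting the result from \eqref{balance:smooth} produces \eqref{taul2:smooth}.

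No step is really an obstacle here since the lemma is an a~priori calculation for smooth, strictly positive solutions: the only delicate point is that the division by $\theta$ requires $\theta>0$, which is part of the hypothesis, and that integrations by parts produce no boundary terms, which is automatic on $\tn$. The genuine work — justifying these identities for the constructed solutions of limited regularity — is deferred to the approximation scheme later in the paper.
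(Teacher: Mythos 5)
Your proposal is correct and follows exactly the paper's argument: testing the momentum equation with $\bb u_t$ and adding the spatially integrated heat equation for \eqref{balance:smooth}, dividing the heat equation by $\theta$ and using $\Delta\log\theta=\Delta\theta/\theta-|\nabla\log\theta|^2$ for \eqref{eq:tau:smooth}, and integrating the entropy identity over $\tn$ and in time before subtracting from the energy balance for \eqref{taul2:smooth}. You merely spell out the integrations by parts that the paper leaves implicit.
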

	\begin{proof}
		The total energy balance $\eqref{balance:smooth}$ is obtained by multiplying the momentum equation $\eqref{mom:eq}$ with $\bb{u}_t$ and integrating over $(0,t)\times \mathbb{T}^d$ and summing up with the heat equation $\eqref{heat:eq}$ integrated over $(0,t)\times \mathbb{T}^d$. The entropy equation $\eqref{eq:tau:smooth}$ is obtained by dividing $\eqref{heat:eq}$ by $\theta$, which is integrated over $(0,t)\times \mathbb{T}^d$ and subtracted from $\eqref{balance:smooth}$ to obtain $\eqref{taul2:smooth}$.
	\end{proof}

	\subsection{Estimates of functional \texorpdfstring{$\mathcal{F}$}{F}}
	The following lemma states the main new estimate of the paper. It is based on Fisher information and is a higher-dimensional version of the functional in \cite{BC} which in our case takes the form
	\begin{align*}
		\F(\bb{u},\theta)=\ul\left(\io |\nabla \bb{u}_t|^2dx+\io(\Delta \bb{u})^2dx+\io\frac{|\nabla\theta|^2}{\theta}dx\right).
	\end{align*}
	It is a refined version of the formal estimate in \cite[Proposition 6.1]{BC}.

	\begin{lem}\label{wonderfulformula}
		Let $(\bb{u}, \theta)$ with $\theta > 0$ be a smooth solution of \eqref{new:system}. 
		Then, the equality
		\begin{align*}
			\frac{d}{dt}\F(\bb{u},\theta)=
			-\io\theta|\nabla^2\log\theta|^2dx-\frac\mu2\io\frac{|\nabla\theta|^2}\theta\div \bb{u}_tdx
		\end{align*}
		holds.
	\end{lem}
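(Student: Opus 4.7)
The plan is to differentiate each of the three summands of $\F$ separately using the momentum equation \eqref{mom:eq} and the heat equation \eqref{heat:eq}, and then to reorganize so that the cross terms between the wave and heat contributions collapse onto the single coupling term $-\frac{\mu}{2}\io \frac{|\nabla\theta|^2}{\theta}\div\bb{u}_t dx$, leaving a purely diffusive remainder that matches $-\io\theta|\nabla^2\log\theta|^2 dx$ via a weighted Bochner identity.

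First I would handle the two mechanical terms together. Differentiating $\tfrac{1}{2}\io|\nabla\bb{u}_t|^2 dx$ and inserting $\bb{u}_{tt}=\Delta\bb{u}-\mu\nabla\theta$ after an integration by parts gives $-\io \Delta\bb{u}_t\cdot\Delta\bb{u} dx + \mu\io \Delta\bb{u}_t\cdot\nabla\theta dx$, and the first piece exactly cancels $\frac{d}{dt}\tfrac{1}{2}\io(\Delta\bb{u})^2 dx$. A double integration by parts (equivalently, writing $\Delta=\nabla\div-\curl\curl$ and noting that $\io \curl\curl\bb{u}_t\cdot\nabla\theta dx=0$) turns the remaining piece into $-\mu\io\div\bb{u}_t\,\Delta\theta dx$, which is the full wave contribution.

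For the Fisher-information term I would start from $\frac{d}{dt}\,\tfrac{1}{2}\io\frac{|\nabla\theta|^2}{\theta}dx = \io\nabla\log\theta\cdot\nabla\theta_t dx - \tfrac{1}{2}\io\frac{|\nabla\theta|^2}{\theta^2}\theta_t dx$, and, after moving one derivative off $\theta_t$ in the first integral, condense this to $-\io\bigl(\Delta\log\theta+\tfrac{1}{2}|\nabla\log\theta|^2\bigr)\theta_t dx$. Substituting $\theta_t=\Delta\theta-\mu\theta\div\bb{u}_t$ and using the algebraic identity $\theta\bigl(\Delta\log\theta+\tfrac{1}{2}|\nabla\log\theta|^2\bigr)=\Delta\theta-\tfrac{1}{2}\frac{|\nabla\theta|^2}{\theta}$, the $\mu$-piece equals $\mu\io\div\bb{u}_t\,\Delta\theta dx-\tfrac{\mu}{2}\io\frac{|\nabla\theta|^2}{\theta}\div\bb{u}_t dx$. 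Adding to the wave contribution, the two $\mu\io\div\bb{u}_t\,\Delta\theta$ terms annihilate, so I am left with $-\io\bigl(\Delta\log\theta+\tfrac{1}{2}|\nabla\log\theta|^2\bigr)\Delta\theta dx-\tfrac{\mu}{2}\io\frac{|\nabla\theta|^2}{\theta}\div\bb{u}_t dx$.

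The only remaining step, and the one I expect to be the main obstacle, is the algebraic identification of the first summand with $-\io\theta|\nabla^2\log\theta|^2 dx$. The cleanest route is a weighted Bochner identity: apply $\tfrac{1}{2}\Delta|\nabla\phi|^2=|\nabla^2\phi|^2+\nabla\phi\cdot\nabla\Delta\phi$ to $\phi=\log\theta$, multiply by $\theta$, integrate over $\tn$, and integrate by parts twice using $\nabla\theta=\theta\nabla\log\theta$ and $\Delta\theta=\theta(\Delta\log\theta+|\nabla\log\theta|^2)$; this produces $\io\theta|\nabla^2\log\theta|^2 dx=\io\theta\bigl[(\Delta\log\theta)^2+\tfrac{3}{2}\Delta\log\theta\,|\nabla\log\theta|^2+\tfrac{1}{2}|\nabla\log\theta|^4\bigr]dx$. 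Expanding $\bigl(\Delta\log\theta+\tfrac{1}{2}|\nabla\log\theta|^2\bigr)\Delta\theta$ with the same product formula for $\Delta\theta$ yields exactly the right-hand side, so the diffusive remainder is $-\io\theta|\nabla^2\log\theta|^2 dx$, as claimed. The delicate points here are choosing the order of integrations by parts so that the boundaryless torus absorbs all extra terms, and tracking coefficients through the expansion; once these are in place, the lemma follows. All manipulations are justified by the assumed smoothness and positivity of $(\bb{u},\theta)$.
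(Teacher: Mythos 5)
Your proposal is correct; I checked the key algebraic identity and it holds. The treatment of the mechanical terms is identical to the paper's (differentiate, cancel $\io\Delta\bb u_t\cdot\Delta\bb u\,dx$ against $\frac{d}{dt}\frac12\io(\Delta\bb u)^2dx$, and convert $\mu\io\Delta\bb u_t\cdot\nabla\theta\,dx$ into $-\mu\io\Delta\theta\,\div\bb u_t\,dx$ via $\Delta=\nabla\div-\curl\curl$), as is the cancellation of the cross term $\pm\mu\io\Delta\theta\,\div\bb u_t\,dx$. Where you genuinely diverge is the Fisher-information term: the paper substitutes $v=\theta^{\sul}$, tests the equation for $v_t$ with $-4\Delta v$, and recognizes the dissipative part as the integral of the \emph{pointwise} complete square $\sum_{k,l}\bigl(v_{x_kx_l}-v_{x_k}v_{x_l}/v\bigr)^2=\tfrac14\,\theta|\nabla^2\log\theta|^2$, whereas you work in the entropy variable $\phi=\log\theta$, reduce the derivative to $-\io(\Delta\phi+\tfrac12|\nabla\phi|^2)\theta_t\,dx$, and identify the remainder $\io(\Delta\phi+\tfrac12|\nabla\phi|^2)\Delta\theta\,dx$ with $\io\theta|\nabla^2\phi|^2\,dx$ through a weighted Bochner identity (I verified: $\io\theta\bigl(|\nabla^2\phi|^2-(\Delta\phi)^2\bigr)dx=\tfrac32\io\theta\Delta\phi|\nabla\phi|^2dx+\tfrac12\io\theta|\nabla\phi|^4dx$ for $\theta=e^\phi$ on the torus, which is exactly what you need). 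The two routes are equivalent in content and both rely only on boundaryless integration by parts; the paper's $\theta^{\sul}$ formulation has the advantage that nonnegativity of the dissipation is visible pointwise before integration, while your $\log\theta$ formulation makes the link to the entropy structure $-\io(\Delta\log\theta+\tfrac12|\nabla\log\theta|^2)\theta_t\,dx$ more transparent. Since the paper later invokes Lemma \ref{theCin} in terms of $\nabla^2\theta^{\sul}$, the $\theta^{\sul}$ bookkeeping is also slightly more convenient downstream, but your argument proves the lemma as stated.
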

	\begin{proof}
		Dividing the heat equation $\eqref{new:system}_2$ by $2\theta^{\sul}$ yields
		\begin{align*}
			(\theta^{\sul})_t=\Delta\theta^{\sul}+\frac14\frac{|\nabla\theta|^2}{\theta^{\frac32}}-\frac{\mu}2\theta^{\sul}\div \bb{u}_t.
		\end{align*}
		We can now multiply this identity with $-4\Delta \theta^{\sul}$ and integrate over $\mathbb{T}^d$ to obtain
		\begin{align}\label{surow1}
			\ul&\ddt\io\frac{|\nabla\theta|^2}{\theta}dx\nonumber\\
			&=-4\io|\Delta \theta^{\sul}|^2dx-\io\Delta\theta^{\sul}\frac{|\nabla\theta|^2}{\theta^{\frac32}}dx+2\mu\io\Delta\theta^{\sul}(\theta^{\sul}\div \bb{u}_t)dx \eqqcolon I_1+I_2+I_3
		\end{align}
		We calculate
		\begin{align}
			I_2&=\io\nabla\theta^{\sul}\cdot\nabla\frac{|\nabla\theta|^2}{\theta^{\frac32}}dx=4\io\nabla\theta^{\sul}\cdot\nabla\frac{|\nabla\theta^{\sul}|^2}{\theta^{\sul}}dx \nonumber\\
			&=8\io\frac{\nabla\theta^{\sul} \cdot \nabla^2\theta^{\sul}\cdot(\nabla\theta^{\sul})^T}{\theta^{\sul}}dx-4\io\frac{|\nabla\theta^{\sul}|^4}{\theta}dx\label{surow3}
		\end{align}
		and
		\begin{align}\label{surow4}
			I_3=\mu\io\Delta\theta\div \bb{u}_tdx-\frac{\mu}2\io\frac{|\nabla\theta|^2}{\theta}\div \bb{u}_t dx.
		\end{align}
		Let us note that by \eqref{surow1} and \eqref{surow3},
		\begin{align}
			I_1+I_2=-4\io\sum_{k,l=1}^n\left((\theta^{\sul})_{x_kx_l}-\frac{(\theta^{\sul})_{x_k}(\theta^{\sul})_{x_l}}{\theta^{\sul}}\right)^2dx=-4\io\theta|\nabla^2\log\theta^{\ul}|^2dx. \label{surow5}
		\end{align}
		Plugging \eqref{surow4} and \eqref{surow5} into \eqref{surow1}, we get
		\begin{align}\label{wfe}
			\ul\ddt\io\frac{|\nabla\theta|^2}{\theta}dx=-4\io\theta|\nabla^2\log\theta^{\sul}|^2dx-\frac\mu2\io\frac{|\nabla\theta|^2}\theta\div \bb{u}_tdx+\mu\io\Delta\theta\div \bb{u}_tdx.
		\end{align}
		Next, multiplying the wave equation in \eqref{new:system} by $-\Delta \bb{u}_t$, integrating over $\tn$ and integrating by parts gives us
		\begin{align}\label{wfu1}
			\ul\ddt\left(\io|\nabla \bb{u}_t|^2dx+\io(\Delta \bb{u})^2dx\right)=\mu\io\nabla\theta\cdot\Delta \bb{u}_t dx.
		\end{align}
		Let us recall that for a vector field $w\colon\tn\to\rn$ (with $d \in \{2, 3\}$) we have
		\begin{align*}
			\Delta \bb w=-\curl\curl \bb w+\nabla\div \bb w.
		\end{align*}
		Hence, applying it to \eqref{wfu1} we obtain
		\begin{align*}
			\ul\ddt\left(\io|\nabla \bb{u}_t|^2dx+\io(\Delta \bb{u})^2dx\right)&=-\underbrace{\mu\io\nabla\theta\cdot\curl\curl \bb{u}_tdx}_{=0}+\mu\io\nabla\theta\cdot\nabla\div \bb{u}_t dx\\
			&= - \mu\io\Delta\theta \div \bb{u}_t dx,
		\end{align*}
		by integration by parts. Plugging this identity into \eqref{wfe} finishes the proof.
	\end{proof}
	Note that the choice of the functional $\F(\bb{u},\theta)$ is motivated by the need to control both displacement and temperature gradients simultaneously, using Fisher information, which naturally captures the smoothing properties of the heat equation. Its monotonicity-like properties in Lemma $\ref{glestlem}$ will enable us to obtain estimates essential for global-in-time existence. First let us show the following inequality, which is the torus version of \cite[Lemma~A.1]{Fuestin}.
	\begin{lem}\label{inpion}
		There is $C > 0$ (depending only on $d$) such that the inequality
		\begin{align*}
			\|\nabla v\|_{\ld}\leq C\|\Delta v\|_{\ld}
		\end{align*}
		is satisfied for every $v\in \htwo$.
	\end{lem}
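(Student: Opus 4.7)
The plan is to reduce to a standard Poincaré-type estimate on $\tn$. The key observation is that both sides of the desired inequality depend only on the nonconstant part of $v$: writing $\bar v = \frac{1}{|\tn|}\io v\,dx$ and $w = v - \bar v$, we have $\nabla v = \nabla w$ and $\Delta v = \Delta w$, while $w$ has zero mean. So it suffices to prove the inequality under the additional assumption $\io v\,dx = 0$.

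Under that assumption, I would integrate by parts (there are no boundary terms on the torus) to obtain
\begin{equation*}
\io |\nabla v|^2\,dx = -\io v\,\Delta v\,dx \leq \|v\|_{\ld}\,\|\Delta v\|_{\ld},
\end{equation*}
and then invoke the Poincaré inequality on $\tn$ for mean-zero functions, $\|v\|_{\ld}\leq C_P\|\nabla v\|_{\ld}$, which yields
\begin{equation*}
\|\nabla v\|_{\ld}^2 \leq C_P\,\|\nabla v\|_{\ld}\,\|\Delta v\|_{\ld},
\end{equation*}
from which the claim follows after dividing by $\|\nabla v\|_{\ld}$ (the case $\nabla v\equiv 0$ being trivial). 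The constant $C$ depends only on the Poincaré constant of $\tn$ and thus only on $d$ (with the torus fixed).

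As an alternative, one could give a one-line Fourier-series argument: expanding $v = \sum_{k\in\mathbb{Z}^d} \hat v(k) e^{2\pi i k\cdot x}$ and using Parseval, the inequality reduces to $4\pi^2 |k|^2 \leq C^2 \cdot 16\pi^4 |k|^4$ for all $k\neq 0$, which holds (with $C = (2\pi)^{-1}$) since $|k|^2 \geq 1$ for nonzero integer vectors; the $k=0$ mode contributes zero to both sides. I do not anticipate any real obstacle here, as the inequality is a textbook consequence of the Poincaré inequality on the torus; the only point worth flagging is the reduction to mean-zero functions, without which one would have to worry about constant modes, and the fact that the torus case eliminates the usual boundary-term issues that arise when proving such estimates on bounded domains with, say, Neumann boundary conditions.
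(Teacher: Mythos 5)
Your argument is correct and is essentially the same as the paper's: subtract the mean, integrate by parts on the torus, apply Cauchy--Schwarz and the Poincar\'e inequality for mean-zero functions, and divide by $\|\nabla v\|_{\ld}$. The Fourier-series alternative you mention is also fine but is not needed.
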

	\begin{proof}
		Let us denote $\langle v\rangle=\io v dx$. We calculate
		\begin{align*}
			\io|\nabla v|^2 dx=-\io (v-\langle v\rangle)\Delta vdx.
		\end{align*}
		Here, we use the Schwarz and the Poincar\'e inequality. We get
		\begin{align*}
			\|\nabla v\|^2_{\ld}\leq\|v-\langle v\rangle\|_{\ld}\|\Delta v\|_{\ld}\leq C\|\nabla v\|_{\ld}\|\Delta v\|_{\ld}.
		\end{align*}
		We divide the above inequality by $\|\nabla v\|_{\ld}$ and get the claim.
	\end{proof}
	
	Similarly, the below inequality is the torus version of \cite[Lemma~A.1]{CFHS}
	and can be obtained in the same way; note that in the toroidal settings no boundary terms emerge.
	\begin{lem}\label{theCin}
		Let $w\in C^2(\tn)$ be a positive function, then we have
		\begin{align*}
			\io |\nabla^2w^{\sul}|^2dx\leq C\io w|\nabla^2\log w|^2dx,
		\end{align*}
		where $C=1+\frac{\sqrt d}2+\frac d8$.
	\end{lem}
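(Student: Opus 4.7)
My plan is to reduce the inequality to a routine chain-rule identity combined with an integration by parts that is boundary-term-free on $\tn$; the only analytic content is then a Cauchy--Schwarz manipulation to control the resulting quartic term in $\nabla\log w$.

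First I would express $\nabla^2 w^{\sul}$ via the chain rule. Writing $w^{\sul} = \exp(\ul \log w)$ and differentiating twice yields
\[
\partial_i\partial_j w^{\sul} \;=\; \tfrac{1}{2}\,w^{\sul}\,\partial_i\partial_j\log w \;+\; \tfrac{1}{4}\,w^{\sul}(\partial_i\log w)(\partial_j\log w).
\]
The elementary inequality $(a+b)^2\leq 2a^2+2b^2$ applied componentwise and summed over $i,j$ then gives the pointwise estimate
\[
|\nabla^2 w^{\sul}|^2 \;\leq\; \tfrac{w}{2}\,|\nabla^2\log w|^2 \;+\; \tfrac{w}{8}\,|\nabla\log w|^4.
\]
Setting $I_1 := \io w|\nabla\log w|^4\,dx$ and $I_3 := \io w|\nabla^2\log w|^2\,dx$, all that remains is to control $I_1$ by $I_3$.

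For this, I would use the identity $w\nabla\log w = \nabla w$ and integrate by parts on $\tn$, which is clean since the torus has no boundary:
\[
I_1 \;=\; \io (\nabla w)\cdot\bigl(|\nabla\log w|^2\,\nabla\log w\bigr)\,dx \;=\; -\io w\,\div\!\bigl(|\nabla\log w|^2\,\nabla\log w\bigr)\,dx.
\]
Expanding the divergence produces the two contributions $2(\nabla\log w)^{T}(\nabla^2\log w)(\nabla\log w)$ and $|\nabla\log w|^2\Delta\log w$. Applying Cauchy--Schwarz to each, together with the trivial trace bound $(\Delta\log w)^2 \leq d\,|\nabla^2\log w|^2$, I would estimate the two resulting integrals by $\sqrt{I_1 I_3}$ and $\sqrt{d\,I_1 I_3}$ respectively. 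This produces
\[
I_1 \;\leq\; 2\sqrt{I_1 I_3} + \sqrt{d\,I_1 I_3} \;=\; (2+\sqrt d)\sqrt{I_1 I_3},
\]
and hence $I_1 \leq (2+\sqrt d)^2 I_3$ (the case $I_1=0$ being trivial).

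Combining the pointwise estimate with this IBP bound gives
\[
\io |\nabla^2 w^{\sul}|^2\,dx \;\leq\; \tfrac{1}{8}I_1 + \tfrac{1}{2} I_3 \;\leq\; \left(\tfrac{(2+\sqrt d)^2}{8}+\tfrac{1}{2}\right) I_3 \;=\; \left(1 + \tfrac{\sqrt d}{2} + \tfrac{d}{8}\right) I_3,
\]
which is precisely the claimed constant $C$. The step I expect to require the most care is identifying both contributions produced by $\div(|\nabla\log w|^2\nabla\log w)$ and checking that Cauchy--Schwarz on each produces exactly the coefficients $2$ and $\sqrt d$ that combine into $(2+\sqrt d)^2$; the rest is algebraic bookkeeping that is made clean by the absence of boundary terms on $\tn$.
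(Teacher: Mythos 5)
Your proof is correct and is essentially the argument the paper relies on: the lemma is stated here without proof by reference to \cite[Lemma~A.1]{CFHS}, and your chain-rule decomposition of $\nabla^2 w^{\sul}$, boundary-term-free integration by parts of the quartic term, and the Cauchy--Schwarz/trace bounds reproduce that proof, as confirmed by the fact that you recover the exact constant $1+\tfrac{\sqrt d}{2}+\tfrac d8$.
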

	
	In the remainder of the section, we derive estimates for $\F(\bb u, \theta)$ both for small and general initial data.
	The following lemma will be used for both cases.
	\begin{lem}\label{F_first_est}
		Let $(\bb{u}, \theta)$ with $\theta > 0$ be a smooth solution of \eqref{new:system}. 
		Then
		\begin{align}\label{F_first_est:est}
			\ddt\F(\bb u, \theta) &\leq -C_1 \|\nabla^2\theta^{\sul}\|_{\ld}^2 + C_2 \F^\frac12(\bb u, \theta) \|\nabla \theta^{\sul}\|_{L^4(\tn)}^2,
		\end{align}
		where $C_1, C_2 > 0$ only depend on $d$, $\mu$ and $\tn$.
	\end{lem}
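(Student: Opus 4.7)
The starting point is the identity from Lemma \ref{wonderfulformula}, which provides an exact expression for $\ddt\F(\bb u,\theta)$ as the sum of a dissipative term $-\io\theta|\nabla^2\log\theta|^2\,dx$ and a cross term $-\tfrac{\mu}{2}\io\frac{|\nabla\theta|^2}{\theta}\div\bb u_t\,dx$. The plan is to bound the first term from above by $-C_1\|\nabla^2\theta^{1/2}\|_{\ld}^2$ using Lemma \ref{theCin}, and to bound the second term by $C_2\F^{1/2}(\bb u,\theta)\|\nabla\theta^{1/2}\|_{L^4(\tn)}^2$ via Cauchy--Schwarz together with the algebraic identity $\tfrac{|\nabla\theta|^2}{\theta}=4|\nabla\theta^{1/2}|^2$.

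For the first term, since $\log\theta^{1/2}=\tfrac{1}{2}\log\theta$, Lemma \ref{theCin} (applied with $w=\theta$) gives
\begin{align*}
\io|\nabla^2\theta^{1/2}|^2\,dx \leq C\io\theta|\nabla^2\log\theta^{1/2}|^2\,dx = \tfrac{C}{4}\io\theta|\nabla^2\log\theta|^2\,dx,
\end{align*}
so $-\io\theta|\nabla^2\log\theta|^2\,dx \leq -C_1\|\nabla^2\theta^{1/2}\|_{\ld}^2$ for a suitable $C_1=C_1(d)>0$.

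For the cross term I would apply Cauchy--Schwarz to split $\div\bb u_t$ from $\tfrac{|\nabla\theta|^2}{\theta}$:
\begin{align*}
\left|\io\tfrac{|\nabla\theta|^2}{\theta}\div\bb u_t\,dx\right| \leq \|\div\bb u_t\|_{\ld}\left\|\tfrac{|\nabla\theta|^2}{\theta}\right\|_{\ld}.
\end{align*}
The divergence factor is controlled by $\|\div\bb u_t\|_{\ld}\leq\sqrt{d}\,\|\nabla\bb u_t\|_{\ld}\leq\sqrt{2d}\,\F^{1/2}(\bb u,\theta)$ directly from the definition of $\F$. For the other factor, the identity $\nabla\theta=2\theta^{1/2}\nabla\theta^{1/2}$ yields $\tfrac{|\nabla\theta|^2}{\theta}=4|\nabla\theta^{1/2}|^2$, hence $\|\tfrac{|\nabla\theta|^2}{\theta}\|_{\ld}=4\|\nabla\theta^{1/2}\|_{L^4(\tn)}^2$. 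Combining, $\big|\tfrac{\mu}{2}\io\tfrac{|\nabla\theta|^2}{\theta}\div\bb u_t\,dx\big|\leq C_2\F^{1/2}(\bb u,\theta)\|\nabla\theta^{1/2}\|_{L^4(\tn)}^2$ with $C_2$ depending only on $d$ and $\mu$.

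There is no real obstacle here once one sees the two identities above; the content is bookkeeping and a clean choice of $L^4$ norm on the right-hand side so that, in the subsequent lemma, a Gagliardo--Nirenberg interpolation can absorb $\|\nabla\theta^{1/2}\|_{L^4(\tn)}^2$ into the dissipative $\|\nabla^2\theta^{1/2}\|_{\ld}^2$ term. The only point requiring a little care is the bound on $\|\div\bb u_t\|_{\ld}$ in terms of $\F^{1/2}$, which is immediate from the definition but must be tracked through the constant $C_2$.
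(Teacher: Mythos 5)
Your proposal is correct and follows essentially the same route as the paper: it starts from the identity in Lemma \ref{wonderfulformula}, bounds the dissipative term via Lemma \ref{theCin} applied with $w=\theta$, and estimates the cross term by Cauchy--Schwarz/H\"older after rewriting $\tfrac{|\nabla\theta|^2}{\theta}=4|\nabla\theta^{\sul}|^2$ and absorbing $\|\div\bb u_t\|_{\ld}$ into $\F^{1/2}$. The extra bookkeeping you supply (the factor $\sqrt{2d}$ and the $L^4$ identity) is exactly what the paper leaves implicit.
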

	\begin{proof}
		By Lemma~\ref{wonderfulformula}, we get
		\begin{align*}
			\ddt \F(\bb u, \theta) =-\io\theta|\nabla^2\log\theta^{\sul}|^2dx-\frac\mu2\io\frac{|\nabla\theta|^2}\theta\div \bb{u}_tdx \eqqcolon I_1 + I_2.
		\end{align*}
		The term $I_1$ can be estimated by Lemma~\ref{theCin}. It yields
		\begin{align*}
			I_1\leq -C_1\|\nabla^2\theta^{\sul}\|_{\ld}^2,
		\end{align*}
		while on the other hand by Hölder's inequality,
		\begin{align*}
			I_2
			&\leq   C_2\|\nabla\theta^{\sul}\|_{L^4(\tn)}^2\|\div \bb{u}_t\|_{\ld}
			\leq    C_2 \F^\frac12(\bb u, \theta) \|\nabla\theta^{\sul}\|_{L^4(\tn)}^2.
			\qedhere
		\end{align*}
	\end{proof}
	
	In the next lemma, we present one of the key ingredients of our analysis. It provides the fundamental estimates necessary for proving our main results and will play a crucial role throughout the remainder of the paper.
	\begin{lem}\label{glestlem}
		Let $(\bb{u}, \theta)$ with $\theta > 0$ be a smooth solution of \eqref{new:system}. 
		Then
		\begin{align*}
			\ddt\F\leq C\|\nabla^2\theta^{\sul}\|_{\ld}^2(\F-D),
		\end{align*}
		where $C, D > 0$ only depend on $d$, $\mu$ and $\tn$.
	\end{lem}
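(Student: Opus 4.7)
My plan is to start from the estimate provided by Lemma~\ref{F_first_est},
\[
\ddt\F \leq -C_1 \|\nabla^2 \theta^{\sul}\|_{\ld}^2 + C_2 \F^{1/2} \|\nabla \theta^{\sul}\|_{L^4(\tn)}^2,
\]
and to dominate $\|\nabla \theta^{\sul}\|_{L^4(\tn)}^2$ by $\|\nabla^2 \theta^{\sul}\|_{\ld}^2$ with a constant depending only on $d$ and $\tn$. Combined with an elementary algebraic rearrangement, this will yield the claimed inequality.

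The key interpolation uses that $\nabla \theta^{\sul}$ is a gradient on $\tn$ and hence has vanishing mean. Applying the Gagliardo--Nirenberg inequality componentwise therefore gives, for $d \in \{2,3\}$,
\[
\|\nabla \theta^{\sul}\|_{L^4(\tn)}^2 \leq C \|\nabla \theta^{\sul}\|_{\ld}^{2-d/2} \|\nabla^2 \theta^{\sul}\|_{\ld}^{d/2},
\]
which is Ladyzhenskaya's inequality when $d=2$. I then convert the $\|\nabla \theta^{\sul}\|_{\ld}$ factors into $\|\nabla^2 \theta^{\sul}\|_{\ld}$ by invoking Lemma~\ref{inpion} componentwise, together with the pointwise estimate $|\Delta v| \leq \sqrt{d}\,|\nabla^2 v|$, which furnishes $\|\nabla \theta^{\sul}\|_{\ld} \leq C \|\nabla^2 \theta^{\sul}\|_{\ld}$. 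Substituting back gives the dimension-uniform bound $\|\nabla \theta^{\sul}\|_{L^4(\tn)}^2 \leq C \|\nabla^2 \theta^{\sul}\|_{\ld}^2$, and consequently
\[
\ddt \F \leq \bigl(C_3 \F^{1/2} - C_1\bigr) \|\nabla^2 \theta^{\sul}\|_{\ld}^2
\]
for a constant $C_3$ depending only on $d,\mu,\tn$.

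It remains to find $C,D>0$ such that $C(\F-D) \geq C_3 \F^{1/2} - C_1$ for all $\F \geq 0$. Substituting $y=\F^{1/2}\geq 0$, this reduces to the quadratic inequality $Cy^2 - C_3 y + (C_1 - CD) \geq 0$ on $[0,\infty)$, which holds as soon as its discriminant is non-positive; for instance, $C = C_3^2/C_1$ together with any $D \in (0, 3C_1^2/(4 C_3^2)]$ works.

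The main obstacle is organizing the interpolation so that it is Poincar\'e, rather than the Fisher-information bound $\|\nabla\theta^{\sul}\|_{\ld}^2 \leq \F/2$, that absorbs the $\|\nabla \theta^{\sul}\|_{\ld}$ factors; using the Fisher bound in its place would produce powers of $\F$ higher than $1/2$ multiplying a power of $\|\nabla^2 \theta^{\sul}\|_{\ld}$ strictly less than $2$, which cannot be closed into the desired threshold form. Once the correct splitting is chosen, the remainder of the argument is routine.
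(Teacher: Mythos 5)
Your proof is correct and follows essentially the same route as the paper: both hinge on combining Lemma~\ref{F_first_est} with the observation that Lemma~\ref{inpion} (rather than the Fisher-information bound) absorbs the lower-order factor, yielding $\|\nabla\theta^{\sul}\|_{L^4(\tn)}^2\le C\|\nabla^2\theta^{\sul}\|_{\ld}^2$ and hence $\ddt\F\le(C_3\F^{1/2}-C_1)\|\nabla^2\theta^{\sul}\|_{\ld}^2$. The only cosmetic difference is the final algebraic step, which you close via a discriminant condition on the quadratic in $\F^{1/2}$ while the paper applies Young's inequality with $\epsilon=C_1/2$.
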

	\begin{proof}
		By combining Lemma~\ref{F_first_est} and Lemma~\ref{inpion} and utilizing Young's inequality, we obtain
		\begin{align*}
			\ddt\F
			&\leq -C_1\|\nabla^2\theta^{\sul}\|_{\ld}^2+C_2 \F^\frac12(\bb u, \theta)\|\nabla^2\theta^{\sul}\|_{\ld} \\
			&\leq (\epsilon-C_1)\|\nabla^2\theta^{\sul}\|_{\ld}^2+ \frac{C_2}{\epsilon} \F(\bb u, \theta)\|\nabla^2\theta^{\sul}\|_{\ld}^2.
		\end{align*}
		Taking $\epsilon:=\frac{C_1}2$ and then $C=\frac{2C_2}{C_1}$ and $D=\frac{C_1^2}{4C_2}$, the conclusion follows.
	\end{proof}
	As a direct consequence of the above lemma, we arrive at the following estimate. 
	\begin{cor}[Global estimate for small data]\label{est:sol}
		Let $(\bb{u}, \theta)$ with $\theta > 0$ be a smooth solution of \eqref{new:system}. 
		Then
		\begin{eqnarray*}
			\mathcal{F}(\bb u_0, \theta_0)\leq D \implies \mathcal{F}(\bb u(\cdot, t), \theta(\cdot, t))\leq \mathcal{F}(\bb u_0, \theta_0)  \quad \text{for all } t\in(0,\infty),
		\end{eqnarray*}
		where $D$ is defined in Lemma~\ref{glestlem}.
	\end{cor}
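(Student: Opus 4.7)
The plan is to feed the differential inequality of Lemma~\ref{glestlem} into a standard integrating-factor / Grönwall argument. Set $\phi(t) := C\|\nabla^2\theta^{\sul}(t)\|_{\ld}^2 \ge 0$, so that Lemma~\ref{glestlem} reads
\[
\F'(t) - \phi(t)\,\F(\bb u(t),\theta(t)) \;\le\; -D\,\phi(t).
\]
Since the solution is assumed smooth and $\theta$ is strictly positive, $\phi$ is continuous in $t$, and the integrating factor $\mu(t) := \exp\!\bigl(-\int_0^t \phi(s)\,ds\bigr)$ is well defined, strictly positive on $[0,\infty)$, and satisfies $\mu' = -\phi\mu$.

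Multiplying through by $\mu$, the right-hand side becomes $-D\phi\mu = D\mu'$, so the inequality integrates on $[0,t]$ to
\[
\mu(t)\bigl(\F(\bb u(t),\theta(t)) - D\bigr) \;\le\; \F(\bb u_0,\theta_0) - D \;\le\; 0,
\]
using the smallness assumption $\F(\bb u_0,\theta_0) \le D$ in the last step. Dividing by $\mu(t) > 0$ yields the uniform a priori bound $\F(\bb u(t),\theta(t)) \le D$ for every $t \ge 0$.

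With this bound in hand, I would then substitute it back into Lemma~\ref{glestlem}: since $\F(t) - D \le 0$ and $C\|\nabla^2\theta^{\sul}\|_{\ld}^2 \ge 0$, the right-hand side of the differential inequality is now nonpositive, so $\ddt\F \le 0$. Hence $\F$ is non-increasing along the flow, which delivers the desired conclusion $\F(\bb u(t),\theta(t)) \le \F(\bb u_0,\theta_0)$ for every $t \in (0,\infty)$.

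I do not anticipate any serious obstacle here, as the corollary is essentially a one-step consequence of Lemma~\ref{glestlem}. The only mild subtlety is the borderline case $\F(\bb u_0,\theta_0) = D$, in which a naive continuity/bootstrap argument built on a strict-inequality set $\{\F < D\}$ would be awkward to close; the integrating factor above deals with this critical case cleanly and without any extra work.
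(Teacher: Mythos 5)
Your proof is correct and follows the same route the paper intends: the corollary is stated there as a direct consequence of Lemma~\ref{glestlem}, and your integrating-factor computation is just the standard rigorous way of showing that $\{\F\le D\}$ is forward-invariant and that $\F$ is then non-increasing. The observation that the integrating factor handles the borderline case $\F(\bb u_0,\theta_0)=D$ without a separate continuity argument is a nice touch, but does not change the substance.
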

	
	We now have the following estimate in the general case:
	\begin{lem}[Local estimate for large data]\label{loc:est}
		Let $(\bb{u}, \theta)$ with $\theta > 0$ be a smooth solution of \eqref{new:system}. 
		Then
		\begin{eqnarray*}
			\mathcal{F}(\bb u(\cdot, t), \theta(\cdot, t))\leq f(t)
		\end{eqnarray*}
		for all $t\in (0,T_{max})$, where $T_{max}>0$ and $f \in C(0, T_{max})$ only depend on $\mathcal{F}(\bb u_0, \theta_0), d, \mu,\tn$,
		while $f\nearrow \infty$ as $t\nearrow T_{max}$.
	\end{lem}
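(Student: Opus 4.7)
The plan is to replace the small-data absorbing argument of Lemma~\ref{glestlem} by a Gagliardo--Nirenberg interpolation that leaves a closed, super-linear scalar ODE for $\F$. The existence time $T_{\max}$ and the blowing-up function $f$ then come out of an explicit ODE comparison.

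Starting from Lemma~\ref{F_first_est},
$$
\frac{d}{dt}\F \le -C_1\|\nabla^2\theta^{\sul}\|_{\ld}^2 + C_2\F^{\sul}\|\nabla\theta^{\sul}\|_{L^4(\tn)}^2,
$$
I would first apply Gagliardo--Nirenberg on $\tn$. Since $\nabla\theta^{\sul}$ is a gradient and hence mean-zero on the torus, for $d\in\{2,3\}$ one has the clean interpolation
$$
\|\nabla\theta^{\sul}\|_{L^4(\tn)}^2 \le C\|\nabla^2\theta^{\sul}\|_{\ld}^{d/2}\|\nabla\theta^{\sul}\|_{\ld}^{2-d/2}.
$$
Combined with the elementary identity $\|\nabla\theta^{\sul}\|_{\ld}^2 = \tfrac14\int_{\tn}\tfrac{|\nabla\theta|^2}{\theta}dx \le \tfrac12\F$, this yields
$$
\frac{d}{dt}\F \le -C_1\|\nabla^2\theta^{\sul}\|_{\ld}^2 + C\F^{3/2-d/4}\|\nabla^2\theta^{\sul}\|_{\ld}^{d/2}.
$$

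Next I apply Young's inequality with conjugate exponents $(4/d,4/(4-d))$, which raises $\|\nabla^2\theta^{\sul}\|_{\ld}^{d/2}$ to the power $2$; choosing the coefficient in front of $\|\nabla^2\theta^{\sul}\|_{\ld}^2$ equal to $C_1/2$, the dissipative term absorbs the interpolation remainder, leaving
$$
\frac{d}{dt}\F \le C'\F^{p_d}, \qquad p_d := \frac{6-d}{4-d}\in\{2,3\}.
$$
A direct comparison with the scalar ODE $f' = C'f^{p_d}$, $f(0)=\F(\bb u_0,\theta_0)$, then yields the explicit super-solution
$$
f(t) = \bigl(\F(\bb u_0,\theta_0)^{1-p_d}-(p_d-1)C't\bigr)^{-1/(p_d-1)},
$$
which is continuous on $[0,T_{\max})$ with $T_{\max}:=[(p_d-1)C'\F(\bb u_0,\theta_0)^{p_d-1}]^{-1}$ and satisfies $f\nearrow\infty$ as $t\nearrow T_{\max}$; both $T_{\max}$ and $f$ depend only on $\F(\bb u_0,\theta_0)$, $d$, $\mu$, and $\tn$.

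The main technical point is arranging the Gagliardo--Nirenberg and Young exponents so that $\|\nabla^2\theta^{\sul}\|_{\ld}^{d/2}$ lands precisely at the power $2$ in a way that the dissipative term can absorb it \emph{uniformly} in $d\in\{2,3\}$; this is what forces the specific exponent $p_d=(6-d)/(4-d)$ and, in turn, explains why the guaranteed existence time degrades from order $1/\F(\bb u_0,\theta_0)$ in two dimensions to order $1/\F(\bb u_0,\theta_0)^2$ in three. A minor additional care is needed when the a priori estimate is later justified on the Galerkin level, but the ODE structure above is inherited by the approximating sequence without change.
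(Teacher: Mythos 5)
Your proposal is correct and follows essentially the same route as the paper: starting from Lemma~\ref{F_first_est}, interpolating $\|\nabla\theta^{\sul}\|_{L^4(\tn)}^2$ by Gagliardo--Nirenberg with the same exponents ($\eta=d/4$), absorbing the $\|\nabla^2\theta^{\sul}\|_{\ld}$-contribution into the dissipative term via Young's inequality, bounding $\|\nabla\theta^{\sul}\|_{\ld}^2$ by $\F$, and closing a superlinear ODE with the same powers $p_d=2$ ($d=2$) and $p_d=3$ ($d=3$). The only (valid, cosmetic) deviations are that you use the multiplicative form of Gagliardo--Nirenberg, justified by the mean-zero property of $\nabla\theta^{\sul}$ on the torus, where the paper keeps the additive lower-order term and absorbs it afterwards, and that you solve the comparison ODE explicitly rather than merely invoking its maximal solution.
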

	\begin{proof}
		We utilize the Gagliardo-Nirenberg inequality to get
		\begin{align*}
			\|\nabla\theta^{\sul}\|_{L^4(\tn)}^2\leq C\left(\|\nabla^2\theta^{\sul}\|_{\ld}^{2\eta}\|\nabla\theta^{\sul}\|_{\ld}^{2(1-\eta)}+\|\nabla\theta^{\sul}\|_{\ld}^2\right),
		\end{align*}
		where $\eta=\frac12$ for $d=2$ and $\eta=\frac34$ for $d=3$. We put this estimate into \eqref{F_first_est:est} and obtain
		\begin{align*}
			\ddt\F(\bb u, \theta)
			&\leq -C_1\|\nabla^2\theta^{\sul}\|_{\ld}^2\\
			&\mathrel{\hphantom{\leq}}+C_2 \F^\frac12(\bb u, \theta)\|\nabla^2\theta^{\sul}\|_{\ld}^{2\eta}\|\nabla\theta^{\sul}\|_{\ld}^{2(1-\eta)}+C_2 \F^\frac12(\bb u, \theta)\|\nabla\theta^{\sul}\|_{\ld}^2.
		\end{align*}
		Next, by the Young inequality we have
		\begin{align*}
			\ddt\F(\bb u, \theta)
			&\leq -C_1\|\nabla^2\theta^{\sul}\|_{\ld}^2
			+C_2 \F^\frac{\alpha}{2}(\bb u, \theta) \|\nabla\theta^{\sul}\|_{\ld}^2
			+C_2 \F^\frac12(\bb u, \theta) \|\nabla\theta^{\sul}\|_{\ld}^2\\
			&\leq C(\F^{1+\frac\alpha2}(\bb u, \theta)+\F^{\frac32}(\bb u, \theta))\leq C(\F^{1+\frac\alpha2}(\bb u, \theta)+1),
		\end{align*}
		where $\alpha=2$ for $d=2$ and $\alpha=4$ for $d=3$. The above inequality leads us to the desired conclusion,
		with $f$ being the solution to the ODE initial value problem $f' = C(f^{1+\frac\alpha2}+1)$, $f(0) = \F(\bb u_0, \theta_0)$
		and $T_{max} \in (0, \infty)$ being its maximal existence time.
	\end{proof}

	\section{Existence of unique solutions -- proof of Theorem~\ref{main1}}
	In this section, we construct solutions to \eqref{new:system}. We mimic the construction in \cite{CMT} on the torus $\tn$ by using a half-Galerkin procedure for solving the problem.
	
	\subsection{Approximate problem, approximate solutions and uniform estimates}
	Let $\{\phi_i\}$ be an orthonormal basis of scalar eigenfunctions\footnote{Let us notice that the function $\varphi_1\equiv1$ is also an eigenfunction of $-\Delta$ on $\tn$.} of $-\Delta$ in $\hom$ and denote 
	\begin{eqnarray*}
		V_n \coloneqq
		\begin{cases}
			\operatorname{span} \{(\varphi_i,0), (0,\varphi_i)\}_{1\leq i \leq n}, & \text{if } d=2,\\
			\operatorname{span} \{(\varphi_i,0,0), (0,\varphi_i,0), (0,0,\varphi_i)    \}_{1\leq i \leq n}, & \text{if } d=3.
		\end{cases}
	\end{eqnarray*}
	Let us state our approximate problem:
	\begin{defi}\label{apdef}
		We say that $(\bb{u}_n, \theta_n) \in C^1([0,\infty);V_n) \times C^0([0,\infty);\hom)$ is a global solution of an approximate problem if for all $\bb \phi\in V_n$ and $\psi\in \hom$ the following equations are satisfied in $\mathcal{D}'((0, \infty))$:
		\begin{align}
			\frac{d^2}{dt^2}\io \bb{u}_n\boldsymbol\phi dx+\io \nabla \bb{u}_n\cdot\nabla\boldsymbol\phi dx&=\mu\io\theta_n\div\boldsymbol\phi dx,\label{app:mom}\\
			\ddt\io\theta_n\psi dx+\io \nabla\theta_{n}\cdot\nabla\psi dx&=\mu\io\psi\theta_n \div \bb{u}_{n,t}dx. \label{app:heat}
		\end{align}
		Moreover, the approximate initial data are chosen as
		\begin{align*}
			\theta_n(0)=\theta_{0,n},\quad \bb{u}_n(0)=P_{V_n}\bb{u}_0,\quad \bb{u}_{n,t}(0)=P_{V_n}\bb{v}_0,
		\end{align*}
		where $P_{V_n}$ is the orthogonal projection of the corresponding spaces onto $V_n$, while $\theta_{0,n}$ is a strictly positive smooth regularization of $\theta_0$ such that $\theta_{0,n}\to \theta_0$ in $H^1(\tn)$ as $n\to\infty$ and\footnote{For example, this can be done by mollifying $\theta_0$ and adding a positive constant $c_n$ to reduce this norm, with $c_n\searrow 0$ as $n\to \infty$.}
		\begin{eqnarray*}
			\left\|\frac{\nabla \theta_{0,n}}{\sqrt{\theta_{0,n}}}\right\|_{L^2(T^d)} \leq \left\|\frac{\nabla \theta_0}{\sqrt{\theta_0}}\right\|_{L^2(T^d)}.
		\end{eqnarray*}
	\end{defi}
	We have the following existence result:
	\begin{prop}\label{aproxex}
		For every $n\in\mathbb{N}$, there exists a global solution $(\bb{u}_n,\theta_n)$ of the approximate problem in the sense of Definition \ref{apdef}.
		Moreover, one has $\theta_n\in L_{\loc}^2([0,\infty);\htwo)\cap H_{\loc}^1([0,\infty);\ld)$ and for every $T \in (0, \infty)$ there exist $\hat\theta_n>0$ such that $\theta_n(x,t)\geq\hat\theta_n$ for almost all $(t,x)\in [0,T]\times\tn$. The total energy balance
		\begin{align}
			&\ul\io |\bb{u}_{n,t}|^2(t)dx+\ul\io |\nabla \bb{u}_n|^2(t)dx+\io\theta_n(t) dx\nonumber\\
			&=\ul\io |\bb{v}_n(0)|^2dx+\ul\io |\nabla \bb{u}_n(0)|^2dx+\io\theta_n(0) dx,\label{balance:app}
		\end{align}
		and total dissipation balance
		\begin{eqnarray}
			&&\ul\io|\bb{u}_{n,t}|^2(t)dx+\ul\io |\nabla \bb{u}_n|^2(t)dx+\io(\theta_n-\log\theta_n)(t) dx + \int_0^t \io |\nabla\log\theta_n|^2 dx d\tau \nonumber\\
			&&=\ul\io |\bb{v}_n(0)|^2dx+\ul\io |\nabla \bb{u}_n(0)|^2dx+\io(\theta_n(0)-\log\theta_n(0)) dx, \label{taul2:app}
		\end{eqnarray}
		hold for all $t\in (0,\infty)$.
	\end{prop}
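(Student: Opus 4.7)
The plan is to use the classical half-Galerkin strategy: construct $(\bb u_n, \theta_n)$ locally by a fixed-point iteration alternating between a linear parabolic problem for $\theta_n$ and a finite-dimensional second-order linear ODE system for the Galerkin coefficients of $\bb u_n$; then extend to $[0,\infty)$ by a priori bounds coming from the energy balance; and finally derive the two balance laws by choosing admissible test functions in the weak formulation.

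For local existence, fix $T>0$ and define a map $\mathcal{T}$ on a closed ball of $X_T := \{\tilde{\bb u}\in C^1([0,T]; V_n) : \tilde{\bb u}(0)=P_{V_n}\bb u_0,\ \tilde{\bb u}_t(0)=P_{V_n}\bb v_0\}$ as follows. Given $\tilde{\bb u}$, the coefficient $\mu \div \tilde{\bb u}_t$ is smooth in $x$ and continuous in $t$ (because $V_n$ consists of smooth eigenfunctions of $-\Delta$), so standard linear parabolic theory on $\tn$ yields a unique solution $\theta \in L^2(0,T;\htwo)\cap H^1(0,T;\ld)\hookrightarrow C([0,T];\hom)$ of
\begin{equation*}
\partial_t \theta - \Delta \theta + \mu(\div \tilde{\bb u}_t)\,\theta = 0,\qquad \theta(0)=\theta_{0,n}.
\end{equation*}
The parabolic maximum principle then supplies the lower bound $\theta \geq \hat\theta_n := (\min_{\tn}\theta_{0,n})\,e^{-\mu T\|\div \tilde{\bb u}_t\|_{L^\infty((0,T)\times\tn)}}>0$. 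Next, insert this $\theta$ as forcing in \eqref{app:mom} and test against each basis vector of $V_n$; what remains is a linear second-order ODE system for the Galerkin coefficients of $\bb u$ with continuous right-hand side, uniquely solvable in $C^2([0,T];V_n)$. Set $\mathcal{T}(\tilde{\bb u}):=\bb u$. A short estimate based on continuous dependence of the heat equation on its first-order coefficient, combined with Lipschitz dependence of the ODE on its source, shows that $\mathcal{T}$ is a contraction for $T$ small enough (depending on $n$), producing a fixed point $(\bb u_n,\theta_n)$ with the stated regularity and positivity.

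For the global extension and the identity \eqref{balance:app}, I would test \eqref{app:mom} with $\bb\phi = \bb u_{n,t}(t) \in V_n$ (admissible because $V_n$ is finite-dimensional and $\bb u_{n,t}\in C([0,T];V_n)$) and \eqref{app:heat} with the constant eigenfunction $\varphi_1\equiv 1$; the coupling terms then cancel and integration in time gives \eqref{balance:app}. This provides uniform bounds on $\|\bb u_{n,t}\|_{\ld}$, $\|\nabla \bb u_n\|_{\ld}$ and $\|\theta_n\|_{L^1(\tn)}$ on every compact interval, which (by norm equivalence on the finite-dimensional $V_n$) controls $\|\div \bb u_{n,t}\|_{L^\infty}$ as well; together with the maximum-principle bound this prevents finite-time blow-up and yields the extension to $[0,\infty)$ while preserving the strict positivity on each fixed $[0,T]$. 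For \eqref{taul2:app}, I would exploit the strict positivity and the parabolic regularity of $\theta_n$ to use $\psi = \theta_n^{-1}$ (equivalently $\log\theta_n$) in \eqref{app:heat}, obtaining the entropy identity $\ddt\io(\log\theta_n + \mu\div \bb u_n)\,dx = \io|\nabla\log\theta_n|^2\,dx$; since $\io\div \bb u_n\,dx=0$ on the torus, integration in time and subtraction from \eqref{balance:app} produce \eqref{taul2:app}.

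The main obstacle I anticipate is the rigorous use of the nonlinear test function $\psi = \theta_n^{-1}$ and of the accompanying chain rule for $\nabla \log\theta_n$, which requires both strict positivity and sufficient regularity of $\theta_n$. Both are in hand from the fixed-point construction — the uniform lower bound $\theta_n\geq\hat\theta_n$ from the maximum principle, and the $L^2(0,T;\htwo)\cap H^1(0,T;\ld)$ regularity from linear parabolic theory — so the remaining work is mostly to verify that this lower bound is preserved under the continuation argument. The energy balance \eqref{balance:app}, giving uniform-in-time control of $\bb u_{n,t}$ and hence of $\div \bb u_{n,t}$ on the finite-dimensional space $V_n$, is precisely the tool that lets the maximum-principle bound persist on every finite interval, closing the loop.
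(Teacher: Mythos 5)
Your proposal is correct and follows essentially the same half-Galerkin strategy as the paper: a fixed-point construction coupling a linear parabolic problem for $\theta_n$ with a finite-dimensional ODE system for $\bb u_n$, a maximum-principle lower bound for $\theta_n$, global extension via the energy balance combined with norm equivalence on $V_n$, and the two balance laws obtained by testing with $\bb u_{n,t}$, with $\psi\equiv 1$, and with $\theta_n^{-1}$. The only deviation is that you close the local step with a Banach contraction for small $T$ followed by continuation, whereas the paper (following \cite{CMT}) invokes Schaefer's fixed point theorem; both are standard and the difference is immaterial.
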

	\begin{proof}
		For the reader's convenience, let us briefly summarize the main steps. The solutions of \eqref{app:mom} and \eqref{app:heat} are obtained by standard and parabolic ODE theory, respectively, and the solution to the whole system \eqref{app:mom}--\eqref{app:heat} is obtained by Schaefer’s fixed point theorem. The regularity for $\theta_n$ follows by standard $L^2$ estimates for the heat equation, while the $T$-dependent lower bound for $\theta_n$ follows by a maximum principle argument. The identities \eqref{balance:app} and \eqref{taul2:app} follow from Lemma~$\ref{uni:est}$, as the regularity is sufficient to carry out the proof on this approximate level. More details can be found in \cite[Proposition~1 and Lemma~4.2]{CMT}.
	\end{proof}
	
	\begin{rem}\label{approx_smooth}
		The solution $(\bb{u}_n,\theta_n)$ obtained in the previous lemma is regular.
		More precisely, we have $\bb{u}_n \in C^2([0,T]; V_n)$ and $\theta_n\in L^2(0,T; H^2(\tn))$, and since $V_n$ consists of smooth eigenfunctions, one obtains that the source term of the heat equation $\eqref{app:heat}$ is $-\mu \theta_n \div \bb{u}_{n,t} \in L^2(0,T; W^{1,p}(\tn))$ for some $p>3$, so by maximal regularity estimates \cite[Theorem 4.10.7]{Amann}, one has that $\theta_n\in C([0,T]; W^{1,p}(\tn)) \hookrightarrow C([0,T]; C^{0,\alpha}( \tn))$ for some $\alpha\in(0,1)$. Now $-\mu \theta_n \bb{u}_n \in C([0,T]; C^{0,\alpha}(\tn))$ so $\theta_n$ is a classical solution to $\eqref{app:heat}$ with $\theta_n \in C([0,T]; C^{2,\alpha}(\tn))$ and $\theta_{n,t}\in ([0,T]; C^{0,\alpha}(\tn))$ (see \cite[Theorem 5.1.2]{Lunardi}).
	\end{rem}
	
	By Proposition \ref{aproxex}, we directly obtain the uniform estimate summarized in the following corollary.
	\begin{cor}\label{uni:est:apr}
		The approximate solutions satisfy the following uniform estimates:
		\begin{align}
			\|\bb{u}_{n,t}\|_{L^\infty(0,\infty; L^2(\mathbb{T}^d) )} \leq C,\nonumber \\
			\|\nabla\bb{u}_{n}\|_{L^\infty(0,\infty; L^2(\mathbb{T}^d) )} \leq C,\nonumber\\
			\|\theta_n\|_{L^\infty(0,\infty; L^1(\mathbb{T}^d) )} \leq C,\nonumber \\
			\|\log\theta_n\|_{L^\infty(0,\infty; L^1(\mathbb{T}^d) )} \leq C,\label{uni:est4}\\
			\|\nabla\log\theta_n\|_{L^2(0,\infty; L^2(\mathbb{T}^d) )} \leq C, \nonumber
		\end{align}
		where $C$ does not depend on $n$.
	\end{cor}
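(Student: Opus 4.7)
The plan is to read off the five estimates directly from the balances \eqref{balance:app} and \eqref{taul2:app} of Proposition~\ref{aproxex}, after verifying that their right-hand sides are uniformly bounded in $n$.

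First I would check uniformity of the initial data. Since $P_{V_n}$ is an orthogonal projection in $L^2(\tn)$ (and commutes with $\nabla$ because the $\varphi_i$ are eigenfunctions of $-\Delta$), one has
\begin{align*}
\|\bb v_n(0)\|_{\ld}\le \|\bb v_0\|_{\ld},\qquad \|\nabla\bb u_n(0)\|_{\ld}\le \|\nabla\bb u_0\|_{\ld},
\end{align*}
while the assumption $\theta_{0,n}\to\theta_0$ in $H^1(\tn)$ (together with the choice of $\theta_{0,n}$ in Definition~\ref{apdef}, guaranteeing $\log\theta_{0,n}\in L^1(\tn)$ uniformly in $n$) yields $\|\theta_{0,n}\|_{L^1(\tn)}+\|\log\theta_{0,n}\|_{L^1(\tn)}\le C$. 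Hence the right-hand sides of \eqref{balance:app} and \eqref{taul2:app} are bounded by a constant independent of $n$.

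Next I would extract the estimates. Since $\theta_n>0$ by Proposition~\ref{aproxex}, every term on the left of \eqref{balance:app} is non-negative, which immediately gives
\begin{align*}
\|\bb u_{n,t}\|_{L^\infty(0,\infty;\ld)}+\|\nabla\bb u_n\|_{L^\infty(0,\infty;\ld)}+\|\theta_n\|_{L^\infty(0,\infty;L^1(\tn))}\le C,
\end{align*}
that is, the first three lines of the claim. From \eqref{taul2:app} the last summand on the left is non-negative, and because $x-\log x\ge 1$ for all $x>0$ the term $\io(\theta_n-\log\theta_n)(t)\,dx$ is likewise non-negative. Discarding the other non-negative pieces, one infers
\begin{align*}
\int_0^\infty\io|\nabla\log\theta_n|^2\,dx\,d\tau\le C,\qquad \sup_{t\ge0}\io(\theta_n-\log\theta_n)(t)\,dx\le C,
\end{align*}
which is the estimate \eqref{uni:est4} stated for $\nabla\log\theta_n$.

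It remains to derive the uniform $L^1$-bound on $\log\theta_n$. Writing $\log\theta_n=(\log\theta_n)^+-(\log\theta_n)^-$, I would use $\log x\le x$ on $\{\log\theta_n\ge 0\}$ to get
\begin{align*}
\io(\log\theta_n)^+\,dx\le\io\theta_n\,dx\le C,
\end{align*}
while on $\{\log\theta_n<0\}$ the inequality $-\log\theta_n\le\theta_n-\log\theta_n$ (valid since $\theta_n\ge 0$) yields
\begin{align*}
\io(\log\theta_n)^-\,dx\le\io(\theta_n-\log\theta_n)\,dx\le C.
\end{align*}
Adding the two bounds gives $\|\log\theta_n(t)\|_{L^1(\tn)}\le C$ uniformly in $t\ge 0$ and $n\in\mathbb{N}$, completing the proof. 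The only mildly delicate point is this last step (splitting $\log\theta_n$ into positive and negative parts to convert the bound on $\io(\theta_n-\log\theta_n)$ into one on $\io|\log\theta_n|$); everything else is a direct consequence of the balances already established.
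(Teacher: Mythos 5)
Your proposal is correct and follows exactly the route the paper intends: the paper simply asserts that the corollary follows "directly" from the balances \eqref{balance:app} and \eqref{taul2:app} of Proposition~\ref{aproxex}, and you have supplied the standard details (uniform control of the projected initial data, non-negativity of each term, and the splitting of $\log\theta_n$ into positive and negative parts to convert the bound on $\io(\theta_n-\log\theta_n)$ into the $L^1$ bound on $\log\theta_n$). No issues.
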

	
	\subsubsection{Global estimates for small data}
	In this section, we establish global estimates for small initial data, ensuring uniform bounds that will be crucial for proving global existence.
	
	Let us first formulate an auxiliary regularity fact.
	
	\begin{prop}\label{pomocnicze}
		For all $M > 0$ there exists $K = K (M, d, \tn) > 0$ such that for all $T \in (0, \infty]$, all $g \colon (0, T) \times \tn \to (0, \infty)$ with $\sqrt{g}\in L^\infty(0,T;H^1(\tn))$ and all $h \in W^{1,\infty}(0,T;L^2(\tn))$ satisfiying
		\[
		\|\sqrt{g}\|_{L^\infty(0,T;H^1(\tn))} \le M
		\quad \text{and} \quad
		\|h\|_{W^{1,\infty}(0,T;L^2(\tn))} \le M,
		\] 
		the functions $g$ and $g h_t$ belong to $L^\infty(0,T; W^{1,3/2}(\tn))$ and $L^{\infty}(0,T;L^{6/5}(\tn))$, respectively, and fulfill
		\[
		\|g\|_{L^\infty(0,T; W^{1,3/2}(\tn))} \le K
		\quad \text{and} \quad
		\|g h_t\|_{L^{\infty}(0,T;L^{6/5}(\tn))} \le K.
		\]
	\end{prop}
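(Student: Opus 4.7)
The plan is to reduce everything to two elementary ingredients: the Sobolev embedding $H^1(\tn)\hookrightarrow L^6(\tn)$, valid since $d\in\{2,3\}$, and the chain-rule identity $\nabla g=2\sqrt{g}\,\nabla\sqrt{g}$ in the sense of distributions (which is standard for nonnegative $\sqrt g\in H^1$). From $\|\sqrt{g}\|_{L^\infty(0,T;H^1(\tn))}\le M$ and Sobolev, I first obtain $\|\sqrt{g}\|_{L^\infty(0,T;L^6(\tn))}\le C(d,\tn)M$, which upon squaring gives the auxiliary bound
\[
\|g\|_{L^\infty(0,T;L^3(\tn))}=\|\sqrt{g}\|_{L^\infty(0,T;L^6(\tn))}^2\le C(d,\tn)M^2.
\]
This is the workhorse estimate that feeds both of the required bounds.

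For the $W^{1,3/2}$-bound on $g$, I control the $L^{3/2}$-part by $\|g\|_{L^{3/2}(\tn)}\le C(\tn)\|g\|_{L^3(\tn)}$ (finite measure). For the gradient part, using the chain rule and Hölder's inequality with conjugate exponents $6$ and $2$ (so that $\tfrac{1}{6}+\tfrac{1}{2}=\tfrac{2}{3}=\tfrac{1}{3/2}$), I estimate
\[
\|\nabla g\|_{L^{3/2}(\tn)}=2\|\sqrt{g}\,\nabla\sqrt{g}\|_{L^{3/2}(\tn)}\le 2\|\sqrt{g}\|_{L^6(\tn)}\|\nabla\sqrt{g}\|_{L^2(\tn)}\le C(d,\tn)M^2.
\]
Taking the essential supremum in $t$ yields the first claim.

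For the $L^{6/5}$-bound on $g h_t$, I apply Hölder's inequality with exponents $3$ and $2$ (noting $\tfrac{1}{3}+\tfrac{1}{2}=\tfrac{5}{6}=\tfrac{1}{6/5}$), and use $\|h_t\|_{L^\infty(0,T;L^2(\tn))}\le\|h\|_{W^{1,\infty}(0,T;L^2(\tn))}\le M$ together with the auxiliary $L^3$-bound on $g$:
\[
\|g h_t\|_{L^{6/5}(\tn)}\le\|g\|_{L^3(\tn)}\|h_t\|_{L^2(\tn)}\le C(d,\tn)M^3.
\]
Setting $K$ equal to the maximum of the constants produced above gives the conclusion.

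There is no real obstacle; the only point that warrants a sentence of justification is the chain-rule identity $\nabla g=2\sqrt{g}\,\nabla\sqrt{g}$. For $\sqrt g\in L^\infty(0,T;H^1(\tn))$ with $g>0$, this follows by the standard Stampacchia/truncation argument (or by approximating $\sqrt{g}$ by smooth functions and passing to the limit), and is compatible with the measurability requirements inherited from the fact that $g$ is defined on $(0,T)\times\tn$.
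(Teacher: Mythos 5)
Your proof is correct and follows essentially the same route as the paper's: the Sobolev embedding $H^1(\tn)\hookrightarrow L^6(\tn)$ to get the $L^\infty_t L^3_x$ bound on $g$, Hölder with exponents $6$ and $2$ combined with $\nabla g = 2\sqrt{g}\,\nabla\sqrt{g}$ for the gradient term, and Hölder with exponents $3$ and $2$ for $gh_t$. Your added remark justifying the chain-rule identity is a reasonable (if brief) supplement to what the paper leaves implicit.
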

	\begin{proof}
		First, we notice
		\begin{eqnarray*}
			&&\|g\|_{L^\infty(0,T; L^{3}(\tn))} = \|\sqrt{g}\|_{L^\infty(0,T; L^{6}(\tn))}^2 \leq C\|\sqrt{g}\|_{L^\infty(0,T; H^1(\tn))}^2 \leq CM^2,\\
			&&\|\nabla g\|_{L^\infty(0,T; L^{3/2}(\tn))}=
			\left\| \sqrt{g} \frac{\nabla g}{\sqrt{g}}\right\|_{L^\infty(0,T; L^{3/2}(\tn))} \leq  \|\sqrt{g}\|_{L^\infty(0,T; L^{6}(\tn))}  \left\|\frac{\nabla g}{\sqrt{g}}\right\|_{L^\infty(0,T; L^{2}(\tn))}\leq CM^2,
		\end{eqnarray*}
		so also $\|g\|_{L^\infty(0,T; W^{1,3/2}(\tn))} \le CM^2$.
		Since moreover
		\[
		\|gh_t\|_{L^\infty(0, T; L^{6/5}(\tn))} \le \|g\|_{L^\infty(0, T; L^{3}(\tn))} \|h_t\|_{L^\infty(0, T; L^{2}(\tn))}
		\]
		by Hölder's inequality, we also obtain the second estimate.
	\end{proof}
	
	\begin{lem}\label{glob:est}
		Assume that $(\bb{u}_n,\theta_n)$ is a global solution to an approximate problem in the sense of Definition \ref{apdef} and that the initial values satisfy the inequality
		\begin{align}\label{globest:initcond}
			\initcond,
		\end{align}
		where $D$ is defined in Lemma \ref{est:sol}. Then for any $T>0$:
		\begin{align}
			\|\nabla\bb{u}_{n,t}\|_{L^{\infty}(0,T;\ld)}&\leq C, \label{t1glest}\\
			\|\Delta\bb{u}_{n}\|_{L^{\infty}(0,T;\ld)}&\leq C,\nonumber\\
			\|\sqrt{\theta_n}\|_{L^\infty(0,T; H^1(\tn))}&\leq C, \label{t4glest} \\
			\|\theta_{n}\|_{L^2(0,T; H^{2}(\tn))} +\|\nabla\theta_n\|_{C([0,T]; L^{2}(\tn))} +\|\theta_{n,t}\|_{L^2(0,T; L^2(\tn))} &\leq \tilde{C}(T), \nonumber \\
			\|\bb{u}_{n,tt}\|_{L^{\infty}(0,T;\ld)}&\leq \tilde{C}(T),\label{t3glest}
		\end{align}
		where $C,m,M$ do not depend on $n$ or $T$ and $\tilde{C}(t)$ does not depend on $n$ and $\tilde{C}(t)\nearrow \infty$ as $t\to\infty$.
	\end{lem}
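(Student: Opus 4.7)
The plan is to propagate the $\F$-estimate from Corollary \ref{est:sol} up to the Galerkin solutions $(\bb u_n, \theta_n)$; from this, the three $T$-independent bounds fall out immediately, the remaining estimates on $\theta_n$ follow from parabolic energy estimates, and the estimate on $\bb u_{n,tt}$ follows from a direct inspection of the discrete momentum equation.

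First I would check that $\F(\bb u_n(0), \theta_n(0)) \le D$. Since $\{\phi_i\}$ is a basis of eigenfunctions of $-\Delta$ on $\tn$, the space $V_n$ is stable under $-\Delta$, so $P_{V_n}$ is simultaneously orthogonal with respect to the $L^2$, $H^1$ and $H^2$ inner products; hence $\|\nabla P_{V_n} \bb v_0\|_{\ld} \le \|\nabla \bb v_0\|_{\ld}$ and $\|\Delta P_{V_n} \bb u_0\|_{\ld} \le \|\Delta \bb u_0\|_{\ld}$, and the choice of $\theta_{0,n}$ in Definition \ref{apdef} ensures that the Fisher term does not increase either, so that the smallness assumption \eqref{globest:initcond} transfers to the approximation. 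I would then derive the approximate analogue of Corollary \ref{est:sol} by reproducing the computation of Lemma \ref{wonderfulformula} at the discrete level: because $V_n$ is $-\Delta$-invariant, the function $-\Delta \bb u_{n,t}$ lies in $V_n$ and is admissible as a test function in \eqref{app:mom}, reproducing identity \eqref{wfu1}; the subsequent Helmholtz identity $\Delta \bb u_{n,t} = -\curl\curl \bb u_{n,t} + \nabla \div \bb u_{n,t}$ and the integration by parts go through unchanged. On the thermal side, \eqref{app:heat} is tested against arbitrary $\psi \in \hom$, and by Remark \ref{approx_smooth} the function $\theta_n$ is a classical strictly positive solution, so dividing by $2\theta_n^{\sul}$ and multiplying by $-4\Delta\theta_n^{\sul}$ as in Lemma \ref{wonderfulformula} is fully rigorous. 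Assembling both identities and invoking Lemmas \ref{F_first_est}--\ref{glestlem} gives $\ddt \F(\bb u_n, \theta_n) \le C \|\nabla^2 \theta_n^{\sul}\|_{\ld}^2 (\F(\bb u_n, \theta_n) - D)$, so that $\F(\bb u_n(t), \theta_n(t))$ remains bounded by its initial value for all $t \ge 0$, uniformly in $n$. The first two estimates of the statement and \eqref{t4glest} are then immediate, the latter by combining the Fisher bound with the $L^\infty(L^1)$-control of $\theta_n$ from Corollary \ref{uni:est:apr}.

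For the $T$-dependent bounds on $\theta_n$, I would view \eqref{app:heat} as a linear parabolic problem with source $-\mu \theta_n \div \bb u_{n,t}$. Testing in sequence with $\theta_n$ and with $-\Delta \theta_n$ and exploiting the uniform control of $\sqrt{\theta_n}$ in $L^\infty(H^1)$ (which yields $\theta_n \in L^\infty(L^3)$ in $d=3$ and $\theta_n \in L^\infty(L^p)$ for every $p<\infty$ in $d=2$), together with Gagliardo--Nirenberg interpolation, the nonlinear source can be partially absorbed into the parabolic dissipation at the cost of a term polynomial in $\|\nabla \theta_n\|_{\ld}$. A Grönwall argument then provides the claimed $T$-dependent bounds on $\theta_n$ in $L^\infty H^1 \cap L^2 H^2 \cap H^1 L^2$, from which $\nabla \theta_n \in C([0,T]; \ld)$ follows by an Aubin--Lions-type embedding. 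Finally, the $V_n$-invariance of $-\Delta$ and \eqref{app:mom} identify $\bb u_{n,tt} = \Delta \bb u_n - \mu P_{V_n} \nabla \theta_n$, and contractivity of $P_{V_n}$ on $L^2$ yields $\|\bb u_{n,tt}\|_{\ld} \le \|\Delta \bb u_n\|_{\ld} + \mu \|\nabla \theta_n\|_{\ld}$, which gives \eqref{t3glest} via the previous step.

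The main obstacle is the parabolic estimate. A priori the source $\theta_n \div \bb u_{n,t}$ only lies in $L^\infty(L^{6/5})$ (cf.\ Proposition \ref{pomocnicze}), which is too weak to reach $L^2(H^2)$-regularity via maximal parabolic estimates alone; one must therefore first upgrade $\theta_n$ to $L^2(L^6)$ by a preliminary energy estimate and only then close the $H^2$-bound. This nonlinear coupling with $\div \bb u_{n,t}$ is precisely why the bounds on $\theta_n$ in $L^2 H^2$ and on $\bb u_{n,tt}$ cannot be taken $T$-independent and deteriorate as $T \nearrow \infty$.
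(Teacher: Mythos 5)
Your proposal is correct and follows the same overall architecture as the paper's proof: propagate the estimate of Corollary~\ref{est:sol} to the Galerkin level (using that $-\Delta \bb u_{n,t}\in V_n$ is an admissible test function and that, by Remark~\ref{approx_smooth}, the manipulations of Lemma~\ref{wonderfulformula} are rigorous for $\theta_n$), read off the three $T$-independent bounds, then obtain the $T$-dependent parabolic regularity of $\theta_n$, and finally deduce \eqref{t3glest} from $\bb u_{n,tt}=\Delta\bb u_n-\mu P_{V_n}\nabla\theta_n$. You add one worthwhile detail the paper leaves implicit, namely the verification that $\F(\bb u_n(0),\theta_n(0))\le D$ via the spectral invariance of $V_n$ and the normalization of $\theta_{0,n}$ in Definition~\ref{apdef}. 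The only genuine divergence is the middle step: the paper starts from the uniform $L^\infty(0,T;L^{6/5}(\tn))$ bound on $\theta_n\div\bb u_{n,t}$ furnished by Proposition~\ref{pomocnicze} and bootstraps through Amann's maximal regularity theorem, successively improving the integrability of the source until it lands in $L^2(0,T;\ld)$, whereas you test the heat equation with $\theta_n$ and $-\Delta\theta_n$ and absorb the source into the dissipation via Gagliardo--Nirenberg interpolation against the uniform $L^\infty(0,T;L^3(\tn))$ bound on $\theta_n$, closing with Gr\"onwall. Both routes are standard and both close (in $d=3$ one has $\|\theta_n\|_{L^\infty(\tn)}\lesssim\|\theta_n\|_{H^2(\tn)}^{2/3}\|\theta_n\|_{L^3(\tn)}^{1/3}$, so the source term contributes at most $\|\theta_n\|_{H^2(\tn)}^{5/3}$, which Young's inequality absorbs); the maximal-regularity route has the advantage of delivering $\nabla\theta_n\in C([0,T];\ld)$ directly from the trace theory of the interpolation spaces, which in your approach requires the additional (but routine) Aubin--Lions/continuity argument you mention.
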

	\begin{proof}
		By Remark~\ref{approx_smooth}, all necessary calculation concerning the identity in Lemma~\ref{wonderfulformula} and the inequality in Lemma~\ref{glestlem} hold at the level of approximate solutions.
		(We note that in the proof of Lemma~\ref{wonderfulformula}, the first equation is only tested with $-\Delta \bb u_{n, t} \in V_n$.)
		This leads to the following uniform bounds which follow from Corollary~\ref{est:sol}:
		\begin{eqnarray*}
			\|\nabla \bb{u}_{n,t}\|_{L^\infty(0,T;L^2(T^d))} \leq C, \quad 
			\|\Delta \bb{u}_n\|_{L^\infty(0,T;L^2(T^d))} \leq C, \quad 
			\left\|\frac{\nabla \theta_n}{\sqrt{\theta_n}}\right\|_{L^\infty(0,T;L^2(T^d))} \leq C.
		\end{eqnarray*}
		We are now in a position to apply Proposition~\ref{pomocnicze} with $g\coloneqq\theta_n$ and $h\coloneqq\div \bb u_n$.  Consequently,
		we have that $\theta_n \div(\bb u_{n,t})$ is uniformly bounded in $L^\infty(0,T; L^{6/5}(\tn)) \hookrightarrow L^2(0,T; L^{6/5}(\tn))$. Therefore, by the maximal regularity estimates (cf.\ \cite[Theorem 4.10.7]{Amann}), one has 
		\begin{eqnarray*}
			\|\theta_n\|_{H^1(0,T; L^{6/5}(\tn))} + \|\theta_n\|_{L^2(0,T; W^{2,6/5}(\tn))} \leq \tilde{C}_1(\|\theta_{0,n}\|_{W^{1,6/5}(\tn)}+ \|\theta_n \div \bb{u}_{n,t}\|_{L^2(0,T; L^{6/5}(\tn))})
		\end{eqnarray*}
		where $\tilde{C}_1$ depends on $T,\tn,\mu$. This in particular implies that $\theta_n \div \bb{u}_{n,t}$ can be bounded in a better space, so we can again (successively) apply the same maximal regularity arguments to increase the regularity
		\begin{align*}
			\|\theta_n\|_{H^1(0,T; L^{2}(\tn))}& + \|\nabla\theta_n\|_{C([0,T]; L^{2}(\tn))}+\|\theta_n\|_{L^2(0,T; H^{2}(\tn))} \\
			&\leq \tilde{C}(\|\theta_{0,n}\|_{H^{1}(\tn)}+ \|\theta_n \div \bb{u}_{n,t}\|_{L^2(0,T; L^{2}(\tn))}),
		\end{align*}
		where $\tilde{C}$ depends on $T,\tn,\mu$. Since $\bb{u}_{n,tt} = \Delta\bb{u}_n -\mu P_{V_n}\nabla\theta_n$, the estimate $\eqref{t3glest}$ follows.
	\end{proof}
	
	\subsubsection{Local estimates for large data}
	
	In this section, we establish local-in-time estimates for solutions corresponding to large initial data.
	Unlike in the previous section, we now derive estimates that remain valid also for initial data not fulfilling \eqref{globest:initcond} but only hold up to some finite time \(T_{\max}\). 
	
	\begin{lem}\label{l:est}
		Assume that $(\bb{u}_n,\theta_n)$ is a global solution to an approximate problem in the sense of Definition \ref{apdef}.
		Then, independently of $n$, there exist $T_{max}>0$ and $f \in C^0([0, T_{max}); (0, \infty))$ with $f(t) \nearrow \infty$ for $t \nearrow T_{max}$ such that for all $n \in \mathbb N$ and all $T<T_{max}$,
		\begin{align*}
			\|\nabla\bb{u}_{n,t}\|_{L^{\infty}(0,T;\ld)}&\leq f(T),\\
			\|\Delta\bb{u}_{n}\|_{L^{\infty}(0,T;\ld)}&\leq f(T),\\
			\left\|\sqrt{ \theta_n}\right\|_{L^{\infty}(0,T;\hom)}&\leq f(T),  \\
			\|\theta_{n}\|_{L^2(0,T; H^{2}(\tn))} +\|\nabla\theta_n\|_{C([0,T]; L^{2}(\tn))} +\|\theta_{n,t}\|_{L^2(0,T; L^2(\tn))}  &\leq f(T), \\
			\|\bb{u}_{n,tt}\|_{L^{\infty}(0,T;\ld)}&\leq f(T). 
		\end{align*}
	\end{lem}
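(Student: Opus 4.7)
The plan is to follow the proof of Lemma~\ref{glob:est} almost verbatim, substituting the global smallness-based bound from Corollary~\ref{est:sol} with the local-in-time bound from Lemma~\ref{loc:est}. By Remark~\ref{approx_smooth}, the approximate solutions are regular enough that the identity from Lemma~\ref{wonderfulformula} (testing the wave equation with $-\Delta\bb u_{n,t} \in V_n$, which is legal since $V_n$ is spanned by Laplacian eigenfunctions) and hence the differential inequality from Lemma~\ref{glestlem} hold at the approximate level. Applying Lemma~\ref{loc:est} to $(\bb u_n, \theta_n)$ would a priori yield $T_{\max,n}$ and $f_n$ depending on $\mathcal{F}(\bb u_{0,n}, \theta_{0,n})$, but the construction of the approximate initial data in Definition~\ref{apdef} ensures that $\mathcal{F}(\bb u_{0,n},\theta_{0,n})$ is uniformly bounded (in particular, $\|\nabla\theta_{0,n}/\sqrt{\theta_{0,n}}\|_{L^2}$ is controlled by the corresponding norm of $\theta_0$, and $\|\Delta\bb u_{0,n}\|_{L^2}$, $\|\nabla\bb v_{0,n}\|_{L^2}$ are bounded by $\|\Delta\bb u_0\|_{L^2}$, $\|\nabla\bb v_0\|_{L^2}$ since the projectors $P_{V_n}$ commute with $\Delta$). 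Consequently, $T_{\max}$ and $f$ can be chosen independently of $n$.

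From $\mathcal{F}(\bb u_n(t), \theta_n(t)) \le f(t)$ on $(0, T_{\max})$ we read off directly, for every $T < T_{\max}$,
\[
\|\nabla\bb u_{n,t}\|_{L^\infty(0,T;\ld)}^2 + \|\Delta\bb u_n\|_{L^\infty(0,T;\ld)}^2 + \left\|\frac{\nabla\theta_n}{\sqrt{\theta_n}}\right\|_{L^\infty(0,T;\ld)}^2 \le 2f(T).
\]
The bound on $\sqrt{\theta_n}$ in $L^\infty(0,T;\hom)$ then follows by combining $\nabla\sqrt{\theta_n} = \frac{\nabla\theta_n}{2\sqrt{\theta_n}}$ with the uniform $L^1$-bound on $\theta_n$ supplied by the energy balance \eqref{balance:app} (and a Poincaré-type argument for $\sqrt{\theta_n}$ via its spatial average).

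For the higher regularity of $\theta_n$ the argument is identical to that of Lemma~\ref{glob:est}. I would apply Proposition~\ref{pomocnicze} with $g=\theta_n$ and $h=\div\bb u_n$ (both bounded uniformly in the required norms by what we just established, up to replacing the constant $M$ by $f(T)$) to deduce that $\theta_n \div \bb u_{n,t}$ is bounded in $L^2(0,T;L^{6/5}(\tn))$. Two successive applications of the parabolic maximal regularity estimate from \cite[Theorem~4.10.7]{Amann} then upgrade $\theta_n$ to the regularity claimed, and the bound on $\bb u_{n,tt}$ is read off from the momentum equation $\bb u_{n,tt} = \Delta\bb u_n - \mu P_{V_n}\nabla\theta_n$. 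All bounds obtained in this step depend on $T$ but not on $n$; after absorbing them into the same $f$ (possibly replacing it with a larger continuous function still blowing up at $T_{\max}$), the lemma follows.

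The only nontrivial point is the $n$-uniformity of $T_{\max}$ and $f$, which as noted reduces to the $n$-uniformity of $\mathcal{F}(\bb u_{0,n}, \theta_{0,n})$; everything else is a mechanical transcription of the proof of Lemma~\ref{glob:est} with the role of the constant $D$ played by the locally blowing-up function $f$.
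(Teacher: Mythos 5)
Your proposal is correct and follows essentially the same route as the paper, whose proof of this lemma is literally ``the same as in the global case, but with Corollary~\ref{est:sol} replaced by Lemma~\ref{loc:est}.'' In fact you supply more detail than the paper does on the one genuinely delicate point, namely the $n$-uniformity of $T_{max}$ and $f$ via the uniform bound on $\F(\bb u_n(0),\theta_{0,n})$ coming from the choice of approximate data in Definition~\ref{apdef} and the fact that $P_{V_n}$ commutes with $\Delta$.
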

	\begin{proof}
		In this case, the proof is the same as in global case, but instead of Corollary~\ref{est:sol}, we rely on Lemma~\ref{loc:est}.
	\end{proof}
	
	\subsection{Passing to the limit \texorpdfstring{$n\to\infty$}{n to infty}}
	In this section, we utilize the estimates obtained in the previous section for the approximate sequence $(\bb{u}_n,\theta_n)$, and then pass to the limit $n\to\infty$ to obtain the solution to the original problem.
	
	\begin{lem}\label{local:est}
		Let $(\bb{u}_n,\theta_n)$ be a global solution to the problem \eqref{app:mom}--\eqref{app:heat} obtained in Proposition~\ref{aproxex}. Then, there exists a solution $(\bb{u},\theta)$ in the sense of Definition $\ref{soldef}$ such that 
		\begin{equation}\label{weakex:alt}
			\begin{alignedat}{2}
				\bb{u}_n&\stackrel *{\rightharpoonup}\bb{u}           &&\quad \textrm{in }L_{\loc}^{\infty}([0,T);\htwo),\\
				\bb{u}_{n,t}&\stackrel *{\rightharpoonup}\bb{u}_t     &&\quad \textrm{in }L_{\loc}^{\infty}([0,T);\hom),\\
				\bb{u}_{n,tt}&\stackrel *{\rightharpoonup}\bb{u}_{tt} &&\quad \textrm{in }L_{\loc}^{\infty}([0,T);\ld),\\
				\theta_{n}&\stackrel{}{\rightharpoonup}\theta         &&\quad \textrm{in }H_{\loc}^1([0,T);\ld),\\
				\theta_{n}&\stackrel{}{\rightharpoonup}\theta         &&\quad \textrm{in }L_{\loc}^2([0,T);\htwo), \\
				\theta_n& \to \theta                                  &&\quad \textrm{in }L_{\loc}^{2}([0,T);\ld), \\
				\log\theta_n& \to \log\theta                            &&\quad \textrm{in }L_{\loc}^{2}([0,T);\ld) \\
			\end{alignedat}
		\end{equation}
		along some subsequence,
		where
		\begin{eqnarray*}
			&T = \infty& \text{ if } \quad \initcond, \\
			&T = T_{max} & \text{ otherwise},
		\end{eqnarray*}
		where $D$ and $T_{max}$ are defined in Lemma~\ref{glestlem} and Lemma~\ref{loc:est}, respectively.
		
	\end{lem}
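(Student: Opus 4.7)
The plan is to deduce the convergences in \eqref{weakex:alt} from the uniform bounds in Lemma~\ref{glob:est} (in the small-data case, with $T=\infty$) and Lemma~\ref{l:est} (in the large-data case, on $[0,T_{max})$) by a standard Banach--Alaoglu/Aubin--Lions procedure on each bounded time interval $[0,T]$ with $T<\infty$ (respectively $T<T_{max}$), then pass to the limit in the approximate formulation, and finally verify that the limit object is a solution in the sense of Definition~\ref{soldef}.

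First, I would extract a (diagonally chosen) subsequence giving every weak/weak-$*$ convergence listed in \eqref{weakex:alt} directly from the uniform bounds on $\bb{u}_n, \bb{u}_{n,t}, \bb{u}_{n,tt}$ and $\theta_n,\theta_{n,t}$. Then, since $\theta_n$ is uniformly bounded in $L^2(0,T;\htwo)\cap H^1(0,T;\ld)$, Aubin--Lions yields strong convergence $\theta_n\to\theta$ in $L^2(0,T;\hom)$, hence in $L^2(0,T;\ld)$, and up to a further subsequence a.e.\ on $(0,T)\times\tn$; similarly the bounds on $\bb{u}_n$ give strong convergence $\bb{u}_n\to\bb{u}$ in $C([0,T];\hom)$ and $\bb{u}_{n,t}\to\bb{u}_t$ in $C([0,T];\ld)$.

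The main issue is passing to the limit in the nonlinear term $\theta_n\div\bb{u}_{n,t}$ in the approximate heat equation \eqref{app:heat}. For this I would write, for each test function $\psi\in\hom$,
\[
\int_0^T\!\!\io \theta_n \psi \div\bb{u}_{n,t}\,dx\,dt = \int_0^T\!\!\io (\theta_n-\theta) \psi \div\bb{u}_{n,t}\,dx\,dt + \int_0^T\!\!\io \theta \psi \div\bb{u}_{n,t}\,dx\,dt,
\]
where the first term vanishes by strong $L^2$-convergence of $\theta_n$ combined with the $L^\infty(0,T;\ld)$-bound on $\div\bb{u}_{n,t}$, and the second one converges by the weak-$*$ convergence $\bb{u}_{n,t}\stackrel{*}{\rightharpoonup}\bb{u}_t$ in $L^\infty(0,T;\hom)$ (after noting $\theta\psi\in L^1(0,T;\ld)$). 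Passing to the limit in the wave equation \eqref{app:mom} is immediate since it is linear in $(\bb{u}_n,\theta_n)$; density of $\bigcup_n V_n$ in the appropriate test-space allows me to recover \eqref{mom:eq} and \eqref{heat:eq} a.e.

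The remaining step is to upgrade to the regularity stated in Definition~\ref{soldef} and to obtain the convergence of $\log\theta_n$. The entropy identity \eqref{taul2:app} provides the uniform $L^\infty(0,T;L^1(\tn))$-bound on $\log\theta_n$ and the $L^2(0,T;\ld)$-bound on $\nabla\log\theta_n$; combined with a.e.\ convergence of $\theta_n$ (and Fatou/dominated convergence using these bounds), I obtain $\log\theta_n\to\log\theta$ in $L^2_{\loc}([0,T);\ld)$ along a further subsequence, and in particular $\log\theta$ is well-defined almost everywhere (so $\theta>0$ a.e.). The attainment of initial data $(\bb{u}_0,\bb{v}_0,\theta_0)$ in $\hom\times\ld\times\ld$ follows from the continuity of the limit in time obtained above (via Aubin--Lions) together with $P_{V_n}\bb u_0\to\bb u_0$, $P_{V_n}\bb v_0\to\bb v_0$, and $\theta_{0,n}\to\theta_0$. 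The hard part is genuinely the compactness step needed to pass to the limit in the nonlinear source term of the heat equation, as the remaining limits are either linear or controlled by the uniform bounds already at hand.
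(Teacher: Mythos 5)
Your overall strategy coincides with the paper's: all weak and weak-$*$ convergences in \eqref{weakex:alt}$_1$--\eqref{weakex:alt}$_5$ come straight from the uniform bounds of Corollary~\ref{uni:est:apr} and Lemmas~\ref{glob:est}/\ref{l:est} via Banach--Alaoglu, and \eqref{weakex:alt}$_6$ from the compact embedding of $H^1_{\loc}([0,T);\ld)\cap L^2_{\loc}([0,T);\htwo)$ into $L^2_{\loc}([0,T);\ld)$. Your explicit treatment of the nonlinear term $\theta_n\div\bb{u}_{n,t}$ (splitting into a strongly-times-bounded piece and a weak-$*$ piece) is sound and in fact more detailed than what the paper records, modulo the small caveat that for $\psi\in\hom$ in $d=3$ the product $(\theta_n-\theta)\psi\div\bb{u}_{n,t}$ is not obviously integrable, so you should first take $\psi$ smooth (or use the interpolated strong convergence $\theta_n\to\theta$ in $L^2(0,T;L^3(\tn))$) and conclude by density.

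The one step where you genuinely deviate, and where your argument as written has a gap, is \eqref{weakex:alt}$_7$. Almost-everywhere convergence of $\theta_n$ gives $\log\theta_n\to\log\theta$ a.e.\ (on the set $\{\theta>0\}$, whose full measure you correctly extract from the entropy bound), but ``Fatou/dominated convergence using these bounds'' does not upgrade this to convergence in $L^2_{\loc}([0,T);\ld)$: there is no dominating function, and an $L^\infty(0,T;L^1(\tn))$ bound on $\log\theta_n$ together with a.e.\ convergence is compatible with failure of $L^2$ convergence. To close this you need equi-integrability of $|\log\theta_n|^2$ on compact space-time sets; this does follow from combining the $L^\infty(0,T;L^1(\tn))$ bound with the $L^2((0,T)\times\tn)$ bound on $\nabla\log\theta_n$ (Gagliardo--Nirenberg yields a uniform $L^q((0,T)\times\tn)$ bound for some $q>2$), after which Vitali's theorem gives the claim. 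The paper instead argues differently: it divides the heat equation by $\theta_n$ to get $\log\theta_{n,t}=\Delta\log\theta_n+|\nabla\log\theta_n|^2-\mu\div\bb{u}_{n,t}$, bounds this in $L^1(0,T;W^{-1,p}(\tn))$ for $p\in(1,3/2)$, and applies the Aubin--Lions lemma directly to $\log\theta_n$ (bounded in $L^2_{\loc}([0,T);H^1(\tn))$) to obtain strong $L^2$ and a.e.\ convergence to some limit $\tau$, which is then identified with $\log\theta$ pointwise. Either route works; yours is more elementary once the uniform-integrability step is supplied, while the paper's avoids interpolation at the price of invoking the equation for $\log\theta_n$.
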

	\begin{proof}
		Without indicating the switch to subsequences, we observe that convergences \eqref{weakex:alt}$_1$--\eqref{weakex:alt}$_5$ follow directly by the uniform bounds obtained in Corollary~\ref{uni:est:apr}, Lemma~\ref{glob:est}, Lemma~\ref{l:est}, while the strong convergence \eqref{weakex:alt}$_6$ is a consequence of compact imbedding of $H_{\loc}^1([0,T);\ld)\cap L_{\loc}^2([0,T);\htwo)$ into $L_{\loc}^{2}([0,T);\ld)$.
		
		Next, note that since $\log\theta_n \in L^\infty(0,T; L^1(\tn))$ and $\nabla\log\theta_n \in  L^2((0,T) \times \tn)$ by \eqref{taul2:app}, we have that $\log\theta_n \in L_{\loc}^2([0,T); H^1(\tn))$. Now, since $\theta_n\geq \hat{\theta}_n>0$, we can divide the heat equation $\eqref{app:heat}$ by $\theta_n$ to arrive at 
		\begin{eqnarray*}
			\log\theta_{n,t} = \Delta\log\theta_n + |\nabla\log\theta_n|^2 - \mu \div\bb{u}_{n,t} \in L^1(0,T;W^{-1,p}(\mathbb{T}^d))
		\end{eqnarray*}
		for $p\in (1,3/2)$, so by the Aubin--Lions lemma (see \cite{Boyer}), we have $\log\theta_n \to \tau$ in $L_{\loc}^{2}([0,T);\ld)$ and a.e.\ on $(0,T)\times \mathbb{T}^d$. In order to show $\tau=\log\theta$, observe that for a.a.\ $(t,x)\in (0,T)\times \mathbb{T}^d$ we have $\lim_{n\to\infty}\log\theta_n(t,x) = \tau(t,x)$ and $\lim_{n\to\infty}\theta_n(t,x) = \theta(t,x)$, so fixing such $(t,x)$ and calculating
		\begin{eqnarray*}
			\theta(t,x)=\lim_{n\to\infty} \theta_n(t,x)=\lim_{n\to\infty} e^{\log\theta_n(t,x)}= e^{\lim_{n\to\infty} \log\theta_n(t,x)} = e^{\tau(t,x)},
		\end{eqnarray*}
		$\eqref{weakex:alt}_7$  follows.
	\end{proof}

	\subsection{Uniqueness of solutions}
	The following result holds for both global-in-time and local-in-time solutions, as its validity depends solely on the regularity class of the solution.
	In particular, it entails the uniqueness claim in Theorem~\ref{main1} and hence together with Lemma~\ref{local:est} proves Theorem~\ref{main1}.
	\begin{lem}
		Let us fix the time of existence $T$ of solutions to \eqref{new:system} in the sense of Definition \ref{soldef}. Then, the solution satisfying \eqref{new:system} on $(0,T)$ in the regularity class described in Definition \ref{soldef} is unique.
	\end{lem}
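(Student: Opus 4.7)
The plan is to run a Gronwall argument on the natural energy of the differences. Given two solutions $(\bb u_i, \theta_i)$, $i=1,2$, with identical initial data, set $\bb U := \bb u_1 - \bb u_2$ and $\Theta := \theta_1 - \theta_2$. Subtracting the equations gives
\begin{equation*}
\bb U_{tt} - \Delta \bb U = -\mu \nabla \Theta, \qquad
\Theta_t - \Delta \Theta = -\mu \Theta \div \bb u_{1,t} - \mu \theta_2 \div \bb U_t,
\end{equation*}
with vanishing initial data. Testing the first equation by $\bb U_t$ and the second by $\Theta$ (both legitimate given the regularity in Definition~\ref{soldef}) and summing yields, for $E(t) := \frac{1}{2}\io (|\bb U_t|^2 + |\nabla \bb U|^2 + \Theta^2)\,dx$,
\begin{equation*}
E'(t) + \io |\nabla \Theta|^2\,dx = \mu \io (1-\theta_2)\, \Theta \,\div \bb U_t\,dx - \mu \io \Theta^2 \,\div \bb u_{1,t}\,dx.
\end{equation*}

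The crux is the mixed term $\mu \io (1-\theta_2)\Theta \div \bb U_t\,dx$: nothing in $E$ controls $\|\div \bb U_t\|_{\ld}$, so it has to be recast rather than estimated directly. An integration by parts in space gives
\begin{equation*}
\mu\io (1-\theta_2)\Theta \div \bb U_t\,dx = -\mu\io (1-\theta_2)\nabla\Theta \cdot \bb U_t\,dx + \mu \io \Theta \, \nabla\theta_2 \cdot \bb U_t\,dx,
\end{equation*}
shifting the spatial derivative off $\bb U_t$ onto $\Theta$ (absorbable by the parabolic dissipation on the left) or onto $\theta_2$ (whose regularity is available from Definition~\ref{soldef}).

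What remains is to bound the three resulting integrals by quantities of the form $\phi(t) E(t) + \epsilon \|\nabla\Theta\|_{\ld}^2$ with $\phi \in L^1_{\loc}([0,T))$. For $(1-\theta_2)\nabla\Theta \cdot \bb U_t$, H\"older yields $\|\theta_2\|_{L^\infty(\tn)}\|\nabla\Theta\|_{\ld}\|\bb U_t\|_{\ld}$, and $\|\theta_2\|_{L^\infty(\tn)}^2 \in L^1_{\loc}([0,T))$ follows from $\theta \in L^2_{\loc}([0,T); \htwo)$ together with the embedding $H^2 \hookrightarrow L^\infty$ valid in $d \leq 3$. For $\Theta \nabla\theta_2 \cdot \bb U_t$, I would use $\|\Theta\|_{L^6(\tn)} \|\nabla\theta_2\|_{L^3(\tn)}\|\bb U_t\|_{\ld}$ and observe that $\|\nabla\theta_2\|_{L^3(\tn)}^2 \in L^1_{\loc}([0,T))$ thanks to $H^2 \hookrightarrow W^{1,6}$ in $d\leq 3$. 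For $\Theta^2 \div \bb u_{1,t}$, a Gagliardo--Nirenberg interpolation $\|\Theta\|_{L^4(\tn)}^2 \leq C\|\Theta\|_{\ld}^{2(1-\eta)}\|\Theta\|_{\hom}^{2\eta}$ with $\eta = 1/2$ for $d=2$ and $\eta = 3/4$ for $d=3$, combined with the uniform bound $\|\div \bb u_{1,t}\|_{\ld} \in L^\infty_{\loc}([0,T))$, completes the estimate. After Young's inequality absorbs the $\|\nabla\Theta\|_{\ld}^2$ contributions into the left-hand dissipation and $\|\bb U_t\|_{\ld}^2, \|\Theta\|_{\ld}^2 \leq 2E$ are used, one arrives at $E'(t) \leq \phi(t) E(t)$ with $\phi \in L^1_{\loc}([0,T))$. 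Since $E(0) = 0$, Gronwall's lemma forces $E \equiv 0$, hence $\bb u_1 \equiv \bb u_2$ and $\theta_1 \equiv \theta_2$. The main obstacle is exactly this mixed term: without the integration-by-parts trick that trades the derivative on $\bb U_t$ for one on $(1-\theta_2)\Theta$, the parabolic smoothing of $\Theta$ cannot be brought to bear and the energy estimate fails to close.
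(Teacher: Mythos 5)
Your proof is correct, and while it shares the paper's overall scheme (subtract the two solutions, run a Gronwall estimate on a difference energy, and make the Gronwall weight locally integrable via the $L^2_{\loc}([0,T);\htwo)$ regularity of one of the temperatures), the device used to close the estimate is genuinely different. The paper first applies the divergence to the momentum equation and works with $m=\div\bb{u}$, keeps the coupling in the form $\int m_t\,\Delta\theta$, and therefore must also test the heat equation with $-\Delta\theta$ so that $\|\Delta\theta\|_{L^2((0,T)\times\tn)}^2$ appears as dissipation on the left-hand side and can absorb that term; this in turn forces a density/Galerkin justification of the test function, since $m_{tt}$ only lies in $L^2(0,T;H^{-1}(\tn))$. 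You stay at the first-order energy level and instead integrate by parts in the coupling term, trading the derivative on $\bb{U}_t$ for one on $\Theta$ (absorbed by the dissipation $\|\nabla\Theta\|_{\ld}^2$) or on $\theta_2$ (controlled via $H^2(\tn)\hookrightarrow L^\infty(\tn)\cap W^{1,6}(\tn)$ for $d\le 3$). This is somewhat more economical: it avoids the second-order parabolic estimate for the difference $\Theta$ and the duality argument, at the price of the extra term $\io\Theta\,\nabla\theta_2\cdot\bb{U}_t\,dx$, which you bound correctly. Only two routine points deserve an explicit sentence: testing the wave equation with $\bb{U}_t$ and the absolute continuity of $E$ are licensed by the regularity class of Definition~\ref{soldef}, and $E\equiv0$ yields $\bb{U}\equiv0$ only after invoking the vanishing initial datum $\bb{U}(0)=0$, which you do.
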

	\begin{proof}
		Let $(\bb{u}_1,\theta_1)$ and $(\bb{u}_2,\theta_2)$ be solutions of \eqref{new:system} in the sense of Definition \ref{soldef} and let us denote $\bb u:=\bb{u}_1-\bb{u}_2$ and $\theta:=\theta_1-\theta_2$. Let us subtract the equations for $(\bb{u}_2,\theta_2)$ from the equations for $(\bb{u}_1,\theta_1)$. We obtain
		\begin{align*}
			\begin{split}
				\bb{u}_{tt}-\Delta \bb{u} &=   -\mu\nabla\theta,\\
				\theta_t - \Delta \theta &=- \mu\theta_1 \div\bb{u}_{1,t}+\mu\theta_2\div \bb{u}_{2,t}.
			\end{split}
		\end{align*}
		In order to work with balanced estimates, let us apply the divergence to the first equation (or more precisely multiply with $\nabla \varphi$ and integrate over $(0,T)\times \tn$) to obtain the new system
		\begin{eqnarray}\label{equniq}
			\begin{cases}  \int_0^T \int_{\tn} m_{t}\varphi_t-\int_0^T \int_{\tn} \nabla m \cdot \nabla \varphi&=   \mu \int_0^T \int_{\tn} \Delta\theta \varphi+ \int_{\tn} m_t(T) \varphi(T),\\
				\theta_t - \Delta \theta &=- \mu\theta_1 m_{1,t}+\mu\theta_2 m_{2,t},
			\end{cases}
		\end{eqnarray}
		for all $\varphi \in C^1([0,T]\times \tn)$, where we have denoted $m=\div\bb{u}$ and $m_i=\div \bb{u}_i$. Note that here we used
		\begin{eqnarray*}
			\int_0^T \int_{\tn} \Delta \bb{u} \cdot \nabla \varphi = \int_0^T \int_{\tn} \nabla (\nabla \cdot \bb{u}) \cdot \nabla \varphi - \underbrace{\int_0^T \int_{\tn} \curl^2 \bb{u} \cdot \nabla \varphi}_{=0}.
		\end{eqnarray*}
		Since $\eqref{equniq}_1$ implies that $m_{tt}\in L^2(0,T; H^{-1}(\tn))$, we can test $\eqref{equniq}_1$ with $P_{V_k}m_t$ by the density argument and pass to the limit $k\to \infty$ to obtain
		\begin{align*}
			&\frac12\io |m_t(t)|^2dx +\frac12\io |\nabla m(t)|^2dx \leq -\mu \int_0^t\io m_t\Delta\theta dxd\tau\nonumber\\
			&\leq \varepsilon \|\Delta\theta\|_{L^2((0,T)\times \tn)}^2 + C \int_0^t \int_{\tn} |m_t|^2dxdt.
		\end{align*}
		Next, multiplying $\eqref{equniq}_2$ with $\theta$ and then with $-\Delta \theta$, integrating over $(0,t)\times\tn$ and summing up gives us
		\begin{eqnarray*}
			&&\frac12 \int_{\tn} \theta^2(t) + \|\nabla\theta\|_{L^2((0,T)\times \tn)}^2 + \frac12 \int_{\tn} |\nabla\theta|^2(t) + \|\Delta\theta\|_{L^2((0,T)\times \tn)}^2 \\
			&&\leq C\int_0^t \int_{\tn} (\theta_1 m_{1,t} - \theta_2 m_{2,t})^2 + \varepsilon\left( \|\theta\|_{L^2((0,T)\times \tn)}^2 + \|\Delta\theta\|_{L^2((0,T)\times \tn)}^2\right) 
		\end{eqnarray*}
		Now,
		\begin{eqnarray*}
			&&\int_0^t \int_{\tn} (\theta_1 m_{1,t} - \theta_2 m_{2,t})^2 =\int_0^t \int_{\tn} (\theta_1 m_{t} - \theta m_{2,t})^2   \\
			&&\leq 2\int_0^t \int_{\tn} \theta_1^2 m_t^2 + 2\int_0^t \int_{\tn} \theta^2 m_{2,t}^2 \\
			&& \leq 2\int_0^t \|\theta_1\|_{L^\infty(\tn)}^2  \|m_t\|_{L^2(\tn)}^2 +  2\int_0^t \underbrace{\|m_{2,t}\|_{L^2(\tn))}^2}_{\leq C}\|\theta\|_{L^\infty(\tn)}^2 \\
			&&\leq C\int_0^t \|\theta_1\|_{H^2(\tn)}^2  \|m_t\|_{L^2(\tn)}^2 + C \int_0^t \|\theta\|_{L^2(\tn)}^2+\varepsilon\left(\|\theta\|_{L^2((0,T)\times \tn)}^2 + \|\Delta\theta\|_{L^2((0,T)\times \tn)}^2 \right)
		\end{eqnarray*}
		by imbedding, where we bounded 
		\begin{eqnarray*}
			&&\|\theta\|_{L^\infty(\tn)}\leq C\|\theta\|_{H^{7/4}(\tn)} \leq C\|\theta\|_{L^2(\tn)}^{\frac18}\|\theta\|_{H^2(\tn)}^{\frac78}\leq C\|\theta\|_{L^2(\tn)} + \varepsilon\|\theta\|_{H^2(\tn)} \\
			&&\leq C\|\theta\|_{L^2(\tn)} + \varepsilon\left(\|\theta\|_{L^2(\tn)} + \|\Delta\theta\|_{L^2(\tn)}\right)
		\end{eqnarray*}
		by imbedding, interpolation and Young's inequality. Denoting 
		\begin{eqnarray*}
			f(t):=\frac12\io m_t^2(t)dx +\frac12\io |\nabla m(t)|^2dx + \frac12 \int_{\tn} \theta^2(t)
		\end{eqnarray*}
		and combining the above estimates, we obtain
		\begin{eqnarray*}
			f(t) \leq \int_0^t h(s)f(s)ds
		\end{eqnarray*}
		where
		\begin{eqnarray*}
			h(s):=C+C\|\theta_1\|_{H^2(\tn)}^2 \in L^1(0,T)
		\end{eqnarray*}
		so $f(t)=0$ by Gronwall's inequality. Consequently, $\theta = 0$ and now $\bb{u}$ solves a homogeneous wave equation with zero initial data which implies that $\bb{u}=0$, and the proof is finished.
	\end{proof}
	
	\section{Upper and lower bounds of temperature -- proof of Theorem $\ref{main2}$}\label{sec:upper_lower_bdd}
	
	We prove the following results using the Moser--Alikakos iteration, see \cite{alikakos, Moser}. In \cite{BC2} we developed a Moser-type estimate in 1D independently on initial data, however in 1D the Sobolev embeddings are much stronger. Let us also mention that in 1D a Moser--Alikakos type iteration was already used in the context of thermoelasticity in \cite{DH}.
	
	\begin{lem}\label{moser}
		Let $M > 0$.
		Then there exists $K = K(d, \tn, M) > 0$
		such that for all $f \in C^0((0, \infty) \times \tn)$ and all $z \in C^{1, 2}([0, \infty) \times \tn)$
		satisfying
		\begin{equation}\label{moser:pdi}
			z_t \le \Delta z + (z^++1) f, \quad \text{ in } (0, T)\times\tn, \\
		\end{equation}
		and
		\begin{align*}
			\|f\|_{L^\infty(0, \infty; \ld)} \le M, \quad
			\|z^+(0)\|_{L^\infty(\tn)} \le M
			\quad \text{and} \quad
			\|z^+\|_{L^\infty(0, \infty; L^1(\tn))} \le M,
		\end{align*}
		where $z^+\coloneqq \max\{z, 0\}$,
		the estimate
		\begin{align*}
			z \le K
			\quad \text{holds in $(0, \infty) \times \tn$}.
		\end{align*}
	\end{lem}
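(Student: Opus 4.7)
The plan is to run a Moser--Alikakos iteration producing uniform-in-time bounds on $\|z^+(t)\|_{L^{2^k}(\tn)}$ for $k=1,2,\ldots$ and taking the $L^{\infty}$-limit along this dyadic sequence. The three hypotheses enter as follows: $\|z^+\|_{L^{\infty}(0,\infty;L^1(\tn))}\le M$ supplies the seed $M_1\le M$; $\|f\|_{L^{\infty}(0,\infty;L^2(\tn))}\le M$ bounds the forcing at each iteration step; and $\|z^+(0)\|_{L^{\infty}(\tn)}\le M$ controls the initial value of each intermediate $L^p$ norm.

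First, for $p\ge 2$, I would test \eqref{moser:pdi} against $p(z^+)^{p-1}$ (a legitimate test function given $z\in C^{1,2}$ and $p\ge 2$) and integrate by parts on $\tn$ to obtain
\begin{align*}
\frac{d}{dt}\int_\tn(z^+)^p\,dx+\frac{4(p-1)}{p}\int_\tn|\nabla(z^+)^{p/2}|^2\,dx\le p\int_\tn f\,(z^++1)(z^+)^{p-1}\,dx.
\end{align*}
The right-hand side is bounded via Cauchy--Schwarz using $\|f(t)\|_{L^2}\le M$, after which the arising $L^{2p}$ and $L^{2(p-1)}$ norms are interpolated between $L^p$ and a higher Lebesgue exponent delivered by the Sobolev embedding $H^1(\tn)\hookrightarrow L^{2^*}(\tn)$ applied to $(z^+)^{p/2}$ (with $2^*=6$ for $d=3$; any finite exponent for $d=2$). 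Young's inequality absorbs the top-order contributions into the dissipation, leaving a polynomial-in-$p$ constant $\widetilde C(p,M)$ multiplying $\int_\tn(z^+)^p\,dx$.

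The main obstacle is upgrading this to a bound \emph{uniform in} $t$, since the naive ODI only yields exponentially growing estimates. For this I would apply the Gagliardo--Nirenberg inequality
\begin{align*}
\|v\|_{L^2(\tn)}^{2(1+2/d)}\le C\bigl(\|\nabla v\|_{L^2(\tn)}^2+\|v\|_{L^2(\tn)}^2\bigr)\|v\|_{L^1(\tn)}^{4/d}
\end{align*}
with $v=(z^+)^{p/2}$. Setting $Y_p(t):=\int_\tn(z^+)^p\,dx$ and $M_{p/2}:=\sup_{t\ge 0}\|z^+(t)\|_{L^{p/2}(\tn)}$ (known from the previous iteration step), this converts the dissipation into the super-linear lower bound $\int_\tn|\nabla(z^+)^{p/2}|^2\,dx\ge c\,M_{p/2}^{-2p/d}\,Y_p^{1+2/d}-Y_p$, and combining with the previous step produces an ODI of the form
\begin{align*}
Y_p'(t)+c\,M_{p/2}^{-2p/d}\,Y_p(t)^{1+2/d}\le \widetilde C(p,M)\,Y_p(t).
\end{align*}
Its super-linear damping forces $Y_p(t)\le\max\bigl\{Y_p(0),\,(\widetilde C(p,M)/c)^{d/2}M_{p/2}^{p}\bigr\}$ for all $t>0$, and $Y_p(0)\le|\tn|M^p$ thanks to the hypothesis $\|z^+(0)\|_{L^{\infty}(\tn)}\le M$.

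Taking $p$-th roots yields the Moser recursion
\begin{align*}
M_p\le\max\bigl\{|\tn|^{1/p}M,\,(\widetilde C(p,M)/c)^{d/(2p)}M_{p/2}\bigr\}.
\end{align*}
Iterating along $p_k=2^k$ starting from $M_1\le M$ and using the polynomial growth of $\widetilde C(p,M)$ in $p$, the product $\prod_{j\ge 1}(\widetilde C(2^j,M)/c)^{d/2^{j+1}}$ converges, giving a uniform bound $\sup_k M_{2^k}\le K$ for some $K=K(d,\tn,M)$. Since $\|z^+(t)\|_{L^{\infty}(\tn)}=\lim_{k\to\infty}\|z^+(t)\|_{L^{2^k}(\tn)}\le K$ for every $t$, the claim $z\le K$ on $(0,\infty)\times\tn$ follows.
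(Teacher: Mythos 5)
Your proposal is correct and follows essentially the same Moser--Alikakos iteration as the paper: testing with powers of $z^+$, using the Gagliardo--Nirenberg inequality against the $L^1$-norm of the previous iterate (seeded by the hypotheses on $z^+(0)$, $\|z^+\|_{L^\infty L^1}$ and $\|f\|_{L^\infty L^2}$) to obtain time-uniform $L^{2^k}$ bounds, and closing via a recursion whose constants have summable logarithms. The only (harmless) difference is that you extract the uniform-in-time bound from a superlinearly damped ODI $Y_p'+cM_{p/2}^{-2p/d}Y_p^{1+2/d}\le \widetilde C Y_p$, whereas the paper applies Gagliardo--Nirenberg a second time to produce a linearly damped ODI and solves that instead.
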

	\begin{proof}
		For $n\in\n_0$, we multiply \eqref{moser:pdi} by $(z^+)^{2^{n+1}-1}$ and integrate over $\tn$. It yields
		\begin{align}\label{in1supth}
			&\frac1{2^{n+1}}\ddt\io (z^+)^{2^{n+1}}dx+\left(\frac1{2^{n-1}}-\frac1{4^n}\right)\io|\nabla((z^+)^{2^n})|^2dx \nonumber \\
			&\leq \io ((z^+)^{2^{n+1}} + (z^+)^{2^{n+1}-1}) f dx 
			\leq M (1 + 2\|(z^+)^{2^n}\|_{L^4(\tn)}^2).
		\end{align}
		Here, we use the Gagliardo-Nirenberg inequality
		\begin{align*}
			\|\varphi\|_{L^4(\tn)}^2\leq C\|\nabla \varphi\|_{\ld}^{2\eta}\|\varphi\|_{L^1(\tn)}^{2(1-\eta)}+C\|\varphi\|^2_{L^1(\tn)}
			\quad \text{for all $\varphi \in \hom$}
		\end{align*}
		where $\eta=\frac34$ for $d=2$ and $\eta=\frac9{10}$ for $d=3$. We apply the Young inequality to the above formula and arrive at
		\begin{align}\label{moser_gni}
			\|\varphi\|_{L^4(\tn)}^2\leq \epsilon\|\nabla \varphi\|_{\ld}^{2}+C_\epsilon \epsilon^{-\alpha}\|\varphi\|_{L^1(\tn)}^2
			\quad \text{for all $\varphi \in \hom$}
		\end{align}
		where $\alpha=3$ for $d=2$ and $\alpha=9$ for $d=3$. Let us denote $a_n\coloneqq\frac1{2^{n}}-\frac1{2\cdot4^n}$.
		We choose $\epsilon=\frac{a_n}{2M}$ and $\varphi = (z^+)^{2^n}$. Plugging the above inequality into \eqref{in1supth}, we get
		\begin{align*}
			\frac1{2^{n+1}}\ddt\io (z^+)^{2^{n+1}}dx+a_n\io|\nabla((z^+)^{2^n})|^2dx\leq C (1 + a_n^{-\alpha}\|(z^+)^{2^n}\|_{L^1(\tn)}^2).
		\end{align*}
		Since $a_n > 2^{-n}$, it leads us to
		\begin{align*}
			\ddt\io (z^+)^{2^{n+1}}dx+\io|\nabla((z^+)^{2^n})|^2dx\leq C 2^{n+\alpha n}(1 + \|(z^+)^{2^n}\|_{L^1(\tn)}^2).
		\end{align*}
		Applying \eqref{moser_gni} once more yields
		\begin{align*}
			\ddt \|z^+\|_{L^{2^{n+1}}(\tn)}^{2^{n+1}}+C_1\|z^+\|_{L^{2^{n+1}}(\tn)}^{2^{n+1}}\leq C_2 2^{n+\alpha n}(1+\|z^+\|_{L^{2^n}(\tn)}^{2^{n+1}}).
		\end{align*}
		Solving the above differential inequality we get
		\begin{align*}
			\|z^+\|_{L^{\infty}(0,T;L^{2^{n+1}}(\tn))}^{2^{n+1}}
			&\leq 2^{n+\alpha n}C(1+\|z^+\|_{L^{\infty}(0,T;L^{2^{n}}(\tn))}^{2^{n+1}})
			+ \|z_0^+\|_{L^{2^{n+1}}(\tn)}^{2^{n+1}}\\
			&\leq 2^{n+\alpha n}C(1+\|z^+\|_{L^{\infty}(0,T;L^{2^{n}}(\tn))}^{2^{n+1}})
			+ \|z_0^+\|_{L^{\infty}(\tn)}^{2^{n+1}},
		\end{align*}
		where $z_0 \coloneqq z(0)$.
		
		Let us denote $A_n \coloneqq \max\{1,\|z^+\|_{L^{\infty}(0,T;L^{2^{n}}(\tn))},\|z_0^+\|_{L^{\infty}(\tn)}\}$ for $n \in \mathbb N_0$.
		Then, the above inequality can be rewritten as follows:
		\begin{align*}
			A_{n+1}^{2^{n+1}}\leq 2^{n+\alpha n}C + 2^{n+\alpha n}C A_n^{2^{n+1}} + 2^{n+\alpha n}C \le 2^{n+\alpha n}C A_n^{2n+1}
		\end{align*}
		It gives us
		\begin{align*}
			A_{n+1}\leq C^{2^{-(n+1)}}2^{(n+\alpha n)2^{-(n+1)}}A_n
		\end{align*}
		for all $n\in\n$. Solving this recursive inequality yields
		\begin{align*}
			A_n\leq \prod_{k=2}^n C^{2^{-k}}2^{(1+\alpha)(k-1)2^{-k}}A_1\leq A_1C^{\sum_{k=2}^{\infty}2^{-k}}2^{(1+\alpha)\sum_{k=2}^{\infty} (k-1)2^{-k}}.
		\end{align*}
		Because both series on the right-hand side of the above inequality are convergent and as $A_1 \le \max\{1, M\}$ by assumption, we get
		\begin{align*}
			\|z^+\|_{L^{\infty}(0,T;L^{2^{n+1}}(\tn))}\leq C.
		\end{align*}
		Passing to the limit $n\to\infty$, the conclusion follows (see \cite[Theorem~2.14]{adams}).
	\end{proof}
	
	Next, we apply Lemma~\ref{moser} to $z = \theta$ and $z = -\log \theta$ to obtain upper and lower bounds for the temperature.
	\begin{lem}\label{upbofortemp}
		Let $(\bb u, \theta)$ be a global solution obtained in Theorem~\ref{main1} and assume that $\theta_0\in L^\infty(\mathbb{T}^d)$.
		Then there is $C > 0$ such that
		\begin{align*}
			\|\theta\|_{L^{\infty}((0,T)\times\tn)}\leq C.
		\end{align*}
	\end{lem}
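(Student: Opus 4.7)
The strategy is to apply the Moser--type Lemma~\ref{moser} with $z=\theta$, which requires massaging the heat equation into the form $z_t\le \Delta z+(z^++1)f$. Since the global solution constructed in Theorem~\ref{main1} satisfies $\theta>0$ (so $\theta^+=\theta$), the heat equation \eqref{heat:eq} can be rewritten as
\begin{align*}
\theta_t=\Delta\theta-\mu\theta\div \bb u_t\le \Delta\theta+(\theta^++1)\,\mu|\div \bb u_t|,
\end{align*}
so the candidate is $f:=\mu|\div \bb u_t|$.

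Next I would verify the three quantitative hypotheses of Lemma~\ref{moser}, each using a bound already established in the paper. First, $\|f\|_{L^\infty(0,\infty;\ld)}\le \mu\|\nabla \bb u_t\|_{L^\infty(0,\infty;\ld)}\le C$, by estimate \eqref{t1glest} of Lemma~\ref{glob:est}, which is crucially independent of the time horizon $T$. Second, $\|\theta^+(0)\|_{L^\infty(\tn)}=\|\theta_0\|_{L^\infty(\tn)}$ is finite by assumption. Third, the total energy balance \eqref{balance:smooth} (or \eqref{balance:app} at the approximate level) gives $\|\theta\|_{L^\infty(0,\infty;L^1(\tn))}\le C$, since all three terms on its left-hand side are nonnegative and the right-hand side is a finite constant depending only on the initial data.

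The final issue is that Lemma~\ref{moser} is phrased for classical regularity ($f\in C^0$, $z\in C^{1,2}$), whereas $\theta$ itself only enjoys the regularity of Definition~\ref{soldef}. I would therefore run the argument at the approximate level: by Remark~\ref{approx_smooth}, $\theta_n\in C([0,T];C^{2,\alpha}(\tn))$ and $\bb u_n\in C^2([0,T];V_n)$, so $\div \bb u_{n,t}$ is smooth in space and continuous in time. All three hypotheses then hold uniformly in $n$ (the $L^2$-bound on $\div \bb u_{n,t}$ from Lemma~\ref{glob:est}, the energy bound from Proposition~\ref{aproxex}, and the $L^\infty$-bound on $\theta_{0,n}$, which follows from the construction of the regularization, e.g.\ choosing $\theta_{0,n}$ so that $\|\theta_{0,n}\|_{L^\infty(\tn)}\le \|\theta_0\|_{L^\infty(\tn)}+1$). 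Applying Lemma~\ref{moser} yields $\|\theta_n\|_{L^\infty((0,\infty)\times\tn)}\le K$ with $K$ independent of $n$, and the a.e.\ convergence $\theta_n\to\theta$ granted by \eqref{weakex:alt} transfers the bound to the limit.

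The main obstacle is really just the bookkeeping needed to check that the constant in Lemma~\ref{moser} is $n$-uniform; this reduces to verifying that the ingredients feeding into (i)--(iii) are themselves uniform in $n$, which is already built into Lemma~\ref{glob:est} and Corollary~\ref{uni:est:apr}. No genuinely new estimate is required beyond recognizing that the smallness-of-data hypothesis \eqref{initcond} was only needed to bound $\nabla \bb u_t$, not for the iteration itself.
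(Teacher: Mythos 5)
Your proposal is correct and follows essentially the same route as the paper: apply Lemma~\ref{moser} to the approximate solutions with $z=\theta_n$ and $f=\mu|\div\bb u_{n,t}|$, verify the hypotheses via the $n$- and $T$-uniform bounds of Lemma~\ref{glob:est} (the paper cites \eqref{t4glest} where you invoke the energy balance for the $L^1$ bound, an immaterial difference), and transfer the resulting $n$-independent bound to $\theta$ by the a.e.\ convergence from Lemma~\ref{local:est}. Your explicit handling of the regularity of the approximants and of $\|\theta_{0,n}\|_{L^\infty}$ fills in details the paper leaves implicit.
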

	\begin{proof}
		For all $n \in \mathbb N$, the solution $(\bb u_n, \theta_n)$ of the approximate problem obtained in Proposition~\ref{aproxex}
		fulfills \eqref{moser:pdi} with $z \coloneqq \theta_n$ and $f \coloneqq \mu |\div \bb u_{n, t}|$.
		Due to \eqref{t1glest}, \eqref{t4glest} and the assumed boundedness of $\theta_0$,
		we may apply Lemma~\ref{moser} to obtain $\theta_n \le C$ for some $C > 0$ not depending on $n$.
		Since $\theta_n \to \theta$ a.e.\ along some subsequence by Lemma~\ref{local:est} and as $\theta \ge 0$ a.e.,
		this entails the claim.
	\end{proof}
	
	\begin{lem}\label{thlowbotemp}
		Let $(\bb u, \theta)$ be a global solution obtained in Theorem~\ref{main1} and assume that $\theta_0 \ge \underline \theta$ for some $\underline \theta > 0$.
		Then there is $C > 0$ such that
		\begin{align*}
			\theta \ge C \quad \text{a.e.\ on $(0, \infty) \times \tn$}.
		\end{align*}
	\end{lem}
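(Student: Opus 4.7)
The natural strategy is to apply Lemma~\ref{moser} to $z = -\log\theta$: a uniform upper bound on this quantity translates directly into a uniform lower bound on $\theta$. I would work at the approximate level, where the strictly positive lower bound $\theta_n \ge \hat\theta_n > 0$ provided by Proposition~\ref{aproxex} ensures that $\log\theta_n$ is well-defined and, by Remark~\ref{approx_smooth}, enjoys the $C^{1,2}$-regularity required by Lemma~\ref{moser}.

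First, dividing the approximate heat equation $\theta_{n,t}-\Delta\theta_n = -\mu\theta_n\div\bb{u}_{n,t}$ by $\theta_n$ and using the identity $\Delta\log\theta_n = \tfrac{\Delta\theta_n}{\theta_n} - |\nabla\log\theta_n|^2$ yields
\begin{align*}
  (\log\theta_n)_t = \Delta\log\theta_n + |\nabla\log\theta_n|^2 - \mu\div\bb{u}_{n,t}.
\end{align*}
Setting $z_n := -\log\theta_n$ and dropping the nonnegative term $|\nabla z_n|^2$ on the right gives
\begin{align*}
  z_{n,t} \le \Delta z_n + \mu|\div\bb{u}_{n,t}| \le \Delta z_n + (z_n^+ + 1)\,\mu|\div\bb{u}_{n,t}|,
\end{align*}
which matches \eqref{moser:pdi} with $f_n := \mu|\div\bb{u}_{n,t}|$.

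Next, I would verify the three boundedness hypotheses of Lemma~\ref{moser} uniformly in $n$. The estimate $\|f_n\|_{L^\infty(0,\infty;L^2(\tn))} \le C$ follows at once from \eqref{t1glest}. Choosing the approximations so that $\theta_{0,n} \ge \underline\theta$ (which is compatible with Definition~\ref{apdef}, since mollification on the torus preserves an essential infimum bound and one may then add a small positive shift), one has $z_n^+(0) \le \max\{0,-\log\underline\theta\}$ in $L^\infty(\tn)$. Finally, $z_n^+ \le |\log\theta_n|$ combined with \eqref{uni:est4} yields the uniform $L^\infty(0,\infty;L^1(\tn))$ bound on $z_n^+$.

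Lemma~\ref{moser} then provides $K > 0$, independent of $n$, with $-\log\theta_n \le K$, i.e.\ $\theta_n \ge e^{-K}$. Since $\theta_n \to \theta$ almost everywhere on $(0,\infty)\times\tn$ along a subsequence by Lemma~\ref{local:est}, passing to the limit gives $\theta \ge e^{-K}$ a.e., which is the desired conclusion. The only subtlety, rather than a genuine obstacle, is ensuring that the approximate initial data $\theta_{0,n}$ inherit the lower bound $\underline\theta$ from $\theta_0$; once this is arranged, the argument is a direct application of the tools already developed.
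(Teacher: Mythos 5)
Your proposal is correct and follows essentially the same route as the paper: drop the $-|\nabla z|^2$ term in the equation for $z=-\log\theta_n$, apply Lemma~\ref{moser} with $f=\mu|\div\bb u_{n,t}|$ using \eqref{t1glest}, \eqref{uni:est4} and the lower bound on $\theta_0$, and transfer the resulting $n$-independent bound to $\theta$ via the a.e.\ convergence from Lemma~\ref{local:est}. The extra care you take in arranging $\theta_{0,n}\ge\underline\theta$ for the approximate data is a detail the paper leaves implicit, but it is the same argument.
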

	\begin{proof}
		Again, we denote the solution of the approximate problems obtained in Proposition~\ref{aproxex} by $(\bb u_n, \theta_n)$, $n \in \mathbb N$.
		Since $\theta_n$ is positive by Proposition~\ref{aproxex},
		$z \coloneqq -\log \theta_n$ fulfills $z_t = \Delta z - |\nabla z|^2 + \mu \div \bb u_{n, t}$ in $(0, \infty) \times \tn$
		and hence \eqref{moser:pdi} for $f \coloneqq \mu |\div \bb u_{n, t}|$.
		By \eqref{t1glest}, \eqref{uni:est4}, the assumed lower bound for $\theta_0$ and Lemma~\ref{moser},
		we conclude $-\log \theta_n \le C$ and hence $\theta_n \ge e^{-C}$ for some $n$-independent $C$.
		Thanks to Lemma~\ref{local:est}, the bound carries over to $\theta$.
	\end{proof}
	
	Finally, we can directly conclude Theorem~\ref{main2} from the previous two lemmata.
	
	\section{Time-asymptotics of solutions -- proof of Theorem $\ref{main3}$}\label{czwarta}
	
	In this section, we analyze the time-asymptotic behavior of the obtained solutions. We begin by recalling that Theorem \ref{main1} guarantees the existence of global solutions, provided that the initial data satisfy
	\begin{align}\label{initcondasymp}
		\initcond
	\end{align}
	as well as upper and lower bounds for the initial temperature.
	
	A key observation is that the displacement naturally decomposes into two components: one that exhibits persistent oscillations and another that flattens out due to heat dissipation. Specifically, the divergence-free part of the displacement remains oscillatory, while the curl-free part is dampened by thermal effects. This decomposition arises naturally from the Helmholtz decomposition, which allows us to separate the displacement equation into two distinct parts: a homogeneous wave equation governing the divergence-free component and a strongly coupled system describing the evolution of the curl-free component under the influence of heat propagation.
	
	Ultimately, the curl-free component behaves analogously to a vibrating string in the one-dimensional thermoelasticity model (see \cite{BC2}), while the primary asymptotic distinction in higher dimensions arises from the divergence-free component, which retains its oscillatory nature.

	\subsection{Decomposition of solutions}\label{Sec:Helmholtz}
	In this subsection, we recall the Helmholtz decomposition on the torus. Although both the statement and its proof can be found in the literature, we provide them here for completeness.
	
	\begin{tw}\label{thdecomp}
		Let us assume that $\bb v\in H^k(\tn)$ is a vector field, where $k\in\n\cup\{0\}$. Then, $\bb v$ can be decomposed uniquely as
		\begin{align*}
			\bb v=H\bb v+\hp\bb  v,
		\end{align*}
		where $H\bb v, \hp \bb v\in H^k(\tn)$ satisfy
		\begin{equation*}
			\div H\bb v=0
		\end{equation*}
		and there exists function $\phi\in H^{k+1}(\tn)$ such that
		\begin{align*}
			\io \phi\, dx=0\quad\textrm{ and }\quad\hp\bb v=\nabla\phi.
		\end{align*}
		Moreover, $H\bb v$ and $\hp\bb v$ are orthogonal in $L^2(\tn)$
		and there exists $C = C(d, k) > 0$ such that
		\begin{equation*}
			\|H\bb v\|_{H^k(\tn)} \le C \|\bb v\|_{H^k(\tn)} \quad \text{and} \quad \|\hp \bb v\|_{H^K(\tn)} \le C \|\bb v\|_{H^K(\tn)}.
		\end{equation*}
	\end{tw}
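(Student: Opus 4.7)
The approach is to perform the decomposition frequency by frequency via Fourier series on $\tn$. Writing $\bb v(x)=\sum_{\xi\in\mathbb{Z}^d} \hat{\bb v}(\xi) e^{2\pi i\xi\cdot x}$ with $\hat{\bb v}(\xi)\in\mathbb{C}^d$, the key algebraic observation is that for each $\xi\neq 0$, the space $\mathbb{C}^d$ splits orthogonally into the line $\mathbb{C}\xi$ and its orthogonal complement $\xi^\perp$. Since the Fourier transform turns $\div$ into multiplication by $2\pi i\xi$, this pointwise orthogonal split corresponds exactly to separating a divergence-free piece from a gradient piece.

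Concretely, for $\xi\neq 0$ I would set
\[
(\widehat{H\bb v})(\xi):=\hat{\bb v}(\xi)-\frac{\hat{\bb v}(\xi)\cdot\xi}{|\xi|^2}\xi,\quad (\widehat{\hp\bb v})(\xi):=\frac{\hat{\bb v}(\xi)\cdot\xi}{|\xi|^2}\xi,\quad \hat\phi(\xi):=\frac{\hat{\bb v}(\xi)\cdot\xi}{2\pi i|\xi|^2},
\]
and $(\widehat{H\bb v})(0):=\hat{\bb v}(0)$, $(\widehat{\hp\bb v})(0):=0$, $\hat\phi(0):=0$. These are orthogonal projections in $\mathbb{C}^d$ (up to the rescaling defining $\hat\phi$), so the Plancherel identity $\|\bb v\|_{H^k(\tn)}^2\sim \sum_\xi (1+|\xi|^2)^k|\hat{\bb v}(\xi)|^2$ immediately yields the $H^k$ bounds for $H\bb v$ and $\hp\bb v$, and the extra factor $|\xi|^{-1}$ in $\hat\phi$ places $\phi$ in $H^{k+1}(\tn)$ with the desired estimate. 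The identities $\nabla\phi=\hp\bb v$ and $\div H\bb v=0$ then follow by inspection (since $\xi\cdot(\widehat{H\bb v})(\xi)=0$), and the normalization $\int_{\tn}\phi\,dx=\hat\phi(0)=0$ holds by construction.

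For the $L^2$-orthogonality, Parseval reduces the claim to the pointwise identity $(\widehat{H\bb v})(\xi)\cdot\overline{(\widehat{\hp\bb v})(\xi)}=0$, which is immediate by design. For uniqueness, suppose $\bb v=\bb w_1+\nabla\psi_1=\bb w_2+\nabla\psi_2$ with $\div\bb w_i=0$ and $\int_{\tn}\psi_i\,dx=0$; then $\nabla(\psi_1-\psi_2)=\bb w_2-\bb w_1$ is simultaneously a gradient and divergence-free, so $\Delta(\psi_1-\psi_2)=0$, and a zero-mean harmonic function on $\tn$ must vanish. No real obstacle arises, since the whole argument reduces to an orthogonal projection on each Fourier mode; the only mildly delicate point is the case $k=0$, where $\div\bb v$ is defined only distributionally and so the alternative PDE route of solving $-\Delta\phi=-\div\bb v$ via elliptic regularity is less convenient than the Fourier construction adopted here.
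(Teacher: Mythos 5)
Your proof is correct, and the decomposition you construct is in fact identical to the paper's: the paper defines $\hp\bb v=\nabla\phi$ where $\phi$ is the mean-zero weak solution of $\Delta\phi=\div\bb v$, and your Fourier coefficients $\hat\phi(\xi)=\frac{\hat{\bb v}(\xi)\cdot\xi}{2\pi i|\xi|^2}$ are exactly the Fourier-side solution of that Poisson problem. The difference is purely in how the estimates are obtained. The paper solves the elliptic problem abstractly and cites elliptic regularity theory (\cite[Section 6.3]{evans}) to get $\|\phi\|_{H^{k+1}}\le c_1\|\bb v\|_{H^k}$, then verifies orthogonality by a single integration by parts. You instead diagonalize everything via Plancherel, which makes the projection property, the $H^k$ bounds with constant $1$ (up to norm equivalence), the gain of one derivative for $\phi$, and the mode-by-mode orthogonality all immediate and self-contained — no external regularity theorem is needed, and, as you note, the case $k=0$ (where $\div\bb v$ only lives in $H^{-1}(\tn)$) causes no discomfort. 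The trade-off is that the Fourier argument is tied to the torus, whereas the paper's PDE formulation is the one that generalizes to bounded domains with boundary conditions, which the authors flag as future work. One small point worth making explicit in your write-up: in checking $L^2$-orthogonality via Parseval you need $\overline{(\widehat{\hp\bb v})(\xi)}$ to remain parallel to $\xi$, which holds precisely because $\xi\in\mathbb{Z}^d$ is real; with that remark the argument is complete.
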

	\begin{proof}
		Let $\phi$ be the unique weak solution of the following problem
		\begin{align*}
			\Delta\phi=\div \bb v\quad\textrm{ and }\quad\io \phi\, dx=0.
		\end{align*}
		Due to elliptic regularity theory (see \cite[Section 6.3]{evans}), we have $\phi\in H^{k+1}(\tn)$ and there exists $c_1 > 0$ depending only on $d$ and $k$ such that $\|\phi\|_{H^{k+1}(\tn)} \le c_1 \|\bb v\|_{H^k(\tn)}$.
		We define
		\begin{align*}
			H\bb v:=\bb v-\nabla\phi,\quad \hp \bb v:=\nabla\phi.
		\end{align*}
		Then, the defined functions verify the conditions in the claim. In particular,
		\[
		\io H\bb v \nabla \phi dx= -\io \div H\bb v \phi dx=0.
		\qedhere
		\]
	\end{proof}
	Next, let us apply the Helmholtz decomposition to \eqref{new:system}. Namely, we will show that the problem divides into two different ones.
	\begin{tw}\label{twdecompsol}
		Let $(\bb u, \theta)$ be a global solution of \eqref{new:system}. Then, $\boldsymbol\nu:=H\bb u$ solves the following problem
		\begin{equation}\label{eqdecnondiv}
			\begin{cases}
				{\boldsymbol\nu}_{tt}-\Delta \boldsymbol\nu =0,& \quad \text{ in } (0, \infty)\times \tn,\\
				{\boldsymbol\nu}(0,\cdot)={\boldsymbol\nu}_0,~ {\boldsymbol\nu}_t(0,\cdot) = \tilde{\boldsymbol\nu}_0,&
			\end{cases}
		\end{equation}
		where $\boldsymbol\nu_0:=H\bb{u}_0$ and $\tilde{\boldsymbol\nu}_0:=H\bb{v}_0$. On the other hand, $(\boldsymbol\chi, \theta)$, where $\boldsymbol\chi:=\hp \bb u$ is a solution of the following system
		\begin{equation}\label{eqdecnoncurl}
			\begin{cases}
				{\boldsymbol\chi}_{tt}-\Delta \boldsymbol\chi =   -\mu\nabla\theta ,& \quad \text{ in } (0,\infty)\times \tn,\\
				\theta_t - \Delta \theta =- \mu\theta \div{\boldsymbol\chi}_t,& \quad \text{ in }  (0,\infty)\times\tn,\\
				{\boldsymbol\chi}(0,\cdot)={\boldsymbol\chi}_0,~ {\boldsymbol\chi}_t(0,\cdot) = \tilde{\boldsymbol\chi}_0,~ \theta(0,\cdot)=\theta_0>0,&
			\end{cases}
		\end{equation}
		where $\boldsymbol\chi_0:=\hp \bb{u}_0$ and  $\tilde{\boldsymbol\chi}_0:=\hp \bb{v}_0$. In addition, $\boldsymbol\nu$ and $\boldsymbol\chi$ attain the same regularity as $\bb u$. Moreover, we have the following decomposition of the energy
		\begin{align*}
			\ul\io \boldsymbol\nu_t^2dx+\ul\io |\nabla \boldsymbol\nu|^2dx=\ul\io \tilde{\boldsymbol\nu}_0^2dx+\ul\io |\nabla \boldsymbol\nu_0|^2dx
		\end{align*}
		and
		\begin{align*}
			\ul\io \boldsymbol\chi_t^2dx+\ul\io |\nabla \boldsymbol\chi|^2dx+\io\theta\, dx=\ul\io \tilde{\boldsymbol\chi}_0^2dx+\ul\io |\nabla \boldsymbol\chi_0|^2dx+\io\theta_0\, dx.
		\end{align*}
	\end{tw}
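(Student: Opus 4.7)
The plan is to apply the two Helmholtz projections $H$ and $H^\perp$ from Theorem~\ref{thdecomp} to the momentum equation $\bb u_{tt}-\Delta \bb u=-\mu\nabla\theta$ and to exploit two facts: (i) the projections commute with both $\partial_t$ and $\Delta$ (on $\tn$ they are Fourier multipliers, so commutation with any constant coefficient differential operator is immediate; alternatively, for $H$ this follows because $\div (\partial_t \bb u)=\partial_t \div \bb u$ and the defining Poisson problem in the proof of Theorem~\ref{thdecomp} depends linearly on $\bb v$); (ii) for any scalar $\theta$ with $\io \theta\, dx$ finite one has $H(\nabla\theta)=0$ (because $\nabla\theta$ has mean zero on $\tn$ and is already a gradient), hence equivalently $\hp(\nabla\theta)=\nabla\theta$. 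These observations do essentially all the work.

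\textbf{The divergence-free part.} First I would apply $H$ to the momentum equation. Using commutation of $H$ with $\partial_t^2$ and $\Delta$ together with $H(\nabla\theta)=0$ yields $\boldsymbol\nu_{tt}-\Delta\boldsymbol\nu=0$. The initial data in \eqref{eqdecnondiv} follow because $H$ is continuous on $\ld$ and $\hom$ (by the bound in Theorem~\ref{thdecomp}), so $\boldsymbol\nu(0,\cdot)=H\bb u(0,\cdot)=H\bb u_0=\boldsymbol\nu_0$ and $\boldsymbol\nu_t(0,\cdot)=H\bb v_0=\tilde{\boldsymbol\nu}_0$. The regularity assertion for $\boldsymbol\nu$ is the direct image of the regularity of $\bb u$ from Definition~\ref{soldef} under the bounded operator $H\colon H^k(\tn)\to H^k(\tn)$.

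\textbf{The curl-free part.} Applying $\hp$ to the momentum equation and using $\hp(\nabla\theta)=\nabla\theta$, I obtain $\boldsymbol\chi_{tt}-\Delta\boldsymbol\chi=-\mu\nabla\theta$. For the heat equation in \eqref{eqdecnoncurl}, write $\bb u_t=\boldsymbol\nu_t+\boldsymbol\chi_t$; since $\div\boldsymbol\nu_t=\partial_t\div H\bb u=0$, we have $\div\bb u_t=\div\boldsymbol\chi_t$, so $\eqref{heat:eq}$ turns into $\theta_t-\Delta\theta=-\mu\theta\div\boldsymbol\chi_t$. The initial conditions and the regularity of $\boldsymbol\chi$ are handled exactly as for $\boldsymbol\nu$, this time with $\hp$ in place of $H$.

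\textbf{Energy identities.} These are the same computations as in Lemma~\ref{uni:est}, applied to the split systems. For \eqref{eqdecnondiv}, I multiply by $\boldsymbol\nu_t$ and integrate over $(0,t)\times\tn$, which directly gives the first identity. For \eqref{eqdecnoncurl}, I multiply the wave equation by $\boldsymbol\chi_t$, integrate over $(0,t)\times\tn$, and add to the integrated heat equation $\io \theta(t)\,dx-\io\theta_0\,dx=-\mu\int_0^t\!\io \theta\div\boldsymbol\chi_t\,dx\,d\tau+\int_0^t\!\io\Delta\theta\,dx\,d\tau$; the Laplace term vanishes on the torus, and the coupling terms $\mu\int_0^t\!\io \nabla\theta\cdot\boldsymbol\chi_t\,dx\,d\tau$ and $-\mu\int_0^t\!\io \theta\div\boldsymbol\chi_t\,dx\,d\tau$ cancel after integration by parts, producing the second identity. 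The only mildly delicate point, and the one I would treat most carefully, is justifying the commutation of $H$ and $\hp$ with the time and space derivatives at the regularity level of Definition~\ref{soldef}; this is handled either by the Fourier multiplier characterization on $\tn$ or by noting that the scalar potential $\phi$ in Theorem~\ref{thdecomp} inherits temporal regularity from $\bb u$ via elliptic regularity applied pointwise in $t$.
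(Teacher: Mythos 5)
Your proposal is correct and follows essentially the same route as the paper: both arguments rest on the facts that the Helmholtz projections commute with $\partial_t$ and $\Delta$ and that $\nabla\theta$ lies in the range of $\hp$, and both reduce the heat equation via $\div\bb u_t=\div\boldsymbol\chi_t$ and derive the energy identities by the computation of Lemma~\ref{uni:est}. The only (cosmetic) difference is that the paper packages the key step as testing the momentum equation against $\boldsymbol\nu_{tt}-\Delta\boldsymbol\nu$ and using $L^2$-orthogonality together with $\io\nabla\theta\cdot(\boldsymbol\nu_{tt}-\Delta\boldsymbol\nu)\,dx=0$ to conclude $\|\boldsymbol\nu_{tt}-\Delta\boldsymbol\nu\|_{\ld}=0$, rather than applying $H$ to the equation directly.
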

	\begin{proof}
		Let us deal with the wave equation from \eqref{new:system}. We multiply it by $\boldsymbol\nu_{tt}-\Delta\boldsymbol\nu$ and integrate over $\tn$. We obtain
		\begin{align*}
			\io |\boldsymbol\nu_{tt}-\Delta\boldsymbol\nu|^2dx+\io(\boldsymbol\chi_{tt}-\Delta\boldsymbol\chi)\cdot (\boldsymbol\nu_{tt}-\Delta\boldsymbol\nu)dx=-\mu\io\nabla\theta\cdot(\boldsymbol\nu_{tt}-\Delta\boldsymbol\nu)dx.
		\end{align*}
		We integrate the right-hand side by parts and see that it disappears due to the definition of $\boldsymbol\nu$. Let us consider the second integral on the left-hand side. We know that $\boldsymbol\chi=\hp \bb u$.
		Therefore, 
		\begin{align*}
			\io(\boldsymbol\chi_{tt}-\Delta\boldsymbol\chi)\cdot (\boldsymbol\nu_{tt}-\Delta\boldsymbol\nu)dx
			= \io \hp (\bb u_{tt} - \Delta \bb u) H(\bb u_{tt} - \Delta \bb u)dx=0.
		\end{align*}
		Thus, we have that
		\begin{align*}
			\io |\boldsymbol\nu_{tt}-\Delta\boldsymbol\nu|^2dx=0,
		\end{align*}
		which implies that $\nu$ satisfies \eqref{eqdecnondiv}.
		
		We know that $\div \bb u=\div\boldsymbol\chi+\div\boldsymbol\nu=\div\boldsymbol\chi$. It shows that the lower equation from \eqref{eqdecnoncurl} is satisfied. The upper is attained when we subtract \eqref{eqdecnondiv} from \eqref{new:system}. Then the energy decomposition follows.
	\end{proof}

	\subsection{Displacement and temperature at infinity}
	
	From the previous section, we know that the displacement decomposes into two different vector fields
	\begin{align*}
		\bb u=\boldsymbol\nu+\boldsymbol\chi.
	\end{align*}
	As time progresses, \(\boldsymbol\nu\) and \(\boldsymbol\chi\) exhibit fundamentally different behaviors. Specifically, the divergence-free component \(\boldsymbol\nu\) remains oscillatory unless its initial value is identically zero. To formalize this observation, we state the following proposition.
	
	\begin{prop}
		Let $(\bb u, \theta)$ be a global solution of \eqref{new:system}. Then,
		\begin{align}\label{connondiv}
			\boldsymbol\nu:=H\bb u\to 0\textrm{ in } \ld, \textrm{ as } t\to\infty
		\end{align}
		if and only if $\boldsymbol\nu_0:=H\bb{u}_0=0$ and $\tilde{\boldsymbol\nu}_0:=H\bb{v}_0=0$.
	\end{prop}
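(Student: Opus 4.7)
The plan is to reduce the statement to the homogeneous wave equation \eqref{eqdecnondiv} and then analyze each Fourier mode separately. By Theorem~\ref{twdecompsol}, the divergence-free part $\boldsymbol\nu = H\bb u$ solves the linear wave equation with initial data $\boldsymbol\nu_0 = H\bb u_0$ and $\tilde{\boldsymbol\nu}_0 = H\bb v_0$. The ``if'' direction is immediate: if both initial data vanish, then by uniqueness for the homogeneous wave equation (or by the energy identity in Theorem~\ref{twdecompsol}), $\boldsymbol\nu \equiv 0$, so in particular $\boldsymbol\nu(t, \cdot) \to 0$ in $\ld$.

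For the ``only if'' direction, I would expand $\boldsymbol\nu$ in a Fourier series, writing
\begin{align*}
\boldsymbol\nu(t, x) = \sum_{k \in \mathbb{Z}^d} \hat{\boldsymbol\nu}(t, k) e^{2\pi i k \cdot x}
\end{align*}
(with a suitable normalization for $\tn$). Then \eqref{eqdecnondiv} decouples into one ODE per mode, with the explicit solutions
\begin{align*}
\hat{\boldsymbol\nu}(t, 0) &= \hat{\boldsymbol\nu}_0(0) + t\,\hat{\tilde{\boldsymbol\nu}}_0(0), \\
\hat{\boldsymbol\nu}(t, k) &= \hat{\boldsymbol\nu}_0(k) \cos(2\pi |k| t) + \frac{\hat{\tilde{\boldsymbol\nu}}_0(k)}{2\pi |k|} \sin(2\pi |k| t), \quad k \ne 0.
\end{align*}
By Plancherel, the assumption $\boldsymbol\nu(t, \cdot) \to 0$ in $\ld$ forces $\hat{\boldsymbol\nu}(t, k) \to 0$ as $t \to \infty$ for every fixed $k$. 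For $k = 0$, a linear function of $t$ converges to zero only if it is identically zero, so $\hat{\boldsymbol\nu}_0(0) = \hat{\tilde{\boldsymbol\nu}}_0(0) = 0$. For $k \ne 0$, the trajectory is periodic in $t$, and a nontrivial periodic function cannot converge to zero; hence $\hat{\boldsymbol\nu}_0(k) = \hat{\tilde{\boldsymbol\nu}}_0(k) = 0$. Since this holds for every $k$, the uniqueness of the Fourier expansion yields $\boldsymbol\nu_0 = 0$ and $\tilde{\boldsymbol\nu}_0 = 0$.

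There is no serious obstacle here; the only mild subtlety is verifying that the convergence $\boldsymbol\nu(t,\cdot) \to 0$ in $\ld$ does propagate to each individual Fourier coefficient (which is immediate from Plancherel applied with the indicator of a single mode), and the mode-by-mode explicit formulas above. A purely functional-analytic alternative would observe that the wave semigroup on $\hom \times \ld$ (modulo constants) is unitary and has purely continuous/periodic trajectories apart from the zero-frequency drift, but the Fourier approach is the shortest and most self-contained route.
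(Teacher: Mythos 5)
Your proposal is correct and follows essentially the same route as the paper: reduce to the homogeneous wave equation \eqref{eqdecnondiv} via Theorem~\ref{twdecompsol}, then expand in the eigenbasis of $-\Delta$ on $\tn$ (which is exactly the Fourier basis you use) and observe that each modal coefficient is either affine in $t$ (zero mode) or periodic (nonzero modes), so it can tend to zero only if it vanishes identically. The only cosmetic difference is that you write out the explicit sine/cosine formulas where the paper argues directly from the ODE $d_i''+\lambda_i d_i=0$.
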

	\begin{proof}
		Due to Theorem \ref{twdecompsol}, the vector field $\boldsymbol\nu$ is a solution of \eqref{eqdecnondiv}. This is the wave equation. If $\boldsymbol\nu_0=0$ and $\tilde{\boldsymbol{\nu}}_0=0$, then $\boldsymbol\nu=0$.
		
		Now, let us assume that \eqref{connondiv} is valid. Let us denote that $\{\boldsymbol\psi_i\}$ is basis of $\ld$ consisting of eigenfunctions of $-\Delta$ with values in $\rn$. Moreover, we assume that $\|\boldsymbol\psi_i\|_{\ld}=1$ for all $i\in\n$. Let us also denote the sequence of eigenvalues of $-\Delta$ on $\tn$ as $\{\lambda_i\}$. Then,  Hence, we can write $\boldsymbol\nu(t, x) =\sum_{i=1}^{\infty} d_i(t) \boldsymbol\psi_i(x)$, where $d_i$ is a solution of the problem
		\begin{align*}
			\begin{cases}
				d_i''+\lambda_id_i=0&\textrm{on } [0,\infty),\\
				d_i(0)=(\boldsymbol\nu_0,\boldsymbol\psi_i)_{\ld},\ d_i'(0)=(\tilde{\boldsymbol\nu}_0,\boldsymbol\psi_i)_{\ld}.
			\end{cases}
		\end{align*}
		Condition \eqref{connondiv} implies that $d_i(t)=(\boldsymbol\nu(t),\boldsymbol\psi_i)_{\ld}\to 0$, as $t\to\infty$ for each $i$. Because $\lambda_i\geq 0$, this could happen only if $d_i=0$. It means that $\boldsymbol\nu=0$ and we derive that $\boldsymbol\nu_0=0$ and $\tilde{\boldsymbol\nu}_0=0$.
	\end{proof}
	We notice that even in the nonhomogeneous case of the wave equation, oscillations prevail almost always, see \cite{Bies} for details.
	
	Let us now focus on the asymptotics of the curl-free part of $\bb u$ (i.e., $\boldsymbol\chi$) and the temperature. Our analysis follows morally the one in \cite{BC2}, however technical details need to be adjusted. As in \cite{BC2}, we start with the stability lemma in appropriate functional spaces.

	\begin{prop}\label{twcontdep}
		Let $\bb{u}_{0,1},\bb{u}_{0,2}\in \htwo, \bb{v}_{0,1},\bb{v}_{0,2}\in\hom$ and $\theta_{0,1},\theta_{0,2}\in\hom\cap L^{\infty}(\tn)$. Moreover, we  require that there exists $\tilde\theta_1>0$ and $\tilde\theta_2>0$ such that $\tilde\theta_1\leq\theta_{0,1}$ and $\tilde\theta_2\leq\theta_{0,2}$.
		Let us also assume that $(\bb{u}_1,\theta_1)$ and $(\bb{u}_2,\theta_2)$ are global solutions of \eqref{new:system} (as obtained in Theorem~\ref{main1}) starting from $\bb{u}_{0,1},\bb{v}_{0,1},\theta_{0,1}$ and $\bb{u}_{0,2},\bb{v}_{0,2},\theta_{0,2}$, respectively.  Then, for any $T > 0$, there exists $C=C(T,d,|\mu|, \|\nabla \bb{u}_{1,t}\|_{L^{\infty}(0,T;\ld)},\|\theta_2\|_{L^{\infty}((0,T)\times\tn)},\|\nabla \theta_2\|_{L^{\infty}(0,T;\ld)}, \|\nabla^2\theta_2\|_{L^2((0,T) \times \tn)})>0$ such that
		\begin{align*}
			&\|\bb{u}_1(t,\cdot)-\bb{u}_2(t,\cdot)\|_{\hom}+\|\bb{u}_{1,t}(t,\cdot)-\bb{u}_{2,t}(t,\cdot)\|_{\ld}+\|\theta_1(t,\cdot)-\theta_2(t,\cdot)\|_{\ld}\\
			&\quad\leq C\left(\|\bb{u}_{0,1}-\bb{u}_{0,2}\|_{\hom}+\|\bb{v}_{0,1}-\bb{v}_{0,2}\|_{\ld}+\|\theta_{0,1}-\theta_{0,2}\|_{\ld}\right)
		\end{align*}
		for all $t \in [0, T)$.
	\end{prop}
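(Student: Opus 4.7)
The plan is to derive the PDE system satisfied by the differences $\bb u := \bb u_1-\bb u_2$ and $\theta := \theta_1-\theta_2$, perform classical energy estimates, and close with Gronwall's inequality. The first step is to subtract the two systems and rewrite the nonlinear heat source as
\[
\theta_1\, \div \bb u_{1,t}-\theta_2\, \div \bb u_{2,t}=\theta\, \div \bb u_{1,t}+\theta_2 \,\div \bb u_t,
\]
which routes the coefficients exactly through the quantities the constant $C$ is allowed to depend on. This yields
\[
\bb u_{tt}-\Delta \bb u = -\mu \nabla \theta, \qquad \theta_t - \Delta \theta = -\mu \theta\, \div \bb u_{1,t}-\mu \theta_2 \, \div \bb u_t.
\]

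I would then test the wave equation against $\bb u_t$ and the heat equation against $\theta$. The wave test gives $\tfrac12 \tfrac{d}{dt}(\|\bb u_t\|_{\ld}^2 + \|\nabla \bb u\|_{\ld}^2) = -\mu \io \nabla \theta \cdot \bb u_t$, immediately controlled by $\epsilon \|\nabla \theta\|_{\ld}^2 + C \|\bb u_t\|_{\ld}^2$. The heat test produces two nonlinear terms. For $-\mu \io \theta^2 \div \bb u_{1,t}$, a Hölder estimate combined with the torus Gagliardo-Nirenberg inequality $\|\theta\|_{L^4(\tn)}^2 \le \epsilon \|\nabla \theta\|_{\ld}^2 + C_\epsilon \|\theta\|_{\ld}^2$ (valid in $d\in\{2,3\}$) together with the assumed bound on $\|\nabla \bb u_{1,t}\|_{\ld}$ does the job. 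For $-\mu \io \theta \theta_2 \div \bb u_t$, I would integrate by parts on the torus (no boundary terms) to eliminate $\div \bb u_t$ — an essential move, since no initial datum for $\bb u_t$ is available beyond $\ld$ — producing
\[
\mu \io \theta_2 \nabla \theta \cdot \bb u_t + \mu \io \theta \nabla \theta_2 \cdot \bb u_t,
\]
whose first piece is bounded at once by $\|\theta_2\|_{L^\infty}\,\|\nabla \theta\|_{\ld}\, \|\bb u_t\|_{\ld}$.

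The main obstacle is the remaining term $\mu \io \theta \nabla \theta_2 \cdot \bb u_t$, because $\bb u_t$ is only controlled in $\ld$. I would handle it by Hölder with $\theta \in L^3(\tn)$, $\nabla \theta_2 \in L^6(\tn)$ and $\bb u_t \in \ld$ (in 3D; the 2D case is strictly easier via $H^1\hookrightarrow L^p$ for any finite $p$), using $H^1(\tn) \hookrightarrow L^6(\tn)$ on $\nabla \theta_2$, the interpolation $\|\theta\|_{L^3(\tn)} \le C \|\theta\|_{\ld}^{1/2} \|\theta\|_{\hom}^{1/2}$ on $\theta$, and Young's inequality, to reach an estimate of the form
\[
\Bigl| \mu \io \theta \nabla \theta_2 \cdot \bb u_t \Bigr| \le \epsilon \|\nabla \theta\|_{\ld}^2 + C \|\theta\|_{\ld}^2 + C\bigl(1+\|\nabla \theta_2\|_{H^1(\tn)}^2\bigr) \|\bb u_t\|_{\ld}^2.
\]
This is precisely where the hypothesis $\nabla^2 \theta_2 \in L^2((0,T)\times\tn)$ becomes essential: it ensures that the coefficient $1 + \|\nabla \theta_2\|_{H^1(\tn)}^2$ is $L^1$ in time, which is what Gronwall needs. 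The low regularity of $\bb u_t$ forces us to concentrate the $\nabla\theta_2$ factor into a time-integrable weight rather than keeping a $\|\bb u_t\|_{L^p}$ factor with $p>2$.

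Summing the wave and heat estimates, choosing $\epsilon$ small enough that every $\|\nabla \theta\|_{\ld}^2$ on the right is absorbed into the dissipation on the left, and setting $\Phi(t) := \|\bb u_t(t)\|_{\ld}^2 + \|\nabla \bb u(t)\|_{\ld}^2 + \|\theta(t)\|_{\ld}^2$, I arrive at $\tfrac{d}{dt}\Phi(t) \le h(t)\, \Phi(t)$ with $h \in L^1(0,T)$ depending only on the quantities listed in the statement. Gronwall's inequality then delivers $\Phi(t) \le \Phi(0)\, \exp\!\int_0^T h(s)\, ds$, which yields the desired control of $\|\bb u_t\|_{\ld} + \|\nabla \bb u\|_{\ld} + \|\theta\|_{\ld}$. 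The remaining $\ld$-bound on $\bb u$ itself comes for free from $\|\bb u(t)\|_{\ld} \le \|\bb u(0)\|_{\ld} + \int_0^t \|\bb u_s\|_{\ld}\, ds$ and the bound on $\bb u_t$ just obtained, completing the desired $\hom \times \ld \times \ld$ stability estimate.
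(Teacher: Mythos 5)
Your proposal is correct and follows essentially the same route as the paper: the same splitting of the nonlinear source, the same integration by parts to remove $\div\bb u_t$ from the $\theta_2$-term, and the same Gronwall weight $1+\|\nabla^2\theta_2\|_{\ld}^2\in L^1(0,T)$. The only difference is cosmetic — you bound $\io\theta\,\nabla\theta_2\cdot\bb u_t$ via $L^3\times L^6\times L^2$ Hölder with interpolation, whereas the paper uses $L^4\times L^4\times L^2$ with Gagliardo--Nirenberg on both factors; both yield the same time-integrable coefficient.
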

	\begin{proof}
		Let us denote $\bb{u}:=\bb{u}_1-\bb{u}_2$ and $\theta:=\theta_1-\theta_2$. We subtract the equations for $(\bb{u}_2,\theta_2)$ from the equations for $(\bb{u}_1,\theta_1)$ and
		obtain
		\begin{align}\label{uklpom}
			\begin{cases}
				\bb{u}_{tt}-\Delta \bb{u}=-\mu\nabla\theta,\\
				\theta_t-\Delta\theta=-\mu\theta \div \bb{u}_{1,t}-\mu \theta_2\div \bb{u}_{t}.
			\end{cases}
		\end{align}
		We multiply by $\bb{u}_t$ the first of the above equations and arrive at
		\begin{align}\label{glrow1}
			\ul\ddt\left(\io \bb{u}_t^2dx+\io |\nabla u|^2dx \right)=-\mu\io \bb{u}_t\cdot\nabla\theta dx\leq C\|\bb{u}_t\|_{\ld}^2+\epsilon\|\nabla \theta\|^2_{\ld}.
		\end{align}
		Whereas, multiplying the second equation in \eqref{uklpom} by $\theta$, we obtain
		\begin{align}\label{glrow2}
			\nonumber\ul\ddt&\io\theta^2 dx+\io|\nabla\theta|^2 dx=-\mu\io\theta^2\div \bb{u}_{1,t}dx-\mu\io\theta\theta_2\div \bb{u}_{t}dx\\
			&=-\mu\io\theta^2\div \bb{u}_{1,t}dx+\mu\io\theta\nabla\theta_2\cdot \bb{u}_{t}dx+\mu\io\theta_2\nabla\theta\cdot \bb{u}_{t}dx=I_1+I_2+I_3.
		\end{align}
		
		We bound the integral $I_1$ using the Gagliardo-Nirenberg inequality and then the Young inequality.
		We get that for each $\epsilon$, there exists $C = C(|\mu|, \nabla \bb{u}_{1,T}\|_{L^{\infty}(0,T;\ld)}) > 0$ such that
		\begin{align}\label{inconde1}
			I_1\leq |\mu|\|\div \bb{u}_{1,t}\|_{L^{\infty}(0,T;\ld)}\|\theta\|_{L^4(\tn)}^2\leq \epsilon\|\nabla\theta\|^2_{\ld}+C\|\theta\|_{\ld}^2.
		\end{align}
		The integral $I_{3}$ we estimate by the Schwarz inequality and the Young inequality. We also utilize Lemma \ref{upbofortemp} here
		and for any $\epsilon > 0$ find $C = C(|\mu|, \|\theta_2\|_{L^{\infty}((0,T)\times\tn)})$ such that
		\begin{align}\label{inconde2}
			I_{3}\leq |\mu|\|\theta_2\|_{L^{\infty}((0,T)\times\tn)}\|\nabla\theta\|_{\ld}\|\bb{u}_t\|_{\ld}\leq \epsilon\|\nabla\theta\|_{\ld}^2+C\|\bb{u}_t\|_{\ld}^2.
		\end{align}
		Let us proceed with $I_{2}$. We use the Gagliardo-Nirenberg inequality and the Young inequality.
		\begin{align}\label{inconde3}
			I_{2}&\leq |\mu| \|\theta\|_{L^4(\tn)}\|\nabla\theta_2\|_{L^4(\tn)}\|\bb{u}_t\|_{\ld} \notag \\
			&\leq C(\|\nabla\theta\|_{\ld}+\|\theta\|_{\ld})(\|\nabla^2\theta_2\|_{\ld}+\|\nabla\theta_2\|_{\ld})\|\bb{u}_t\|_{\ld} \notag \\
			&\leq \epsilon\|\nabla\theta\|_{\ld}^2+ C(\|\nabla^2\theta_2\|_{\ld}^2\|\bb{u}_t\|_{\ld}^2+\|\bb{u}_t\|_{\ld}^2+\|\theta\|_{\ld}^2).
		\end{align}
		
		We plug \eqref{inconde1}, \eqref{inconde2} and \eqref{inconde3} into \eqref{glrow2}. The obtained inequality we add to \eqref{glrow1}. It leads us to
		\begin{align}\label{inconde_combined}
			\ul\ddt&\left(\|\bb{u}_t\|^2_{\ld}+\|\nabla\bb u\|_{\ld}^2+\|\theta\|^2_{\ld}\right)+\|\nabla\theta\|_{\ld}^2 \notag
			\\ &\leq4\epsilon\|\nabla\theta\|_{\ld}^2+ C(\|\nabla^2\theta_2\|_{\ld}^2\|\bb{u}_t\|_{\ld}^2+\|\bb{u}_t\|_{\ld}^2+\|\theta\|_{\ld}^2).
		\end{align}
		We take $\epsilon:=\frac18$.
		
		Let us denote $y:=\|\bb{u}_t\|^2_{\ld}+\|\nabla \bb u\|_{\ld}^2+\|\theta\|^2_{\ld}$. Then, we rewrite \eqref{inconde_combined} as
		\begin{align}\label{inconde_odi}
			y'\leq C y(1+\|\nabla^2\theta_2\|_{\ld}^2).
		\end{align}
		Multiplying \eqref{inconde_odi} by $\operatorname{exp}\left(-C\int_0^s\|\nabla^2\theta_2\|_{\ld}^2d\lambda-Cs\right)$ for $s\in(0,t]$, we obtain
		\begin{align*}
			\frac{d}{ds}\left(y \operatorname{exp}\left(-C\int_0^s\|\nabla^2\theta_2\|_{\ld}^2d\lambda-Cs\right)\right)\leq 0.
		\end{align*}
		We integrate over $[0,t]$ and see that
		$$
		y(t)\leq y(0)\operatorname{exp}\left(C\int_0^t\|\nabla^2\theta_2\|_{\ld}^2ds+Ct\right).
		$$
		It gives the thesis.
	\end{proof}
	
	We denote
	\begin{align*}
		\M:=\left\{\eta\in \hom \cap L^\infty(\tn) \colon \operatorname*{ess\, inf}_{x\in\tn}\eta(x)>0\right\}.
	\end{align*}
	Let us take $t\geq 0$. We define an operator
	\begin{align*}
		S(t):\htwo\times\hom\times\M\to \htwo\times\hom\times\M
	\end{align*}
	by formula
	\begin{align*}
		S(t)(\bb{u}_0,\bb{v}_0,\theta_0)=(\bb u(t),\bb{u}_t(t),\theta(t)),
	\end{align*}
	where $(\bf u, \theta)$ is a solution of \eqref{new:system} with initial values $\bb{u}_0, \bb{v}_0$ and $\theta_0$. Thanks to the uniqueness of the solutions of the system, we obtain
	\begin{align*}
		S(t+h)=S(h)\circ S(t)
	\end{align*}
	for all $t,h\in [0,\infty)$.
	
	Proposition \ref{twcontdep} implies the following important corollary.
	\begin{cor}\label{corcontdep}
		Let  $(\bb{u}_0,\bb{v}_0,\theta_0),(\bb{u}_n,\bb v_n,\theta_n)\in \htwo\times\hom\times\M$ be such that
		\begin{align*}
			(\bb{u}_n,\bb v_n,\theta_n)\to(\bb{u}_0,\bb{v}_0,\theta_0)\textrm{ in }\hom\times\ld\times\ld.
		\end{align*}
		Moreover, assume that all the quantities occurring in the constant $C$ in Proposition \ref{twcontdep} are controlled along $(\bb{u}_n,v_n,\theta_n)$. The latter means that we are restricted to a subset of a phase space. Then, we have
		\begin{align*}
			S(t)(\bb{u}_n,\bb v_n,\theta_n)\to S(t)(\bb{u}_0,\bb{v}_0,\theta_0)\textrm{ in }\hom\times\ld\times\ld
		\end{align*}
		for each $t\geq 0$.
	\end{cor}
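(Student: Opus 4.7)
The plan is to deduce the corollary as a direct corollary of Proposition~\ref{twcontdep}, combined with the uniform-control hypothesis. Fix $t \ge 0$ and pick any $T > t$. Let $(\bb{u}^{(n)}, \theta^{(n)})$ denote the global solution starting from $(\bb{u}_n, \bb v_n, \theta_n)$ and $(\bb{u}, \theta)$ the one starting from $(\bb{u}_0, \bb{v}_0, \theta_0)$. Applying Proposition~\ref{twcontdep} on $[0,T]$ to this pair (with $(\bb u_1, \theta_1) = (\bb u^{(n)}, \theta^{(n)})$ and $(\bb u_2, \theta_2) = (\bb u, \theta)$, or the other way around depending on which side carries the more convenient bounds) yields
\begin{align*}
  & \|\bb{u}^{(n)}(t) - \bb{u}(t)\|_{\hom} + \|\bb{u}^{(n)}_t(t) - \bb{u}_t(t)\|_{\ld} + \|\theta^{(n)}(t) - \theta(t)\|_{\ld} \\
  & \qquad \le C_n \left(\|\bb{u}_n - \bb{u}_0\|_{\hom} + \|\bb{v}_n - \bb{v}_0\|_{\ld} + \|\theta_n - \theta_0\|_{\ld}\right),
\end{align*}
where $C_n$ depends on $T, d, |\mu|$, on $\|\nabla \bb u^{(n)}_t\|_{L^{\infty}(0,T;\ld)}$ and on the temperature quantities of Proposition~\ref{twcontdep} evaluated on $(\bb u, \theta)$ (or vice versa).

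Next, I would show that $\sup_n C_n < \infty$. This is exactly the content of the hypothesis that all quantities entering the constant are controlled along the sequence $(\bb{u}_n, \bb{v}_n, \theta_n)$: the norm $\|\nabla \bb u^{(n)}_t\|_{L^\infty(0,T;\ld)}$ is uniformly bounded by assumption, and the temperature-type bounds on the limit solution $(\bb{u}, \theta)$ follow either from the same hypothesis applied to the limit (which lies in the restricted subset of phase space) or by passing to the limit in the uniform bounds of Lemma~\ref{glob:est} and Lemma~\ref{upbofortemp} using the weak-$\ast$ and strong convergences already obtained in Lemma~\ref{local:est}. Letting $n \to \infty$, the right-hand side tends to $0$ by the assumed convergence of the initial data in $\hom \times \ld \times \ld$, which gives the stated convergence of $S(t)(\bb u_n, \bb v_n, \theta_n)$.

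The only mildly delicate point is the consistent choice of which solution plays the role of $(\bb u_1, \theta_1)$ and which plays $(\bb u_2, \theta_2)$ in Proposition~\ref{twcontdep}, since the constant is not symmetric in the two solutions; but since the ``restricted to a subset of a phase space'' clause is meant precisely to guarantee that both solutions satisfy all the relevant bounds uniformly, either labeling works, and no additional a priori estimate beyond Proposition~\ref{twcontdep} is required.
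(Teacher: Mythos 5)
Your proposal is correct and follows essentially the same route as the paper, which treats the corollary as an immediate consequence of Proposition~\ref{twcontdep}: apply the stability estimate on $[0,T]$ with $T>t$, observe that the hypothesis of the corollary is precisely the uniform boundedness of the constant $C$ along the sequence, and let the right-hand side tend to zero using the assumed convergence of the initial data. Your remark about the asymmetric roles of the two solutions in the constant is a sensible point of care that the paper handles only implicitly via the ``restricted to a subset of the phase space'' clause.
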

	The above corollary will be applied to the sequence of values of the solution in times $t_k\rightarrow \infty$. In particular, since this is the same solution, the control of quantities appearing in the constant $C$ in Proposition \ref{twcontdep} is ensured.

	We need certain global estimates for $\boldsymbol\nu$ and $\theta$. We will prove them in the below theorem.
	\begin{tw}\label{twestas}
		Suppose that the initial data fulfill \eqref{initval} and \eqref{initcondasymp}.
		Then the global solution $(\bb u, \theta)$ of \eqref{new:system} given by Theorem~\ref{main1} fulfills
		\begin{align*}
			\boldsymbol\chi:=\hp \bb u&\in L^{\infty}(0,\infty;\htwo),\ \boldsymbol\chi_t \in L^{\infty}(0,\infty;\hom),\ \boldsymbol\chi_{tt} \in L^{\infty}(0,\infty;\ld) \textrm{ and }\\ 
			\theta&\in C([0,\infty);\hom)\cap L^{\infty}(0,\infty;\hom).
		\end{align*}
	\end{tw}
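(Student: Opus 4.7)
\emph{Proof plan.} The strategy is to combine the uniform-in-time bound $\F(\bb u(t),\theta(t))\le D$ granted by Corollary~\ref{est:sol} with the Helmholtz splitting of Theorem~\ref{twdecompsol}, completed by a Moser iteration for the temperature. The estimate $\F\le D$ gives at once
\[
\|\nabla\bb u_t\|_{L^\infty(0,\infty;\ld)}+\|\Delta\bb u\|_{L^\infty(0,\infty;\ld)}+\|\nabla\sqrt\theta\|_{L^\infty(0,\infty;\ld)}\le C.
\]
Because $\hp$ is bounded on every $H^k(\tn)$ (Theorem~\ref{thdecomp}) and commutes with $\partial_t$ and $\Delta$, the first two estimates transfer to $\Delta\boldsymbol\chi,\nabla\boldsymbol\chi_t\in L^\infty(0,\infty;\ld)$. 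Since $\boldsymbol\chi=\nabla\phi$ with $\io\phi\,dx=0$, both $\boldsymbol\chi$ and $\boldsymbol\chi_t$ have vanishing spatial mean; applying Lemma~\ref{inpion} componentwise together with the Poincar\'e inequality then upgrades these bounds to $\boldsymbol\chi\in L^\infty(0,\infty;\htwo)$ and $\boldsymbol\chi_t\in L^\infty(0,\infty;\hom)$.

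For $\boldsymbol\chi_{tt}$ I would read off from \eqref{eqdecnoncurl} that $\boldsymbol\chi_{tt}=\Delta\boldsymbol\chi-\mu\nabla\theta$, reducing matters to a uniform bound on $\nabla\theta$ in $\ld$. Via the factorisation $\nabla\theta=2\sqrt\theta\,\nabla\sqrt\theta$ and the control of $\nabla\sqrt\theta$ just obtained, it is enough to produce a time-uniform bound $\theta\in L^\infty((0,\infty)\times\tn)$. I would extract the latter from the Moser-type Lemma~\ref{upbofortemp}, whose hypotheses are met: the forcing $f\coloneqq\mu|\div\bb u_t|$ lies in $L^\infty(0,\infty;\ld)$ by the previous step, $\|\theta\|_{L^\infty(0,\infty;L^1(\tn))}$ is controlled by the total energy balance, and $\theta_0\in L^\infty(\tn)$ is at our disposal within the standing assumptions of Section~\ref{czwarta}. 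Inserting the resulting pointwise bound back into $\nabla\theta=2\sqrt\theta\,\nabla\sqrt\theta$ simultaneously yields $\theta\in L^\infty(0,\infty;\hom)$ and, via the wave identity, $\boldsymbol\chi_{tt}\in L^\infty(0,\infty;\ld)$.

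The continuity $\theta\in C([0,\infty);\hom)$ then follows from the parabolic regularity $\theta\in L_{\loc}^2([0,\infty);\htwo)\cap H_{\loc}^1([0,\infty);\ld)$ provided by Lemma~\ref{glob:est}, through the Lions--Magenes embedding $L^2(0,T;H^2)\cap H^1(0,T;L^2)\hookrightarrow C([0,T];H^1)$ applied on each bounded time interval. The crux of the argument, and its principal obstacle, is the Moser step: the $\F$-bound by itself controls only $\sqrt\theta$ in $H^1$ and does not produce a pointwise upper bound on $\theta$; it is the iteration that converts the concurrent $L^\infty(\ld)$-control of $\div\bb u_t$ and $L^\infty(L^1)$-control of $\theta$ into the required time-uniform $L^\infty$-bound, and it is precisely the time-uniformity of $\F\le D$ (as opposed to the $T$-dependent bounds of Lemma~\ref{l:est}) that makes the conclusion global in time.
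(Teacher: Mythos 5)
Your argument follows the same route as the paper: uniform bounds from $\F\le D$, transfer to $\boldsymbol\chi$ via the Helmholtz projection and Lemma~\ref{inpion}/Poincar\'e, the Moser bound of Lemma~\ref{upbofortemp} to convert $\nabla\sqrt\theta\in L^\infty(0,\infty;\ld)$ into $\nabla\theta\in L^\infty(0,\infty;\ld)$, and the wave identity for $\boldsymbol\chi_{tt}$. All of that is sound.

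The one genuine flaw is the sentence claiming that ``$\theta_0\in L^\infty(\tn)$ is at our disposal within the standing assumptions of Section~\ref{czwarta}.'' It is not: the hypotheses of the theorem are \eqref{initval} and \eqref{initcondasymp}, which give only $\theta_0\in\hom$, and in dimensions $2$ and $3$ the space $H^1$ does not embed into $L^\infty$. So Lemma~\ref{upbofortemp} cannot be invoked directly at $t=0$, and your Moser step as written has a missing hypothesis. The paper closes exactly this gap by a shift in time: since $\theta\in L^2_{\loc}([0,\infty);\htwo)$ by Lemma~\ref{glob:est}, there exists $t_0>0$ with $\theta(t_0,\cdot)\in\htwo\hookrightarrow L^\infty(\tn)$; one then runs the Moser iteration on $(t_0,\infty)$ to get $\theta\in L^\infty((t_0,\infty)\times\tn)$, hence $\nabla\theta\in L^\infty(t_0,\infty;\ld)$ via $\nabla\theta=2\sqrt\theta\,\nabla\sqrt\theta$, and covers the compact initial interval $[0,t_0]$ by the continuity $\theta\in C([0,\infty);\hom)$ (which you already derive from the Lions--Magenes embedding). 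With that one adjustment your proof is complete and coincides with the paper's.
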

	\begin{proof}
		By Lemma~\ref{glob:est}, we get
		\begin{align}\label{inestasa}
			&\nabla \bb{u}_t\in L^{\infty}(0,\infty;\ld), \quad
			\Delta \bb{u}\in L^{\infty}(0,\infty;\ld), \quad
			\nabla\theta^{\sul}\in L^{\infty}(0,\infty;\ld), \\
			&\theta \in L^{2}_{\loc}(0,\infty;\htwo) \quad \textrm{and} \quad
			\theta_t \in L^{2}_{\loc}(0,\infty;\ld) \notag
		\end{align}
		Hence, $\theta \in C([0, \infty); \hom)$. The Sobolev embeddings in dimensions 2 and 3 yield the existence of a time $t_0$ such that $u(t_0, \cdot)$ is bounded (because $\theta \in L^{2}_{\loc}(0,\infty;\htwo)$). 
		Therefore, Lemma \ref{upbofortemp} guarantees that $\theta\in L^{\infty}((t_0,\infty)\times\tn)$. Moreover, we obtain
		\begin{align*}
			\|\nabla\theta\|_{\ld}^2=\io|\nabla\theta|^2 dx\leq \|\theta\|_{L^{\infty}((t_0,\infty)\times\tn)}4\io|\nabla\theta^{\sul}|^2dx
		\end{align*}
		for $t\in (t_0,\infty)$. 
		Consequently, owing to \eqref{inestasa}, we observe that $\nabla\theta \in L^{\infty}(t_0,\infty;\ld)$. By \eqref{balance}, we  have that $\theta\in L^{\infty}(0,\infty;L^1(\tn))$. Consequently, $\theta\in L^{\infty}(t_0,\infty;\hom)$. This together with $\theta \in C([0, \infty); \hom)$ gives that $\theta\in L^{\infty}(t_0,\infty;\hom)$.
		
		Because of \eqref{inestasa}, we know that
		\begin{align}\label{inestas11}
			\nabla \boldsymbol\chi_t\in L^{\infty}(0,\infty;\ld),\
			\Delta \boldsymbol\chi\in L^{\infty}(0,\infty;\ld).
		\end{align}
		Thanks to Theorem~\ref{thdecomp},
		\begin{align}\label{inestas12}
			\io \boldsymbol\chi dx=0.
		\end{align}
		Therefore, the Poincar\'e inequality and Lemma~\ref{inpion} give us that
		\begin{align}\label{inestas2}
			\|\boldsymbol\chi\|_{\ld}\leq C \|\nabla\boldsymbol\chi\|_{\ld}\leq C\|\Delta\boldsymbol\chi\|_{\ld}.
		\end{align}
		From the theory of regularity of solutions for elliptic equations (see \cite[Section 6.3]{evans}), we have the following inequality
		\begin{align*}
			\|\boldsymbol\chi\|_{\htwo}\leq C(\|\Delta\boldsymbol\chi\|_{\ld}+\|\boldsymbol\chi\|_{\ld}).
		\end{align*}
		This with \eqref{inestas2} and \eqref{inestas11} gives us that $\boldsymbol\chi\in L^{\infty}(0,\infty;\htwo)$. Thanks to the Poincar\'e inequality \eqref{inestas12} and \eqref{inestas11}, we also show that $\boldsymbol\chi_t\in L^{\infty}(0,\infty;\hom)$. Due to Theorem \ref{thdecomp}, we know that the equation
		\begin{align*}
			\boldsymbol\chi_{tt}-\Delta\boldsymbol\chi=-\mu\nabla\theta
		\end{align*} 
		is satisfied. We already know that $\Delta\boldsymbol\chi,\nabla\theta\in L^{\infty}(0,\infty,\ld)$. It implies that $\boldsymbol\chi_{tt} \in L^{\infty}(0,\infty;\ld)$.
	\end{proof}
	
	Now, we are in a position to formulate and prove the main theorem of this section and hence also Theorem~\ref{main3}. It follows closely the proof in \cite{BC2}.
	However, $t \mapsto \boldsymbol\chi(t,\cdot)$ and $t \mapsto \boldsymbol\chi_t(t,\cdot)$ are only known to be essentially bounded in $\htwo$ and $\hom$, respectively.
	This makes the compactness argument presented below a bit more delicate.
	\begin{tw}
		Suppose that the initial data fulfill \eqref{initval}, \eqref{initcondasymp} and $\theta_0^{-1} \in L^\infty(\tn)$.
		Then for the global solution $(\bb u, \theta)$ of \eqref{new:system} given by Theorem~\ref{main1}, the following convergences hold:
		\begin{align*}
			\boldsymbol\chi(t,\cdot)&\to 0\textrm{ in }\hom \textrm{ as }t\to\infty,\\
			\boldsymbol\chi_t(t,\cdot)&\to 0\textrm{ in }\ld \textrm{ as }t\to\infty,\\
			\theta(t,\cdot)&\to \theta_{\infty}\textrm{ in }\ld \textrm{ as }t\to\infty,
		\end{align*}
		where $\theta_{\infty}:=\left(\ul\io \boldsymbol{\tilde\chi}_0^2dx+\ul\io |\nabla\boldsymbol\chi_0|^2dx+\io\theta_0 dx\right)$,
		$\boldsymbol\chi=\hp \bb u$, $\boldsymbol\chi_0=\hp \bb{u}_0$ and $\boldsymbol{\tilde\chi}_0=\hp \bb{v}_0$.
	\end{tw}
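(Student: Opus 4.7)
Following the strategy of \cite{BC2}, I will extract a subsequential limit of $(\boldsymbol\chi(t_k), \boldsymbol\chi_t(t_k), \theta(t_k))$ as $t_k \to \infty$, identify its evolution under the curl-free subsystem \eqref{eqdecnoncurl} as a stationary solution by exploiting the integrated entropy production, pin down the stationary state as $(0, 0, \theta_\infty)$ via the energy identity of Theorem~\ref{twdecompsol}, and then upgrade the subsequential convergence to full-time convergence by uniqueness of the $\omega$-limit. Theorem~\ref{twestas} supplies the uniform bounds $\boldsymbol\chi \in L^\infty(0,\infty; \htwo)$, $\boldsymbol\chi_t \in L^\infty(0,\infty; \hom)$, $\boldsymbol\chi_{tt} \in L^\infty(0,\infty; \ld)$ and $\theta \in C([0,\infty); \hom) \cap L^\infty(0,\infty; \hom)$, while Theorem~\ref{main2} (applicable since $\theta_0^{-1} \in L^\infty$) yields $0 < m \le \theta \le M$ on $(0,\infty) \times \tn$, so that the dissipation balance \eqref{taul2:smooth} upgrades to $\nabla \theta \in L^2((0,\infty) \times \tn)$. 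Along any $t_k \to \infty$, the compact embeddings $\htwo \hookrightarrow \hom$ and $\hom \hookrightarrow \ld$ allow me to extract a subsequence (still denoted $t_k$, chosen among Lebesgue points of the $\htwo$-norm) with $\boldsymbol\chi(t_k) \to \boldsymbol\chi^\ast$ in $\hom$, $\boldsymbol\chi_t(t_k) \to \boldsymbol\chi_t^\ast$ in $\ld$, and $\theta(t_k) \to \theta^\ast$ in $\ld$.

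\textbf{The $\omega$-limit orbit is stationary.}
Consider the shifted trajectories $\boldsymbol\chi_k(s,\cdot) := \boldsymbol\chi(t_k + s, \cdot)$, $\theta_k(s,\cdot) := \theta(t_k + s, \cdot)$. The uniform-in-time bounds control all quantities entering the constant of Proposition~\ref{twcontdep}, so Corollary~\ref{corcontdep} gives, for each fixed $s \ge 0$, convergence in $\hom \times \ld \times \ld$ of $(\boldsymbol\chi_k(s), \boldsymbol\chi_{k,t}(s), \theta_k(s))$ to the value at time $s$ of the unique solution $(\bar{\boldsymbol\chi}, \bar\theta)$ of \eqref{eqdecnoncurl} starting from $(\boldsymbol\chi^\ast, \boldsymbol\chi_t^\ast, \theta^\ast)$. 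On the other hand, $\int_{t_k}^{t_k + \tau}\!\io |\nabla \log \theta|^2 \to 0$ for every $\tau > 0$ by summability of the entropy production, which, in combination with the two-sided bound on $\theta$, transfers in the limit to $\nabla \bar\theta \equiv 0$ on $(0,\tau) \times \tn$; hence $\bar\theta(s,\cdot)$ is spatially constant. The heat equation then reduces to $\bar\theta_s = -\mu \bar\theta \div \bar{\boldsymbol\chi}_t$, and integrating over $\tn$ (using $\io \div \bar{\boldsymbol\chi}_t = 0$) yields $\bar\theta_s = 0$ and hence $\div \bar{\boldsymbol\chi}_t = 0$. Since $\bar{\boldsymbol\chi}_t$ is curl-free with zero mean on $\tn$, the decomposition $\Delta \bar{\boldsymbol\chi}_t = -\curl\curl \bar{\boldsymbol\chi}_t + \nabla \div \bar{\boldsymbol\chi}_t = 0$ together with the Poincar\'e inequality forces $\bar{\boldsymbol\chi}_t \equiv 0$; then $\bar{\boldsymbol\chi}$ is time-independent and the momentum equation collapses to $-\Delta \bar{\boldsymbol\chi} = -\mu \nabla \bar\theta = 0$, whence $\bar{\boldsymbol\chi} = 0$. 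Therefore $\boldsymbol\chi^\ast = 0$, $\boldsymbol\chi_t^\ast = 0$, and $\theta^\ast$ is a spatial constant.

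\textbf{Identification and main obstacle.}
Passing to the limit in the energy identity of Theorem~\ref{twdecompsol} at $t = t_k$ along the chosen subsequence gives $\io \theta^\ast = \ul\io \boldsymbol{\tilde\chi}_0^2 + \ul\io |\nabla \boldsymbol\chi_0|^2 + \io \theta_0$, which together with the spatial constancy of $\theta^\ast$ identifies $\theta^\ast = \theta_\infty$. Since every sequence $t_k \to \infty$ admits such a subsequence with the \emph{same} limit $(0,0,\theta_\infty)$, a standard subsubsequence argument promotes the subsequential convergence into convergence as $t \to \infty$. The main obstacle is precisely the compactness step that underlies the identification of the $\omega$-limit: the time-regularity of $\boldsymbol\chi$ and $\boldsymbol\chi_t$ is only $L^\infty$ (not $C^0$) in the strong norms $\htwo$ and $\hom$ in which compactness is extracted, so evaluating the orbit at the instants $t_k$, promoting the shifted trajectories to a bona fide solution of \eqref{eqdecnoncurl} on a whole interval, and transferring $\int |\nabla \log \theta_k|^2 \to 0$ to $\nabla \bar\theta \equiv 0$ all hinge on the careful use of Corollary~\ref{corcontdep} together with the uniform-in-time, high-regularity estimates of Theorem~\ref{twestas} — in particular on $\theta \in C([0,\infty); \hom)$ rather than just $L^\infty$.
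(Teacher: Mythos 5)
Your proposal is correct and follows essentially the same route as the paper: compactness of the orbit, the stability result of Corollary~\ref{corcontdep} to identify the $\omega$-limit trajectory as a solution of \eqref{eqdecnoncurl}, the entropy balance to force $\bar\theta$ to be constant in space and time, the resulting harmonicity of the potential of $\bar{\boldsymbol\chi}_t$ to conclude $\bar{\boldsymbol\chi}=0$, and the energy identity plus a subsubsequence argument to finish. The only deviations are cosmetic: you obtain $\nabla\bar\theta\equiv 0$ from the summability of the entropy production $\int_0^\infty\io|\nabla\log\theta|^2\,dx\,dt$ rather than from the monotone convergence of $t\mapsto\io\log\theta\,dx$ as in the paper (both stem from \eqref{taul2:smooth} and \eqref{ogrzprop}), and the one step you flag but leave implicit --- upgrading convergence from the full-measure set of times on which the $L^\infty$-in-time $\htwo\times\hom$ bounds hold pointwise to all $t\to\infty$, using the continuity of $t\mapsto(\boldsymbol\chi(t),\boldsymbol\chi_t(t))$ in $\hom\times\ld$ --- is resolved in the paper in exactly the way you indicate.
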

	\begin{proof}
		Without loss of generality, we can consider $\theta_0\in L^\infty(\tn)$. Indeed, in view of Lemma~\ref{local:est}, we know that $\theta\in L^2_{\loc}(0,\infty;\htwo)$. Hence, the Sobolev embeddings in dimensions 2 and 3 yield the existence of a time $t_0$ such that $\theta(t_0, \cdot)$ is bounded. Let us take a sequence $t_n\in [0,\infty)$ such that $t_n\to\infty$. We consider the sequences $\boldsymbol\chi(t_n,\cdot)$, $\boldsymbol\chi_t(t_n,\cdot)$ and
		$\boldsymbol\chi(t_n,\cdot)$. We know by Theorem~\ref{twestas} that
		\begin{align}\label{cont}
			\begin{split}
				\boldsymbol\chi\in C([0,\infty);&\hom),\ \boldsymbol\chi_t \in C([0,\infty);\ld),\\
				\theta& \in C([0,\infty);\hom),
			\end{split}
		\end{align}
		hence the sequence $\theta(t_n,\cdot)$ is bounded in $\hom$.
		However, we only know that
		\begin{align*}
			\boldsymbol\chi\in L^{\infty}(0,\infty;\htwo),\quad \boldsymbol\chi_t\in L^{\infty}(0,\infty,\hom).
		\end{align*}
		Thus, there exists a measurable set $\T\subset [0,\infty)$ such that $\left|[0,\infty)\setminus\T\right|=0$ and $\boldsymbol\chi|_{\T}$, $\boldsymbol\chi_t|_{\T}$ are bounded
		functions with values in $\htwo$ and $\hom$ respectively.
		However, if we show
		\begin{align*}
			\lim_{\substack{t\to\infty\\t\in\T}}\boldsymbol\chi(t,\cdot)=0\textrm{ in }\hom
			\quad \text{and} \quad
			\lim_{\substack{t\to\infty\\t\in\T}}\boldsymbol\chi_t(t,\cdot)=0\textrm{ in }\ld,
		\end{align*}
		then \eqref{cont} will imply that
		\begin{align*}
			\lim_{t\to\infty}\boldsymbol\chi(t,\cdot)=0\textrm{ in }\hom,
			\quad \text{and} \quad
			\lim_{t\to\infty}\boldsymbol\chi_t(t,\cdot)=0\textrm{ in }\ld.
		\end{align*}
		
		Therefore, we assume that $\boldsymbol\chi(t_n,\cdot)$ and $\boldsymbol\chi_t(t_n,\cdot)$ are bounded in $\htwo$ and $\hom$, respectively. We also know that $\theta(t_n,\cdot)$ is
		bounded in $\hom$. Thus, we have a subsequence of $t_n$ (still denoted as $t_n$) such that
		\begin{align}\label{cztery}
			\boldsymbol\chi(t_n,\cdot)&\rightharpoonup \boldsymbol{\chi}_\infty\textrm{ in }\htwo,\nonumber\\
			\boldsymbol\chi_t(t_n,\cdot)&\rightharpoonup \boldsymbol{\tilde v}_\infty\textrm{ in }\hom,\\
			\theta(t_n,\cdot)&\rightharpoonup\theta_{\infty}\textrm{ in }\hom,\nonumber
		\end{align}
		where $\boldsymbol{\chi}_\infty\in\htwo$, $\boldsymbol{\tilde v}\in\hom$ and $\theta_{\infty}\in\hom$ are certain functions. 
		By compact embeddings, we have that
		\begin{align*}
			\begin{split}
				\boldsymbol\chi(t_n,\cdot)&\to \boldsymbol{\chi}_\infty\textrm{ in }\hom,\\
				\boldsymbol\chi_t(t_n,\cdot)&\to \boldsymbol{\tilde v}_\infty\textrm{ in }\ld,\\
				\theta(t_n,\cdot)&\to\theta_{\infty}\textrm{ in }\ld.
			\end{split}
		\end{align*}
		We know that $\boldsymbol\chi(t_n,\cdot)\in\hp(\htwo)$ and $\boldsymbol\chi(t_n,\cdot)\in\hp(\hom)$ for all $n$. Because $\hp(\htwo)$ and $\hp(\hom)$ are convex and closed subsets of $\htwo$ and  $\hom$, respectively, we obtain that $\boldsymbol{\chi}_\infty\in\hp(\htwo)$ and $\bb v_\infty\in\hp(\hom)$. 
		Next, we define solutions starting from functions, which are obtained as limits of $\boldsymbol\chi(t_n,\cdot),\boldsymbol\chi_t(t_n,\cdot)$ and $\theta(t_n,\cdot)$. We denote by $\bar{\boldsymbol\chi}$ and
		$\bar\theta$ the weak solution of \eqref{new:system} with the initial values $\boldsymbol{\chi}_\infty$, $\bb v_\infty$ and $\theta_{\infty}$ (they can be defined due to the fact
		that the limiting objects have enough regularity to define weak solutions, see \eqref{cztery}) and Theorem~\ref{main1};
		note that the lower bound for $\theta$ obtained in Theorem~\ref{thlowbotemp} implies that bounds for $\theta_\infty$ carry over to $\log \theta_\infty$.)
		
		Let us denote $\bar{\boldsymbol\nu}:=H\bar{\boldsymbol\chi}$. Then, the function $\bar{\boldsymbol\nu}$ is a solution of the following problem
		\begin{align*}
			\begin{cases}
				\bar{\boldsymbol\nu}_{tt}-\Delta\bar{\boldsymbol\nu}=0,\\
				\bar{\boldsymbol\nu}(0,\cdot)=0,\ \bar{\boldsymbol\nu}_t(0,\cdot)=0.
			\end{cases}
		\end{align*}
		Thus, we have that $\bar{\boldsymbol\nu}=0$ and then $\bar{\boldsymbol\chi}\in\hp(\htwo)$ and $\bar{\boldsymbol\chi}_t\in\hp(\hom)$. Consequently, there exists a function $\phi$ such that $\bar\chi=\nabla\phi$.
		
		Next, we take $h>0$ and consider the sequences $\bb u(t_n+h,\cdot)$, $\bb{u}_t(t_n+h,\cdot)$ and $\theta(t_n+h,\cdot)$. Similarly, as above, there exists
		$\hat{\boldsymbol\chi}\in\htwo$, $\hat{\bb v}\in\hom$ and $\hat\theta\in\hom$ such that
		\begin{align*}
			\boldsymbol\chi(t_n+h,\cdot)&\to \hat{\boldsymbol\chi}\textrm{ in }\hom,\\
			\boldsymbol\chi_t(t_n+h,\cdot)&\to\hat{\bb v}\textrm{ in }\ld,\\
			\theta(t_n+h,\cdot)&\to\hat{\theta}\textrm{ in }\ld.
		\end{align*}
		If necessary, we take a subsequence of $t_n$ in the above convergences. By Corollary \ref{corcontdep}, we obtain
		\begin{align*}
			(\hat{\boldsymbol\chi},\hat{\bb
				v},\hat\theta)&=\lim_{n\to\infty}(\boldsymbol\chi(t_n+h,\cdot),\boldsymbol\chi(t_n+h,\cdot),\theta(t_n+h,\cdot))=\lim_{n\to\infty}S(h)(\boldsymbol\chi(t_n,\cdot),\boldsymbol\chi_t(t_n,\cdot),\theta(t_n,\cdot))\\
			&=S(h)(\tilde{\boldsymbol \chi},\bb v_\infty,\theta_{\infty})=(\bar{\boldsymbol \chi}(h,\cdot),\bar{\boldsymbol \chi}_t(h,\cdot),\bar\theta(h,\cdot)).
		\end{align*}
		
		On the other hand, by \eqref{ogrzprop}, we know that a function
		\begin{align*}
			t\mapsto\io\log\theta(t,x)dx
		\end{align*}
		is non-decreasing. In view of Lemma \ref{upbofortemp}, it is bounded from above. Hence, it is convergent. Thus, sequences $\io\log\theta(t_n,x)dx$ and $\io\log\theta(t_n+h,x)dx$ must be convergent to the same number. Moreover,
		\begin{align}\label{piec}
			\nonumber\io\log\theta_{\infty} dx&=\lim_{n\to\infty}\io\log\theta(t_n,x)dx=\lim_{n\to\infty}\io\log\theta(t_n+h,x)dx\\
			&=\io\log\bar\theta(h,x)dx.
		\end{align}
		Indeed, $\theta(t_n,x)$ is convergent in $L^2$, so also a.e.\ (up to a subsequence). Next, we use Lemma \ref{thlowbotemp} to infer that both
		$\bar{\theta}(h,\cdot)$ and $\theta_{\infty}$ are bounded away from $0$. Lemma \ref{upbofortemp} ensures that $\log\theta$ is also bounded from above.  Hence, we conclude \eqref{piec} via the Lebesgue theorem.
		
		Next, we notice that \eqref{piec} states that
		\begin{align*}
			t\mapsto\io\log\bar\theta(t,x)dx
		\end{align*}
		is constant. The second principle of thermodynamics has just told us that the integral of entropy is constant at the $\omega$-limit set. Further, we
		make use of the knowledge of the dissipation rate of the entropy in \eqref{ogrzprop} to obtain
		\begin{align*}
			0=\ddt\io\log\bar\theta dx=\io\frac{|\nabla \bar\theta|^2}{\bar\theta^2}dx.
		\end{align*}
		Hence,
		\[
		\nabla \left(\log{\bar{\theta}}\right)=0\;\;\mbox{in}\;\;\tn
		\]
		for any $t>0$. So, for each fixed $t>0$, $\bar{\theta}$ is constant in space.
		But then, applying \eqref{ogrzprop} again, we obtain
		\begin{align*}
			0=\ddt\io\log\bar\theta dx=\ddt\log\bar\theta =\frac{\bar\theta_t}{\bar\theta}.
		\end{align*}
		It means that $\bar\theta$ is also constant in time. So, $\bar{\theta}(t,x)$ is constant in space and time.
		
		So far, we identified all the trajectories of \eqref{new:system} starting from functions $\theta_\infty$. We observe now that, actually, the $\omega$-limit set of a solution of \eqref{new:system} contains only one
		temperature $\theta_{\infty}$. To this end, we utilize the second principle of thermodynamics once more. Assume there are two different functions
		$\theta_{\infty}^1$ and $\theta_{\infty}^2$, which are limits of $\theta(t_{n_k},\cdot)$ and $\theta(t_{n_l},\cdot)$, respectively, for different time sequences
		$t_{n_k}, t_{n_l}\rightarrow \infty$. Owing to  \eqref{ogrzprop}, and following an analogous argument as above, we obtain:
		\[
		\io \log \left(\theta_{\infty}^1\right)dx =\io \log \left(\theta_{\infty}^2\right)dx.
		\]
		Moreover, we already know that solutions starting from $\theta_{\infty}^1$ and $\theta_{\infty}^2$ are constant in space and time. Hence
		\[
		\log (\theta_{\infty}^1)=\log (\theta_{\infty}^2)\Longrightarrow \theta_{\infty}^1=\theta_{\infty}^2.
		\]
		Thus, we have just shown that the $\tn$-limit set of a solution to \eqref{new:system} contains a single function, which is constant. Next, we identify all potential limits of the curl-free part of the displacement.
		
		Since $\bar{\theta}$ is constant, we have
		\begin{align}\label{szesc}
			\begin{cases}
				\bar{\boldsymbol\chi}_{tt}-\Delta\bar{\boldsymbol\chi}=0,\\
				0=\mu\bar \theta\div\bar{\boldsymbol\chi}_{t}.
			\end{cases}
		\end{align}
		
		The second equation in \eqref{szesc} show that $\Delta\phi_t=0$ for all $(t,x)\in[0,\infty)\times\tn$. Therefore, the function $\phi_t$ is a harmonic function on $\tn$. Due to the periodicity and the maximum principle, this is only possible if $\phi_t$ is a constant. Hence, $\bar{\boldsymbol\chi}_t=\nabla\phi_t=0$ in $\tn$ and so $\bar{\chi}$ is constant
		in time for each $x\in\tn$. This, together with the first equation in \eqref{szesc}, states that $\Delta\bar{\boldsymbol\chi}=0$, and this implies that $\bar{\boldsymbol\chi}$ is constant. However, we have
		$$\io\bar{\boldsymbol\chi} dx=\io\nabla\phi dx=0,$$
		so $\bar{\boldsymbol\chi}=0$.
		Finally, the conservation of the energy as in Lemma $\ref{uni:est}$ holds for the pair $(\boldsymbol\chi,\theta)$ as well
		\begin{align*}
			\frac12&\io \tilde{\boldsymbol\chi}_0^2dx+\frac12\io |\nabla\boldsymbol\chi_0|^2dx+\io\theta_0 dx\\
			&=\lim_{n\to\infty}\left(\frac12\io \boldsymbol\chi_t^2(t_n,x)dx+\frac12\io |\nabla\boldsymbol\chi(t_n,x)|^2dx+\io\theta(t_n,x) dx\right)\\
			&=\io\theta_{\infty} dx=\theta_{\infty}.
			\qedhere
		\end{align*}
	\end{proof}
	
	\section{The Lam\'e operator}\label{lamsec}
	
	In this section, we shall deal with the following system
	\begin{equation}\label{eqlame}
		\begin{cases}
			\bb{u}_{tt}-(2\zeta+\lambda)\nabla\div\bb{u}+\zeta\curl\curl \bb u =   -\mu\nabla\theta ,& \quad \text{ in } (0,T)\times \tn,\\
			\theta_t - \Delta \theta =- \mu\theta \div\bb{u}_t,& \quad \text{ in }  (0,T)\times\tn,\\
			\bb{u}(0,\cdot)=\bb{u}_0,~ \bb{u}_t(0,\cdot) = \bb{v}_0,~ \theta(0,\cdot)=\theta_0>0.&
		\end{cases}
	\end{equation}
	We consider here the operator $Lu:=-(2\zeta+\lambda)\nabla\div\bb{u}+\zeta\curl\curl \bb u$ instead of $-\Delta\bb u$. The operator is called the Lam\'e operator (see \cite{Racke_book, Slaughter}). The constants $\zeta,\lambda\in\r$, the so-called the Lam\'e moduli, are given. They satisfy
	\begin{align*}
		\zeta>0 \ant 2\zeta+d\lambda>0.
	\end{align*}
	Let us note that in the 2D case the $\curl\curl \bb u$ is understood as $\nabla^{\perp}\curl\bb u$, where $\curl \bb u:=u^2_{x_1}-u^1_{x_2}$ and $\nabla^{\perp} f:=(f_{x_2},-f_{x_1})$.  	
	
	Approximate solutions are constructed exactly the same way as in the case of the Laplacian. Basis in the half-Galerkin procedure consists again of the eigenfunctions of the Laplacian. Next, the energy and functional $\F$ look in the similar way. Below, in the following lemmata, we calculate these quantities.
	\begin{lem}
		Let $(\bb{u}, \theta)$ with $\theta > 0$ be a smooth solution of \eqref{eqlame}. Then, we have the following total energy balance:
		\begin{align}\label{lamen}
			\begin{split}
				\ul\io \bb{u}_t^2(t)dx&+\frac{2\zeta+\lambda}2\io (\div \bb{u})^2(t)dx+\frac\zeta2\io|\curl \bb u|^2(t)dx+\io\theta(t) dx\\
				&=\ul\io \bb{v}_0^2dx+\frac{2\zeta+\lambda}2\io (\div \bb{u_0})^2dx+\frac\zeta2\io|\curl \bb u_0|^2dx+\io\theta_0 dx.
			\end{split}
		\end{align}
	\end{lem}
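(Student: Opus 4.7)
The plan is to mimic the proof of Lemma~\ref{uni:est}, with the only nontrivial adjustment being the integration by parts for the Lam\'e operator. I would test the momentum equation $\eqref{eqlame}_1$ with $\bb{u}_t$ and integrate over $\tn$. The inertial term yields $\tfrac12\ddt\io \bb{u}_t^2 dx$ immediately. For the $\nabla\div$ term, integration by parts on the torus (no boundary contributions) gives
\[
-(2\zeta+\lambda)\io \nabla\div\bb{u}\cdot\bb{u}_t\,dx = (2\zeta+\lambda)\io \div\bb{u}\,\div\bb{u}_t\,dx = \frac{2\zeta+\lambda}2\ddt\io(\div\bb{u})^2\,dx.
\]
For the rotational term, I would use that $\curl$ is self-adjoint with respect to the $L^2$ pairing on $\tn$ (both in $d=3$ and in the $2$D interpretation $\curl\curl\bb{u}=\nabla^\perp\curl\bb{u}$, where $(\nabla^\perp f,\bb{w})_{L^2}=(f,\curl\bb w)_{L^2}$), which gives
\[
\zeta\io \curl\curl\bb{u}\cdot\bb{u}_t\,dx = \zeta\io \curl\bb{u}\cdot\curl\bb{u}_t\,dx = \frac{\zeta}{2}\ddt\io|\curl\bb{u}|^2\,dx.
\]

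For the right-hand side of the momentum equation, another integration by parts yields $-\mu\io\nabla\theta\cdot\bb{u}_t\,dx=\mu\io\theta\,\div\bb{u}_t\,dx$. I would then integrate the heat equation $\eqref{eqlame}_2$ over $\tn$; since $\io\Delta\theta\,dx=0$, this produces $\ddt\io\theta\,dx=-\mu\io\theta\,\div\bb{u}_t\,dx$. Adding the two identities causes the $\mu\io\theta\,\div\bb{u}_t\,dx$ contributions to cancel exactly, producing the pointwise-in-time differential identity
\[
\ddt\left(\ul\io \bb{u}_t^2\,dx + \frac{2\zeta+\lambda}2\io(\div\bb{u})^2\,dx + \frac{\zeta}2\io|\curl\bb{u}|^2\,dx + \io\theta\,dx\right)=0.
\]
Integrating in time from $0$ to $t$ then gives \eqref{lamen}.

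The only delicate point is the self-adjointness of $\curl$ in the $2$D convention, since $\curl$ is scalar-valued while $\curl\curl$ is vector-valued; this is purely a bookkeeping matter that one verifies by direct integration by parts component-wise. All boundary terms are automatically absent thanks to the periodic setting of $\tn$, and the smoothness assumption on $(\bb{u},\theta)$ makes every step above rigorous, so no genuine analytic obstacle arises.
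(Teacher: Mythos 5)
Your proposal is correct and follows essentially the same route as the paper's proof: multiply the momentum equation by $\bb{u}_t$, integrate by parts on $\tn$ (including the $\curl$ duality in both the 2D and 3D conventions), integrate the heat equation over the torus, and add so that the coupling terms $\pm\mu\io\theta\div\bb{u}_t\,dx$ cancel. Your treatment of the $\curl\curl$ term is just a more explicit version of the integration by parts the paper leaves implicit.
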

	\begin{proof}
		Let us multiply the first equation in \eqref{eqlame} by $\bb u_t$ and integrate over $\tn$. Integrating by parts, we obtain
		\begin{align}\label{lameq1}
			\ddt\left(\ul\io \bb{u}_t^2(t)dx+\frac{2\zeta+\lambda}2\io (\div \bb{u})^2(t)dx+\frac\zeta2\io|\curl \bb u|^2(t)dx\right)&=-\mu\io\nabla\theta\cdot\bb u_tdx\nonumber\\
			&=\mu\io\theta\div\bb u_tdx.
		\end{align}
		Next, we integrate the lower equation in \eqref{eqlame} over $\tn$. It yields
		\begin{align*}
			\ddt\io\theta dx=-\mu\io\theta\div\bb u_tdx.
		\end{align*}
		We add the above equality to \eqref{lameq1}. It gives us the claim.
	\end{proof}
	
	The functional $\F$ is defined as follows
	\begin{align}		
		\F(\bb{u},\theta)&=\ul\left(\io |\nabla \bb{u}_t|^2dx+(2\zeta+\lambda)\io|\nabla\div\bb{u}|^2dx+\zeta\io|\curl\curl\bb u|^2dx+\io\frac{|\nabla\theta|^2}{\theta}dx\right).\label{lamfu}
	\end{align}
	It satisfies a similar formula as in Lemma \ref{wonderfulformula}.
	\begin{lem}
		Let $(\bb{u}, \theta)$ with $\theta > 0$ be a smooth solution of \eqref{eqlame}. 
		Then, the equality
		\begin{align*}
			\frac{d}{dt}\F(\bb{u},\theta)=
			-\io\theta|\nabla^2\log\theta|^2dx-\frac\mu2\io\frac{|\nabla\theta|^2}\theta\div \bb{u}_tdx
		\end{align*}
		holds.
	\end{lem}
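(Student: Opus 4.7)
The plan is to mirror the proof of Lemma~\ref{wonderfulformula}, keeping the temperature calculation completely intact and only rewriting the displacement argument so as to accommodate the Lamé operator $L$. Since the heat equation in \eqref{eqlame} coincides with the one in \eqref{new:system}, identity \eqref{wfe} carries over verbatim, and after using $4|\nabla^2\log\theta^{\sul}|^2=|\nabla^2\log\theta|^2$ it reads
$$\ul\ddt\io\frac{|\nabla\theta|^2}{\theta}dx=-\io\theta|\nabla^2\log\theta|^2dx-\frac{\mu}{2}\io\frac{|\nabla\theta|^2}{\theta}\div\bb u_t\,dx+\mu\io\Delta\theta\div\bb u_t\,dx.$$

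For the displacement contribution I would test the Lamé wave equation $\bb u_{tt}+L\bb u=-\mu\nabla\theta$ with $-\Delta\bb u_t$ and exploit the Weyl decomposition $-\Delta=\curl\curl-\nabla\div$. The key input, which I would establish by one integration by parts together with $\div\circ\curl=0$ in 3D (and the analogous $\div\circ\nabla^\perp=0$ under the scalar-curl convention from the remark preceding the lemma in 2D), is the orthogonality
$$\io\nabla\div\bb v\cdot\curl\curl\bb w\,dx=0=\io\curl\curl\bb v\cdot\nabla\div\bb w\,dx\quad\text{for all smooth }\bb v,\bb w.$$
Using these identities, the cross terms in $-\io L\bb u\cdot\Delta\bb u_t\,dx$ vanish and only the diagonal contributions survive, so that, as in \eqref{wfu1}, the wave equation produces
$$\ul\ddt\left(\io|\nabla\bb u_t|^2dx+(2\zeta+\lambda)\io|\nabla\div\bb u|^2dx+\zeta\io|\curl\curl\bb u|^2dx\right)=\mu\io\nabla\theta\cdot\Delta\bb u_t\,dx,$$
while the same orthogonality collapses the right-hand side to $-\mu\io\Delta\theta\div\bb u_t\,dx$, since only the $\nabla\div$ part of $\Delta$ pairs nontrivially with the pure gradient $\nabla\theta$.

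Adding the two displayed identities then cancels the $\mu\io\Delta\theta\div\bb u_t\,dx$ terms and produces the claimed formula for $\ddt\F(\bb u,\theta)$. I do not anticipate any genuine obstacle: the bookkeeping is strictly easier than in Lemma~\ref{wonderfulformula} because the temperature calculation is reused verbatim. The only step that deserves care is the Weyl-type orthogonality on the torus, which has to be read with the two-dimensional conventions from the remark preceding the statement; the $\div\circ\nabla^\perp=0$ identity then delivers the same cancellation in 2D as $\div\circ\curl=0$ does in 3D.
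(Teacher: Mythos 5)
Your proposal is correct and follows essentially the same route as the paper: reuse the temperature identity \eqref{feq1} verbatim, test the Lam\'e wave equation with $-\Delta\bb u_t$, and use the decomposition $-\Delta\bb w=-\nabla\div\bb w+\curl\curl\bb w$ together with the $L^2$-orthogonality of gradient and curl fields to reduce the right-hand side to $-\mu\io\Delta\theta\div\bb u_t\,dx$. The only difference is presentational — you make the Weyl-type orthogonality identities explicit, including the 2D convention $\div\nabla^\perp=0$, where the paper just says ``integrate by parts.''
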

	\begin{proof}
		First, we note that the calculations for the Fisher information in the proof of Lemma \ref{wonderfulformula} are independent of the wave equation in \eqref{new:system}. Hence, they are also valid for \eqref{eqlame}. Therefore, we have 
		\begin{align}\label{feq1}
			\ul\ddt\io\frac{|\nabla\theta|^2}{\theta}dx=-4\io\theta|\nabla^2\log\theta^{\sul}|^2dx-\frac\mu2\io\frac{|\nabla\theta|^2}\theta\div \bb{u}_tdx+\mu\io\Delta\theta\div \bb{u}_tdx.
		\end{align}
		Next, we multiply the upper equation in \eqref{eqlame} by $-\Delta\bb u_t$ and integrate over $\tn$. It results in 
		\begin{align*}
			&\io\bb u_{tt}\cdot(-\Delta\bb u_t)-(2\zeta+\lambda)\io\nabla\div\bb u\cdot(-\Delta\bb u_t)dx+\zeta\io\curl\curl\bb u\cdot(-\Delta \bb u_t)dx \\
			& \quad =-\mu\io\nabla\theta\cdot(-\Delta \bb u_t)dx.
		\end{align*}
		We utilize here the equality $-\Delta\bb w=-\nabla\div\bb w+\curl\curl\bb w$ and integrate by parts. It yields
		\begin{align*}
			\ul\ddt\left(\io|\nabla\bb u_t|^2dx+(2\zeta+\lambda)\io|\nabla\div\bb u|^2+\zeta\io|\curl\curl\bb u|^2dx\right)&=\mu\io\nabla\theta\cdot\nabla\div\bb u_tdx\\
			&=-\mu\io\Delta\theta\div\bb u_tdx.
		\end{align*}
		We add the obtained equality to \eqref{feq1} and get the thesis.
	\end{proof}
	
	We notice that the reasoning making use of the heat equation only, see for instance Lemma \ref{moser} or the second principle of thermodynamics \eqref{ogrzprop}, is still valid in the Lam\'e operator case. In the end, a slight modification of the proofs, utilizing \eqref{lamen} and \eqref{lamfu} lead to the following theorem.
	\begin{tw}[Existence, uniqueness and asymptotic behavior]
		Let $d \in \{2, 3\}$.
		\begin{enumerate}
			\item \textbf{Global solution}:
			There is $D > 0$ depending only on $\mu,d$ and $\mathbb{T}^d$ such that if the initial data satisfy
			\begin{align}\label{initcondl}
				\bb{u}_0\in \htwo,\quad \bb{v}_0\in\hom,\quad \theta_0\in\hom, \quad \log\theta_0 \in L^1(\mathbb{T}^d), \quad \nabla\sqrt{\theta_0} \in L^2(\mathbb{T}^d)
			\end{align}
			and
			\begin{eqnarray}\label{initcondl1}
				\|\bb v_0\|_{\ld}^2+(2\zeta+\lambda)\|\nabla\div\bb u_0\|_{\ld}^2+\|\curl\curl\bb u_0\|_{\ld}^2+\left\|\frac{\nabla\theta_0}{\sqrt{\theta_0}}\right\|^2_{\ld}\leq D,
			\end{eqnarray}
			then there exists a unique solution $(\bb u, \theta)$ of the following regularity
			\begin{align*}
				\bb{u}&\in L_{\loc}^{\infty}([0,\infty);\htwo)\cap W_{\loc}^{1,\infty}([0,\infty);\hom)\cap W_{\loc}^{2,\infty}([0,\infty);\ld),\nonumber\\
				\theta&\in L_{\loc}^2([0,\infty);\htwo)\cap H_{\loc}^1([0,\infty); \ld), \nonumber
			\end{align*}
			and
			\begin{align*}
				&\nabla\div \bb{u}\in L^{\infty}(0,\infty;\ld),\quad \curl\curl\bb{u}\in L^{\infty}(0,\infty;\ld), \quad\nabla\bb{u}_t\in L^{\infty}(0,\infty;\ld), \\
				&\sqrt{\theta}\in L^{\infty}(0,\infty;H^1(\mathbb{T}^d)), \quad 
				\log\theta \in L^\infty(0,\infty;L^1(\mathbb{T}^d)), \quad \nabla\log\theta\in L^{2}(0,\infty;\ld).
			\end{align*} 
			
			of \eqref{eqlame}. 
			
			\item \textbf{Local solution}:
			If the initial data satisfy \eqref{initcondl} but not necessarily \eqref{initcondl1},
			then there exists $T \in (0, \infty)$ depending on $\mu, d, \zeta, \lambda$ and $\mathbb{T}^d$ and a unique solution $(\bb u, \theta)$ of \eqref{eqlame} on $(0, T)$.
			\item \textbf{Long-time dynamics:}
			Let us assume that $(\bb u, \theta)$ is a solution of \eqref{eqlame}
			starting from initial data $\bb u_0$, $\bb v_0$ and $\theta_0>0$, satisfying \eqref{initcondl} and $\theta_0^{-1}\in L^{\infty}(\tn)$.
			Then, the following
			convergences hold:
			\begin{alignat*}{2}
				\boldsymbol\chi(t,\cdot)&\to0 &&\quad\textrm{in }\hom \textrm{ as }t\to\infty,\\
				\boldsymbol\chi_t(t,\cdot)&\to 0 &&\quad\textrm{in }\ld \textrm{ as }t\to\infty,\\
				\theta(t,\cdot)&\to \theta_{\infty} &&\quad\textrm{in }\ld \textrm{ as }t\to\infty,
			\end{alignat*}
			where $\theta_{\infty}:=\left(\ul\io \tilde{\boldsymbol\chi}_0^2dx+\frac{2\zeta+\lambda}2\io(\div\boldsymbol\chi_0)^2dx+\io\theta_0 dx\right)$ and $\boldsymbol\chi$ is a curl-free part of $\bb u$, $\boldsymbol\chi_0$ is a curl-free part of $\bb u_0$ and  $\tilde{\boldsymbol\chi}_0$ is a curl-free part of $\bb v_0$.
		\end{enumerate}
	\end{tw}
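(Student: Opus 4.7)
The proof proposal is to transplant the arguments from the Laplacian case, relying on two structural facts. First, the heat equation in \eqref{eqlame} is identical to that in \eqref{new:system}, so all estimates depending solely on parabolic theory carry over unchanged; this includes the entropy identity \eqref{ogrzprop}, the Moser iteration of Lemma \ref{moser} (yielding both the upper and lower bounds for $\theta$ via Lemmas \ref{upbofortemp} and \ref{thlowbotemp}), and the strict positivity of $\theta_0^{-1}$ propagation. Second, the identity $-\Delta\bb w=-\nabla\div\bb w+\curl\curl\bb w$ and orthogonality of gradients and curls in $\ld$ imply
\begin{equation*}
\io |\Delta\bb u|^2 dx = \io |\nabla\div\bb u|^2 dx + \io |\curl\curl\bb u|^2dx,
\end{equation*}
so the control provided by the Lam\'e-version of $\F$ in \eqref{lamfu} yields the same $L^\infty_tL^2_x$ bound on $\Delta\bb u$ that was crucial in the Laplacian case.

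For the global and local existence parts, I would reproduce the half-Galerkin construction of Proposition \ref{aproxex} verbatim, using the same scalar eigenbasis of $-\Delta$ to approximate vector fields. The a priori inequality of Lemma \ref{glestlem} is already established (the two lemmas preceding the theorem show that $\ddt\F$ has exactly the same right-hand side), so Corollary \ref{est:sol} and Lemma \ref{loc:est} transfer literally. The maximal-regularity arguments of Lemma \ref{glob:est} then go through once one checks that $\bb u_{n,tt}=-L\bb u_n-\mu P_{V_n}\nabla\theta_n$ is controlled, which follows because $L\bb u_n$ is bounded in $\ld$ by $\|\nabla\div\bb u_n\|_{\ld}+\|\curl\curl\bb u_n\|_{\ld}$. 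Passing to the limit follows Lemma \ref{local:est} with no changes.

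For uniqueness, apply $\div$ to the difference equation. Since $\div L \bb u = -(2\zeta+\lambda)\Delta\div\bb u$, setting $m=\div(\bb u_1-\bb u_2)$ yields
\begin{equation*}
m_{tt}-(2\zeta+\lambda)\Delta m = -\mu\Delta\theta,
\end{equation*}
and the remainder of the proof of the uniqueness lemma applies verbatim (the coefficient $(2\zeta+\lambda)>0$ only changes constants in the Gronwall step).

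For the long-time dynamics, apply the Helmholtz decomposition as in Theorem \ref{twdecompsol}. The divergence-free part $\boldsymbol\nu=H\bb u$ satisfies $\boldsymbol\nu_{tt}+\zeta\curl\curl\boldsymbol\nu=0$, i.e.\ the free wave equation $\boldsymbol\nu_{tt}-\zeta\Delta\boldsymbol\nu=0$, which is decoupled from $\theta$ and oscillates. The curl-free part $\boldsymbol\chi=\hp\bb u$ satisfies $\boldsymbol\chi_{tt}-(2\zeta+\lambda)\Delta\boldsymbol\chi=-\mu\nabla\theta$ coupled to the heat equation via $\div\bb u=\div\boldsymbol\chi$. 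Since all the asymptotic arguments in Section \ref{czwarta} rely only on (i) the entropy identity \eqref{ogrzprop}, (ii) the uniform $L^\infty$ bounds on $\theta$ and $\theta^{-1}$, and (iii) the Aubin--Lions-type compactness coming from the global estimates of Lemma \ref{glob:est}, they transport unchanged. The continuous dependence Proposition \ref{twcontdep} likewise adapts: multiplying the difference equation by $\bb u_t$ still produces the same energy identity up to the positive Lam\'e-weighted norm of $\nabla\bb u$. The identification of the $\omega$-limit proceeds exactly as before, noting that on the limiting trajectory the heat equation forces $\bar\theta$ constant, hence $\div\bar{\boldsymbol\chi}_t=0$, hence (combined with $\int\bar{\boldsymbol\chi}=0$ and the curl-free constraint) $\bar{\boldsymbol\chi}\equiv 0$. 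The value of $\theta_\infty$ comes from the modified energy balance \eqref{lamen} applied to the curl-free component.

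The main obstacle I anticipate is bookkeeping rather than any genuinely new difficulty: verifying that the two quadratic forms $(2\zeta+\lambda)\|\nabla\div\bb u\|_{\ld}^2+\zeta\|\curl\curl\bb u\|_{\ld}^2$ and $\|\Delta\bb u\|_{\ld}^2$ control each other uniformly (which requires $\zeta>0$ and $2\zeta+\lambda>0$, exactly the assumed Lam\'e inequalities), and that every integration by parts used in Lemma \ref{wonderfulformula} reproduces the same cancellation --- in particular that the cross term $\mu\io\nabla\theta\cdot\curl\curl\bb u_t dx$ vanishes, which it does since $\curl\nabla\theta=0$. Once these two checks are made, every subsequent step is a transcription.
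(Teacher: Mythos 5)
Your proposal is correct and follows essentially the same route as the paper, which itself only proves the modified energy balance \eqref{lamen} and the Lam\'e version of the functional identity (with the identical right-hand side as in Lemma \ref{wonderfulformula}) and then declares the remaining steps a ``slight modification'' of the Laplacian case. The specific checks you single out --- the orthogonal splitting $\|\Delta\bb u\|_{\ld}^2=\|\nabla\div\bb u\|_{\ld}^2+\|\curl\curl\bb u\|_{\ld}^2$, the equivalence of the Lam\'e-weighted quadratic form with $\|\Delta \bb u\|_{\ld}^2$ under $\zeta>0$, $2\zeta+d\lambda>0$, the vanishing of $\io\nabla\theta\cdot\curl\curl\bb u_t\,dx$, the reduction of uniqueness to $m_{tt}-(2\zeta+\lambda)\Delta m=-\mu\Delta\theta$, and the two wave speeds in the Helmholtz decomposition --- are exactly the points where the transcription needs verification, so your write-up is, if anything, more explicit than the paper's.
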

	
	\noindent
	\textbf{Acknowledgment}.
	BM was supported by the Croatian Science Foundation under project number IP-2022-10-2962. ST gratefully acknowledges the financial support of the Ministry of Science, Technological Development and Innovation of the Republic of Serbia (Grants No. 451-03-137/2025-03/200125 $\&$ 451-03-136/2025-03/200125). The other authors have nothing to declare.

\end{document}